\numberwithin{equation}{section}
\theoremstyle{plain}
\newtheorem{theorem}{Theorem}[section]
\newtheorem{lemma}[theorem]{Lemma}
\newtheorem{proposition}[theorem]{Proposition}
\newtheorem{corollary}[theorem]{Corollary}
\theoremstyle{definition}
\newtheorem{definition}[theorem]{Definition}
\newtheorem{notation}[theorem]{Notation}
\newtheorem{routine}[theorem]{Routine}
\newtheorem{claim}[theorem]{Claim}
\newtheorem{remark}[theorem]{Remark}
\newtheorem{question}[theorem]{Question}
\let\c@equation\c@theorem  
\newcommand{\mb}{\mathbb}
\newcommand{\NN}{\mb N}
\newcommand{\PP}{\mb P}
\newcommand{\ZZ}{\mb Z}
\newcommand{\kk}{{\Bbbk}}
\newcommand{\mf}{\mathfrak}
\newcommand{\ssm}{\smallsetminus}
\newcommand{\wt}{\widetilde}
\newcommand{\ang}[1]{\langle #1 \rangle}
\newcommand{\wh}{\widehat}
\newcommand{\DMO}{\DeclareMathOperator}
\DMO{\Aut}{Aut}
\DMO{\Div}{div}
\DMO{\ev}{ev}
\DMO{\Bs}{Bs}
\DMO{\GKdim}{GKdim}
\DMO{\Pic}{Pic}
\DMO{\Num}{Num}
\DMO{\Cl}{Cl}
\DMO{\CaCl}{CaCl}
\DMO{\im}{im}
\DMO{\hilb}{hilb}
\newcommand{\lp}{\textnormal{(}}
\newcommand{\rp}{\textnormal{)}}
\providecommand{\abs}[1]{\lvert#1\rvert}
\newcommand{\beq}{\begin{equation}}
\newcommand{\eeq}{\end{equation}}
\DMO{\ad}{ad}
\begin{document}

\title[Maps from the enveloping algebra of the positive Witt algebra]
{Maps from the enveloping algebra of the positive Witt algebra to regular algebras}

\author{Susan J. Sierra and Chelsea Walton}

\address{Sierra: School of Mathematics, The University of Edinburgh, Edinburgh EH9 3FD, United Kingdom}

\email{s.sierra@ed.ac.uk}

\address{Walton: Department of Mathematics, Temple University, Philadelphia, Pennsylvania 19122,
USA}

\email{notlaw@temple.edu}

\bibliographystyle{alpha}       

\begin{abstract} We construct homomorphisms from the universal enveloping algebra of the positive (part of the) Witt algebra to several different Artin-Schelter regular algebras, and determine their kernels and images. As a result, we produce elementary proofs that the universal enveloping algebras of the Virasoro algebra, the Witt algebra, and the positive Witt algebra are neither left nor right noetherian. 
\end{abstract}

\subjclass[2010]{14A22, 16S30, 16S38, 17B68}

\keywords{Artin-Schelter regular, Jordan plane, non-noetherian, universal enveloping algebra, Witt algebra}

\maketitle


\setcounter{section}{-1}


\section{Introduction}\label{INTRO2}

Let $\kk$ be a  field of characteristic 0.
All vector spaces,  algebras, $\otimes$ are over $\kk$, unless stated otherwise. In this work, we construct and study homomorphisms from the universal enveloping algebra of the positive part of the Witt algebra to {\it Artin-Schelter} ({\it AS-}){\it regular algebras}. The latter serve as homological analogues of commutative polynomial rings in the field of noncommutative algebraic geometry. 

To begin,  consider the Lie algebras below.

\begin{definition}[$V$, $W$, $W_+$]  \label{idef:Liealgs} 
We define the following Lie algebras:  
\begin{enumerate}
\item The {\it Virasoro algebra} is defined to be the Lie algebra $V$ with basis $\{e_n\}_{n \in \mathbb{Z}} \cup \{c\}$ and Lie bracket $[e_n,c]= 0$, $[e_n, e_m] = (m-n)e_{n+m} + \frac{c}{12}(m^3 - m) \delta_{n+m,0}$. 
\smallskip

\item The {\it Witt} (or {\it centerless Virasoro}) {\it algebra} is defined to be the Lie algebra $W$ with basis $\{e_n\}_{n \in \mathbb{Z}}$ and Lie bracket $[e_n, e_m] = (m-n)e_{n+m}$. 
\smallskip

\item The {\em  positive \textnormal{(}part of the\textnormal{)} Witt algebra}  is defined to be the Lie subalgebra $W_+$ of $W$ generated by $\{e_n\}_{n \geq 1}$.
\end{enumerate}
For any Lie algebra $\mf{g}$, we denote its universal enveloping algebra by $U(\mf g)$.
\end{definition}

Further, consider the following algebras.

\begin{notation}[$S$, $R$] \label{inot0} 
Let $S$ be the algebra generated by $u,v,w$, subject to the relations
$$uv-vu-v^2 ~=~ uw-wu-vw ~=~vw-wv ~=~ 0.$$
Let $R$ be the {\it Jordan plane}, generated by $u,v$, subject to the relation
$uv-vu-v^2= 0.$
\end{notation}
It is well-known that  $R$ is an AS-regular algebra of global dimension 2. Moreover, we see by Lemma~\ref{Zhang} that $S$ is also AS-regular, of global dimension 3. 

This work focuses on  maps that we construct from  the enveloping algebra $U(W_+)$ to both $R$ and $S$, given as follows:  

\begin{definition}[$\phi$, $\lambda_a$] \label{idef:maps}
Let
$ \phi: U(W_+) \to S $ be the algebra homomorphism induced by defining 
\beq\label{phi} \phi(e_n) = (u-(n-1)w)v^{n-1}.\eeq
For $a\in \kk$, let $\lambda_a: U(W_+) \to R$ be the algebra homomorphism induced by defining \beq \label{lambda} \lambda_a(e_n) = (u-(n-1)av)v^{n-1}.\eeq

\noindent That $\phi$ and $\lambda_a$ are well-defined is Lemma~\ref{lem:homom}.\end{definition}

Our main result is that we understand fully the kernels and images of the maps above, as presented below.

\begin{theorem} \label{ithm2} We have the following statements about the kernels and images of the maps $\phi$ and $\lambda_a$. 
\begin{enumerate}
\item[(a)] \textnormal{[Propositions~\ref{prop:(d)}, \ref{prop:(d)fora}]} $\ker \lambda_a$ is equal to  the ideal $\lp e_1 e_3-e_2^2-e_4 \rp$ if $a =0,1$; or is an
 ideal generated by one element of degree 5 and two elements of degree 6 \lp listed in Proposition~\ref{prop:(d)fora}\rp ~if $a \neq 0,1$.
\item[(b)] \textnormal{[Proposition~\ref{prop:(c)}]} $\lambda_a(U(W_+))$ is equal to $\kk + u R$ 
if $a =0$; ~is equal to $\kk + Ru$ 
if $a =1$;~ or  contains $R_{\geq 4}$ if $a \neq 0,1$.  
For all $a$, the image of $\lambda_a$ is noetherian.
\item[(c)] \textnormal{[Theorem~\ref{thm:(b)}]} $ \ker \phi$ is equal to  $(e_1e_5 - 4e_2 e_4+ 3e_3^2+2e_6)$.
\end{enumerate}
\end{theorem}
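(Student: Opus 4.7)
The plan is to establish the two inclusions separately, with the easy direction $(f) \subseteq \ker \phi$ handled by direct computation and the reverse $\ker \phi \subseteq (f)$ by a Hilbert series comparison. For the easy direction, I would work in the variables $(v, y, u)$ with $y := v - w$; then $y$ commutes with $v$, $[u,y] = vy$, and $S$ is the Ore extension $\kk[v,y][u;\delta]$ with $\delta(v) = v^2$, $\delta(y) = vy$. In these coordinates $\phi(e_n) = v^{n-1}(u + (n-1)y)$, and a routine expansion gives
\[
\phi(e_m)\phi(e_n) = v^{m+n-2} u^2 + (m+n-2) v^{m+n-2} yu + (n-1) v^{m+n-1} u + (m-1)(n-1) v^{m+n-2} y^2 + n(n-1) v^{m+n-1} y.
\]
Evaluating for $(m,n) \in \{(1,5), (2,4), (3,3)\}$ and combining with $\phi(e_6) = v^5 u + 5 v^5 y$, each of the five coefficient slots (the $u^2$, $yu$, $u$, $y^2$, and $y$ parts) vanishes under the scalars $(1,-4,3,2)$, so $f := e_1 e_5 - 4 e_2 e_4 + 3 e_3^2 + 2 e_6$ lies in $\ker \phi$.

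For the reverse inclusion, the direct inclusion yields a surjection $U(W_+)/(f) \twoheadrightarrow T := \phi(U(W_+))$, so it suffices to prove $H(U(W_+)/(f), t) = H(T, t)$. The key structural input is the family picture for $\phi$: writing $S = R[y;\sigma]$ as a skew polynomial ring with $\sigma \in \Aut R$ determined by $\sigma(u) = u-v$, $\sigma(v) = v$, one expands $\phi(x) = \sum_{k\geq 0} P_k(x) y^k$ for $\kk$-linear maps $P_k : U(W_+) \to R$. Setting $y = (1-a)v$ recovers $\lambda_a$, and since $\kk$ is infinite this yields $\ker \phi = \bigcap_{a \in \kk} \ker \lambda_a \subseteq \ker \lambda_1 = (g)$ with $g = e_1 e_3 - e_2^2 - e_4$ by part~(a).

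To obtain $H(T, t)$ I would use the $y$-adic filtration of $T$: the zeroth quotient $T/(T \cap yS)$ embeds in $R$ as $\lambda_1(U(W_+)) = \kk + Ru$, of Hilbert series $1 + t/(1-t)^2$ by part~(b); the successive quotients $T \cap y^k S / T \cap y^{k+1} S$ are controlled by the twisted Leibniz rule $P_k(xy) = \sum_{i+j=k} P_i(x)\sigma^i(P_j(y))$ inherited from the skew product, and should be describable as explicit graded-shifted subspaces of $R$. The main technical obstacle is showing this filtration closes up cleanly, i.e., that no new relations arise beyond $f$; once $H(T, t)$ is established, a parallel direct analysis of the graded ideal $(f)_n = \sum_{a + b = n - 6} U_a f U_b$ in the PBW basis verifies $H(U(W_+)/(f), t) = H(T, t)$, completing the proof.
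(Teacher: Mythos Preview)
Your verification that $\phi(f)=0$ is fine; the paper does the same check, just in the Zhang-twist coordinates rather than your $(v,y,u)$ Ore picture.

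The hard direction, however, has a genuine gap. Your plan is a Hilbert-series squeeze: compute $H(T,t)$ for $T=\phi(U(W_+))$, compute $H(U(W_+)/(f),t)$, and match. The first of these is feasible --- the paper in fact carries it out (obtaining $H(T,t)=(1-t+t^3)/((1-t)^2(1-t^2))$), though via the map $\eta_0:w\mapsto 0$ and the identification $\ker\eta_0=Qp$, rather than your $y$-adic filtration. But the second step is the entire difficulty, and your sketch (``a parallel direct analysis of the graded ideal $(f)_n=\sum_{a+b=n-6}U_afU_b$ in the PBW basis'') is not an argument. Controlling the overlaps among the spaces $U_afU_b$ inside a noncommutative enveloping algebra is hard in general: $f$ is not a leading term for any evident monomial order, there is no Gr\"obner-type reduction available, and the PBW basis of $U(W_+)$ gives no direct leverage on the two-sided ideal $f$ generates. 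Without a concrete mechanism here you are simply asserting what has to be proved.

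The paper sidesteps this completely by working on the image side. Rather than compute $H(U(W_+)/(f),t)$, it computes a \emph{presentation} of $B=T$ as a quotient of the free algebra $\kk\langle t_1,t_2\rangle$: the relations are read off from generators of the right-$B$-syzygy module $M=uB\cap(uv-vw)B$ via the standard correspondence recorded in Proposition~\ref{propappendix}. A Hilbert-series squeeze --- carried out entirely inside $S$ and $Q$, where normal elements ($p$, $v$, $vw$) and explicit module structures are available --- shows that $M$ is generated by three specific elements $b_5,b_6,b_7$ of degrees $5,6,7$. The corresponding relations $q_5,q_6,q_7$ then generate $\ker\pi_B$. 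The punch line is that $q_5$ and $q_7$ turn out to be precisely the two defining relations of $U(W_+)$ from Lemma~\ref{lem:present}(a), so $\ker\phi=(\pi(q_6))$, and since $g$ is a nonzero degree-$6$ element of $\ker\phi$ it must be a scalar multiple of $\pi(q_6)$. All the delicate counting happens in $Q$ and $B$, never in $U(W_+)$ itself.
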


\noindent The image of  $\phi$ will be discussed later in the introduction, after Theorem~\ref{ithm4}.

The result above has  a surprising application.   
In \cite[Theorem~0.5 and Corollary~0.6]{SieWal:Witt}, the authors established that  $U(W_+)$, $U(W)$, $U(V)$ are neither left nor right noetherian through relatively indirect means, using the techniques  of \cite{S-surfclass}.  
In particular, we were not able to give an example of a non-finitely-generated right or left ideal in any of these enveloping algebras. 
However, in the course of proving Theorem~\ref{ithm2}, we produce an elementary and constructive proof of \cite[Theorem~0.5 and Corollary~0.6]{SieWal:Witt}. Namely, we obtain: 

\begin{theorem}[Proposition~\ref{prop:(d)}, Theorem~\ref{thm:easy}] \label{ithm1}
The ideal $\ker \lambda_0= \ker \lambda_1= ( e_1 e_3-e_2^2-e_4 )$   is not finitely generated as either a left or a right ideal of $U(W_+)$. 
\end{theorem}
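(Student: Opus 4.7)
The plan is to reduce the non-finite-generation question to a problem about a commutative polynomial ring via associated graded. Equip $U(W_+)$ with the length filtration $F_k$, the span of products of at most $k$ of the $e_n$'s, so that $\mathrm{gr}^F U(W_+) = \mathrm{Sym}(W_+) = \kk[e_1, e_2, \ldots]$. Filter $R$ by $u$-degree; the relation $uv=vu+v^2$ then forces $\mathrm{gr}^u R = \kk[u,v]$ as a commutative polynomial ring, and the image $\kk + uR$ of $\lambda_0$ degenerates to $\kk + u\kk[u,v]$. Since $\lambda_0(F_k U(W_+))$ sits in the $u$-degree-$\le k$ part of $\kk+uR$, we obtain an induced graded map
\[
\mathrm{gr}\,\lambda_0 \colon \kk[e_1, e_2, \ldots] \longrightarrow \kk[u,v], \qquad e_n \longmapsto uv^{n-1}.
\]
A Hilbert series computation shows $\mathrm{gr}(\ker\lambda_0) = J := \ker(\mathrm{gr}\,\lambda_0)$: both have internal-degree-$n$ component of dimension $p(n) - n$ for $n \geq 1$, where $p$ is the partition function.

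The next step is the standard transport: if $\ker\lambda_0 = \sum_{i=1}^m U(W_+)\,x_i$ with $x_i \in F_{K_i}\setminus F_{K_i-1}$, then the leading symbols $\mathrm{gr}(x_i)\in\mathrm{Sym}(W_+)$ generate $J$ as an ideal. Given $y\in F_k\cap \ker\lambda_0$ with expression $y=\sum a_i x_i$ of maximal filtration $M=\max_i(F\text{-level}(a_i)+K_i)$, if $M>k$ then the length-$M$ terms satisfy a syzygy $\sum_{F(a_i)+K_i=M}\mathrm{gr}(a_i)\,\mathrm{gr}(x_i)=0$ in $\mathrm{Sym}(W_+)$; lifting each $\mathrm{gr}(a_i)$ back into $U(W_+)$ and subtracting the corresponding contribution iteratively reduces $M$ to $k$, whence $\mathrm{gr}(y)\in(\mathrm{gr}(x_i))$. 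Because $\mathrm{Sym}(W_+)$ is commutative, the same argument applies verbatim to right ideals, so both halves of the theorem reduce to showing that $J$ is \emph{not} finitely generated as an ideal of $\kk[e_1,e_2,\ldots]$.

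For that final step I would use the bigrading by (internal degree, length), in which $e_n$ is bihomogeneous of bidegree $(n,1)$. In the length-$2$, internal-degree-$n$ piece of $\kk[e_1,e_2,\ldots]$, the $\lfloor n/2\rfloor$ monomials $e_ie_{n-i}$ (for $1\le i\le\lfloor n/2\rfloor$) all map to the single element $u^2v^{n-2}$, so this piece of $J$ has dimension $\lfloor n/2\rfloor - 1\to\infty$. On the other hand, if $J$ were generated by bihomogeneous $f_1,\ldots,f_m$ of bidegrees $(d_i,K_i)$, then since no $e_n$ lies in $J$ we have $K_i\ge 2$, and because length is additive under multiplication the length-$2$, degree-$n$ piece of $(f_1,\ldots,f_m)$ is $\bigoplus_{K_i=2,\,d_i=n}\kk f_i$, vanishing once $n>\max_i d_i$. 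The resulting contradiction proves the theorem. The main obstacle is the transport step: one must carefully lift leading symbols and iterate the reduction, using that $\mathrm{Sym}(W_+)$ is a domain so that no $\mathrm{gr}(a_i)\,\mathrm{gr}(x_i)$ accidentally vanishes. Everything else is a clean bigraded dimension count in a commutative polynomial ring.
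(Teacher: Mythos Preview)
Your associated-graded strategy is appealing, and the final bigraded count in $\kk[e_1, e_2, \ldots]$ is correct: the ideal $J = \ker(\mathrm{gr}\,\lambda_0)$ is indeed not finitely generated, and the Hilbert-series identification $\mathrm{gr}(\ker\lambda_0) = J$ also goes through. The problem is the transport step, and it is not a matter of care: the implication ``$\ker\lambda_0$ finitely generated as a left ideal $\Rightarrow$ $\mathrm{gr}(\ker\lambda_0)$ finitely generated'' is false in general for filtered rings with non-noetherian associated graded, and your reduction argument does not establish it.

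Concretely: you observe that if $M > k$ then the top symbols satisfy a syzygy $\sum \overline{a}_i\, \overline{x}_i = 0$, and propose to ``lift and subtract'' to lower $M$. But lifting a syzygy means finding $c_i \in U(W_+)$ with $\overline{c}_i = \overline{a}_i$ and $\sum c_i x_i = 0$ in $U(W_+)$, and there is no reason such $c_i$ exist. What you actually know is only that $\sum_{i \in S} a_i x_i \in F_{M-1} \cap \ker\lambda_0$; to continue you would need to re-express \emph{this} element with coefficients in $F_{M-1-K_i}$, which is exactly the assertion you are trying to prove. The domain property of $\mathrm{Sym}(W_+)$ is irrelevant here. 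For a clean illustration that symbols of a generating set need not generate the associated graded ideal, take the Weyl algebra $A_1 = \kk\langle x, y\rangle/(yx - xy - 1)$ with the Bernstein filtration: the left ideal $A_1 x + A_1 y$ equals $A_1$ (since $1 = yx - xy$), so its associated graded is all of $\kk[x,y]$, yet the symbols $\overline{x}, \overline{y}$ generate only the maximal ideal $(x,y)$. In that example $\mathrm{gr}(A_1)$ is noetherian, so one can enlarge the generating set and terminate; in your setting $\mathrm{gr}(U(W_+)) = \kk[e_1, e_2, \ldots]$ is not noetherian, and there is no mechanism forcing such an enlargement to stop after finitely many steps.

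The paper avoids this difficulty entirely. It factors $\lambda_0$ through $\phi: U(W_+) \to S$ (the three-generator AS-regular algebra with the extra variable $w$), sets $B = \phi(U(W_+))$, and computes $I := \phi(\ker\lambda_0) = Qp$ explicitly, where $p = v^3w - v^2w^2$ is normal in $S$ and $Q = \kk\langle u, v, vw\rangle \subset S$. Then if $I$ were finitely generated over $B$ on the left, one would have $I_{n+1} = uI_n + (u-w)vI_{n-1} \subseteq (uS + wS)p$ for large $n$; but $v^{n-3}p \in I_{n+1}$ while no positive power of $v$ lies in $uS + wS$. This is a direct obstruction inside a noetherian ambient ring, with no filtered-to-graded passage needed.
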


We prove this theorem  by noting that $\lambda_0$ factors through $\phi$, and by studying $B: = \phi(U(W_+))$.   A key step  is to   compute $I := \phi(\ker \lambda_0)$, and to show that $I$ is not finitely generated as a left or right ideal of $B$.  

Note that the map \eqref{lambda} can be extended to $W$ to define a map which we denote by $$\widehat{\lambda}_a:  U(W) \to R[v^{-1}].$$  
We also have: 

\begin{theorem}[\eqref{fact}, Theorem~\ref{thm:easy2}] \label{ithm:easy2}
The ideal $\ker \widehat{\lambda}_0 = \ker \widehat{\lambda}_1$ is not finitely generated as either a left or right ideal of $U(W)$.
\end{theorem}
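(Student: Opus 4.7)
The plan is to mirror the proof of Theorem~\ref{thm:easy}, now working in the larger algebra $U(W)$. I expect \eqref{fact} to establish that the formulas in Definition~\ref{idef:maps} extend to well-defined algebra homomorphisms $\widehat{\lambda}_a \colon U(W) \to R[v^{-1}]$ for all $a \in \kk$, and that $\ker \widehat{\lambda}_0 = \ker \widehat{\lambda}_1$. The well-definedness is a routine verification that the Witt relation $[e_n, e_m] = (m-n)\,e_{n+m}$ is preserved in $R[v^{-1}]$ for all $n, m \in \ZZ$, exactly paralleling Lemma~\ref{lem:homom}; the coincidence of kernels follows from a direct computation adapting Proposition~\ref{prop:(d)} to all integer indices.

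For the main claim, suppose for contradiction that $\ker \widehat{\lambda}_0$ is finitely generated as a left ideal of $U(W)$. Since $\widehat{\lambda}_0$ is homogeneous with respect to the $\ZZ$-grading $\deg e_n = n$ on $U(W)$ (and $\deg u = \deg v = 1$ on $R[v^{-1}]$), I may take homogeneous generators $g_1, \dots, g_k$. My plan is to extract from these a finite left-generating set for $\ker \lambda_0$ as a left ideal of $U(W_+)$, contradicting Theorem~\ref{ithm1}. Fix the PBW ordering that places positive-index generators first, so that every element of $U(W)$ has a unique expression $\sum_\mu h_\mu\, \mu$ with $h_\mu \in U(W_+)$ and $\mu$ a PBW monomial in $\{e_n : n \leq 0\}$. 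Let $\{z_m\}_{m \geq 1}$ denote the witness elements in $\ker \lambda_0 \subseteq U(W_+)$ produced in the proof of Theorem~\ref{thm:easy}; since $z_m \in \ker \widehat{\lambda}_0$, we have $z_m = \sum_i h_i^{(m)} g_i$ for some homogeneous $h_i^{(m)} \in U(W)$ of degree $\deg z_m - \deg g_i$.

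The main obstacle is to show that the $h_i^{(m)}$ can, in effect, be replaced by coefficients lying in $U(W_+)$. The plan is to decompose each $h_i^{(m)} = \sum_\mu h_{i,\mu}^{(m)}\, \mu$ in the chosen PBW basis and induct on a suitable measure of ``tail complexity'' of the monomials $\mu$ that appear (for example, the lexicographically largest $\mu$, or the total length of its non-positive part). At each inductive step, the commutation $\mu g_i = g_i \mu + [\mu, g_i]$ must be used to push the tail $\mu$ past $g_i$; the commutator $[\mu, g_i]$ has strictly lower tail complexity, but may produce new positive-index terms that feed back into the inductive hypothesis. The graded structure on $U(W)$ guarantees that only finitely many monomials $\mu$ appear in any given degree, so the induction terminates and yields an expression of $z_m$ as a finite left $U(W_+)$-combination of fixed elements extracted from the $g_i$, as required. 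The right-ideal case follows by the symmetric argument with the opposite PBW ordering.
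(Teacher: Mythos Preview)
Your approach has a genuine gap. You aim to deduce the $U(W)$ statement from Theorem~\ref{ithm1} by showing that if $\ker\widehat{\lambda}_0$ were finitely generated as a left ideal of $U(W)$, then $\ker\lambda_0 = \ker\widehat{\lambda}_0 \cap U(W_+)$ would be finitely generated as a left ideal of $U(W_+)$. But this implication is not established by your inductive sketch. When you commute $\mu g_i = g_i\mu + [\mu,g_i]$ and iterate, you eventually express $z_m$ as a left $U(W_+)$-combination of elements of the form $f_{i,\mu}$ obtained from the PBW coefficients of the various $\mu g_i$. The problem is twofold: (i) this collection $\{f_{i,\mu}\}$ is indexed by \emph{all} PBW monomials $\mu$ in the non-positive generators, hence is infinite, and nothing in your argument bounds which $\mu$ can appear as $m$ grows (the grading only bounds them for each fixed $m$); (ii) there is no reason the individual $f_{i,\mu}$ lie in $\ker\lambda_0$ --- taking the ``$\nu=1$'' coefficient of an element of a left ideal is not an ideal-preserving operation. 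So you never produce a \emph{finite} left-generating set for $\ker\lambda_0$ inside $U(W_+)$, and the contradiction with Theorem~\ref{ithm1} does not follow.

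The paper does \emph{not} reduce to the $U(W_+)$ case. Instead it runs a parallel, self-contained argument in the image. One extends $\phi$ to $\widehat{\phi}:U(W)\to\widehat{S}:=S[v^{-1}]$, sets $\widehat{B}=\widehat{\phi}(U(W))$ and $\widehat{I}=\widehat{\phi}(\ker\widehat{\lambda}_0)$, and proves directly (Proposition~\ref{prop:easy2}) that $\widehat{I}=\widehat{S}p=p\widehat{S}$ where $p=v^3w-v^2w^2$. The non-finite-generation then follows exactly as in Theorem~\ref{thm:easy}: if $\widehat{I}=\widehat{B}(\widehat{I}_{-n}\oplus\cdots\oplus\widehat{I}_n)$, then since $\widehat{\phi}(e_k)\in u\widehat{S}+w\widehat{S}$ for all $k$, one has $\widehat{I}_k\subseteq u\widehat{S}+w\widehat{S}$ for $|k|>n$; but $v^{n-3}p\in\widehat{I}_{n+1}$ while no power of $v$ lies in $u\widehat{S}+w\widehat{S}$. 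The equality $\ker\widehat{\lambda}_0=\ker\widehat{\lambda}_1$ (which is \eqref{fact}) comes from conjugation by $u$ in the quotient division ring, not from an adaptation of Proposition~\ref{prop:(d)}.
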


We remark that $R[v^{-1}]$ is isomorphic to the ring $\kk[x,x^{-1}, \partial]$, which is a familiar  localization of the Weyl algebra. To see this,  set $v = x$ and $u  = x^2 \partial$, so $\partial x = x\partial +1$.  Then, $uv-vu = x^2 = v^2$.  
We obtain that
 \[\widehat{\lambda}_1(e_n) = v^{n-1} u = x^{n+1} \partial.\]
Thus, $\widehat{\lambda}_1$ is a well-known homomorphism.  

We now compare  Theorem~\ref{ithm1} with our earlier proof (in \cite{SieWal:Witt}) that $U(W_+)$ is not left or right noetherian.
The earlier proof used a ring homomorphism $\rho$ with a more complicated definition: 

\begin{notation}[$X$, $f$, $\tau$, $\rho$] \label{inot1} 
Take $\PP^3:=\PP^3_\kk$ with coordinates $w,x,y,z$.  
 Let $X= V(xz-y^2) \subseteq \PP^3$ be the projective cone over a smooth conic in $\PP^2$. 
 
 Define an automorphism $\tau$ of $ X$ by
$$\tau([w:x:y:z]) = [w-2x+2z:z:-y-2z:x+4y+4z].$$
 Denote the pullback of  $\tau$ on $\kk(X)$ by $\tau^*$, so that $g^\tau := \tau^* g = g \circ \tau$ for $g \in \kk(X)$.  
 Form the ring $\kk(X)[t; \tau^*]$ with multiplication $tg = g^\tau t$ for all $g \in \kk(X)$. 
Let  $$f:= \displaystyle \frac{w+12x+22y+8z}{12x+6y},$$ 
considered as a rational function in $\kk(X)$.
Now let 
$\rho: U(W_+) \to \kk(X)[t; \tau^*]$ be the algebra homomorphism induced by  setting $\rho(e_1) = t $ and $\rho(e_2) = ft^2$.
 \end{notation}
 \noindent
That $\rho$ is well-defined is \cite[Proposition~1.5]{SieWal:Witt}. 

The method \cite{SieWal:Witt} made a number of reductions to show that $\rho(U(W_+))$ is not left or right noetherian. 
That proof
can now be streamlined via the next result.

\begin{theorem}[Theorem~\ref{thm:kernels}] \label{ithm4}
We have that $\ker \rho = \ker \phi = \bigcap_{a \in \kk} \ker \lambda_a$.  
\end{theorem}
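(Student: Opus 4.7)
The plan is to prove the two equalities $\ker\phi = \bigcap_{a\in\kk}\ker\lambda_a$ and $\ker\phi = \ker\rho$ separately.

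For the first equality, I would introduce the family of $\kk$-algebra specializations $\pi_a \colon S \to R$ (for $a \in \kk$) defined by $u \mapsto u$, $v \mapsto v$, $w \mapsto av$, along with their universal version $\Psi \colon S \to R[s]$, with $s$ a central indeterminate, given by $u \mapsto u$, $v \mapsto v$, $w \mapsto sv$. Well-definedness of $\pi_a$ and $\Psi$ is a direct relation-check: the two non-Jordan-plane relations of $S$ (see Notation~\ref{inot0}) specialize to $a(uv-vu-v^2) = 0$ and $0$ in $R$, respectively. Comparing \eqref{phi} and \eqref{lambda} yields $\lambda_a = \pi_a \circ \phi$, so $\ker\phi \subseteq \bigcap_a \ker\lambda_a$. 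For the reverse inclusion the crucial step is injectivity of $\Psi$, which I would establish by showing that $\{w^i v^j u^k : i,j,k \geq 0\}$ is a $\kk$-basis of $S$ via the Diamond Lemma applied to the reductions $uv \mapsto vu + v^2$, $uw \mapsto wu + vw$, $vw \mapsto wv$ (consistent with the Hilbert series $(1-t)^{-3}$ for $S$ implied by its AS-regularity in Lemma~\ref{Zhang}), and then noting that $\Psi(w^i v^j u^k) = s^i v^{i+j} u^k$ are distinct basis elements of $R[s]$. Given injectivity, any $f \in \bigcap_a \ker\lambda_a$ makes $\Psi(\phi(f)) \in R[s]$ a polynomial in $s$ vanishing at every $a \in \kk$; since $\kk$ is infinite, this forces $\Psi(\phi(f)) = 0$ and hence $\phi(f) = 0$.

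For $\ker\phi = \ker\rho$, the plan is to construct an injective graded $\kk$-algebra homomorphism $\iota \colon S \hookrightarrow \kk(X)[t;\tau^*]$ (grading the codomain by $t$-degree) satisfying $\iota \circ \phi = \rho$, from which both inclusions follow at once. I would search for $\iota$ in the degree-preserving form $\iota(u) = t$, $\iota(v) = gt$, $\iota(w) = ht$ for some $g,h \in \kk(X)$. Translating the relations of $S$ and the requirement $\iota \circ \phi(e_2) = \rho(e_2) = f t^2$ into $\kk(X)[t;\tau^*]$ produces the system
\[g^\tau = \frac{g}{1-g}, \qquad h^\tau = \frac{h}{1-g}, \qquad g^\tau(1-h) = f,\]
and a direct check shows the $S$-relation $vw - wv = 0$ is automatic. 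The substitution $G := 1/g$ converts the first equation into the additive shift $G^\tau = G - 1$, which I would solve explicitly using the formulas for $\tau$ on the quadric cone $X$ in Notation~\ref{inot1}; $h$ is then pinned down by the $f$-equation. Because $e_1$ and $e_2$ generate $W_+$ as a Lie algebra (the Witt relation $[e_1, e_{n-1}] = (n-2) e_n$ propagates all higher $e_n$), they generate $U(W_+)$ as an associative algebra, so the identity $\iota \circ \phi = \rho$ need only be checked on $e_1$ and $e_2$, and it holds by construction. Injectivity of $\iota$ I would verify by computing $\iota(w^i v^j u^k)$ as an explicit product of $\tau$-conjugates of $g$ and $h$ times $t^{i+j+k}$ and confirming $\kk$-linear independence in $\kk(X)[t;\tau^*]$.

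The main obstacle is the construction of $\iota$: solving $G^\tau = G - 1$ in the function field of the cone $X$ and verifying $\kk$-linear independence of the images of the PBW basis of $S$ both reduce to concrete computations with the explicit automorphism $\tau$ given in Notation~\ref{inot1}. Once $\iota$ is in hand, the rest---the relation checks for $\pi_a$ and $\Psi$, and the reduction of $\iota \circ \phi = \rho$ to the two generators---is routine.
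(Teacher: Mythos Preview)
Your first half (the equality $\ker\phi=\bigcap_a\ker\lambda_a$) is essentially what the paper does: your specializations $\pi_a$ are exactly the paper's $i_a^*\colon S\to R$, $z\mapsto ay$, and your ``universal'' map $\Psi\colon S\to R[s]$ together with polynomial vanishing over an infinite field is the algebraic version of the paper's geometric observation that the lines $V(z-ay)$ cover $\PP^2\smallsetminus V(y)$, hence $\bigcap_a\ker i_a^*=0$. Your Diamond Lemma justification of the basis $\{w^iv^ju^k\}$ is fine and slightly more explicit than the paper's appeal to Lemma~\ref{Zhang}.

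Your second half is a genuinely different route. The paper does \emph{not} build an embedding $\iota\colon S\hookrightarrow\kk(X)[t;\tau^*]$; instead it goes the other direction. It constructs, for each $a$, a morphism $\psi_a\colon\PP^1\to X$ intertwining $\nu$ and $\tau$, extends pullback to $\Psi_a\colon\kk(X)[t;\tau^*]\to\kk(\PP^1)[s;\nu^*]$, and exhibits an injective $\gamma\colon R\to\kk(\PP^1)[s;\nu^*]$ with $\Psi_a\rho=\gamma\lambda_a$. Since the image curves $C_a=\psi_a(\PP^1)$ sweep out a dense open in $X$, one gets $\bigcap_a\ker\Psi_a=0$ and hence $\ker\rho=\bigcap_a\ker\lambda_a$. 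Your approach instead ties $\phi$ and $\rho$ together directly; this has the pleasant side effect that $\iota$ would give an explicit isomorphism $B=\phi(U(W_+))\xrightarrow{\sim}\rho(U(W_+))$ rather than deducing it from equal kernels.

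Two comments on the feasibility of your $\iota$. First, the functional equation $G^\tau=G-1$ does have the solution $G=(2x+y)/(x+y)\in\kk(X)$; one checks directly, using $xz=y^2$, that $G^\tau=y/(y+z)=x/(x+y)=G-1$. So $g=1/G$ exists, and $h$ is then forced by the $f$-equation. Second, the injectivity of $\iota$ is the real work, and your plan of ``confirming $\kk$-linear independence'' needs sharpening. Using the recursions you derived, one finds $g^{\tau^m}=1/(G-m)$ and $h^{\tau^m}=hG/(G-m)$, so in degree $n$ the coefficient of $\iota(w^iv^ju^k)$ is $(hG)^i\big/\prod_{m=0}^{n-k-1}(G-m)$. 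Clearing denominators, linear independence over $\kk$ reduces to showing that $G$ and $hG$ are algebraically independent in $\kk(X)$; this is true but is a genuine computation with the explicit $h$ (equivalently, that $\kk(G,h)=\kk(X)$). The paper's covering-by-curves argument sidesteps this algebraic-independence check at the cost of introducing the auxiliary maps $\psi_a$ and $\gamma$.
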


Since we show that $\phi(U(W_+))$ is not left or right noetherian in the course of proving Theorem~\ref{ithm1}, we have 
  by Theorems~\ref{ithm2}(c) and~\ref{ithm4} that   $\rho(U(W_+)) \cong \phi(U(W_+)) \cong U(W_+)/(e_1e_5 - 4e_2 e_4+ 3e_3^2+2e_6)$ is neither left or right noetherian.

We end by discussing an open question that was first brought to our attention by Lance Small. 

\begin{question} \label{ques:ACC} Does $U(W_+)$ satisfy the ascending chain condition on {\em two-sided} ideals?
\end{question}
  
Our result here is only partial:  we show that

\begin{proposition}[Proposition~\ref{prop3}] \label{iprop3}
The ring $B := \phi(U(W_+))$ satisfies the ascending chain condition on two-sided ideals.
\end{proposition}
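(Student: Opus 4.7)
The strategy combines noetherianity of a natural quotient of $B$ with noetherianity of the ambient AS-regular algebra $S$ (Lemma~\ref{Zhang}).

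\emph{Reduction modulo $I$.} Set $I := \phi(\ker \lambda_0) \subseteq B$. By Theorem~\ref{ithm4}, $\ker \phi \subseteq \ker \lambda_0$, so $\lambda_0$ factors through $\phi$ as a surjection $\bar{\lambda}_0 \colon B \twoheadrightarrow \lambda_0(U(W_+))$ with kernel $I$. By Theorem~\ref{ithm2}(b), $\lambda_0(U(W_+)) = \kk + uR$ is noetherian, so $B/I$ is noetherian. For any ascending chain $J_1 \subseteq J_2 \subseteq \cdots$ of two-sided ideals of $B$, the induced chain in $B/I$ stabilizes, giving $J_n + I = J_N + I$ for all $n \geq N$; it then follows that $J_n = J_N + (J_n \cap I)$ for $n \geq N$, and it suffices to prove ACC for ascending chains of sub-$B$-bimodules of $I$.

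\emph{Structure of $I$.} A direct computation in $S$ using $uv = vu + v^2$, $uw = wu + vw$, $vw = wv$ yields
\[
\phi(e_1 e_3 - e_2^2 - e_4) \;=\; v^2 w(v - w) \;=\; v^3 w - v^2 w^2,
\]
so $I$ is the two-sided $B$-ideal generated by this single element. Moreover, $v^2$ is a normal element of $S$: one verifies that $v^2 u = (u - 2v)v^2$ while $v$ and $w$ commute with $v^2$. Hence $I \subseteq v^2 S = S v^2$, i.e., $I$ embeds into a principal two-sided ideal of the noetherian ring $S$.

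\emph{Transfer of ACC.} The crux is to show ACC for sub-$B$-bimodules of $I$. Given such a chain $K_1 \subseteq K_2 \subseteq \cdots \subseteq I$, the chain $\{S K_n S\}$ of two-sided $S$-ideals stabilizes by noetherianity of $S$. I expect the main obstacle to be passing back from $S$-ideal data to $B$-bimodule structure — for instance, proving that $K_n$ is controlled by $S K_n S \cap B$ up to finite adjustment, since the lattice of sub-$B$-bimodules is a priori richer than that of $S$-ideals. A promising auxiliary is the family $\bar{\lambda}_a \colon B \to \lambda_a(U(W_+))$ for $a \neq 0, 1$: by Theorem~\ref{ithm2}(b) each image is noetherian, and the computation $\lambda_a(e_1 e_3 - e_2^2 - e_4) = a(1-a) v^4 \neq 0$ shows that each such $\bar{\lambda}_a$ is nontrivial on $I$. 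Using finitely many of these maps in tandem with the embedding $B \hookrightarrow S$ should yield enough noetherian data to recover each sub-bimodule of $I$ from finite pieces of information.
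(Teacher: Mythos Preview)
Your reduction is sound: since $B/I \cong A(0)$ is noetherian, it suffices to establish ACC on sub-$B$-bimodules of $I$. But your third paragraph is not a proof --- you yourself flag the obstacle (``passing back from $S$-ideal data to $B$-bimodule structure'') and then offer only a hope that finitely many $\bar{\lambda}_a$ ``should yield enough noetherian data.'' No mechanism is given for bounding which $a$'s are needed or for reconstructing a $B$-subbimodule of $I$ from its images under a fixed finite collection of $\bar{\lambda}_a$'s together with $SK S$; distinct $B$-subbimodules of $I$ can share all of this data, so stabilization of $\{SK_n S\}$ and of finitely many $\{\bar\lambda_a(K_n)\}$ does not force stabilization of $\{K_n\}$.

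The missing structural input is Lemma~\ref{lem:easy}(c): $I = Qp = pQ$, so $B$ actually \emph{contains} a nonzero two-sided ideal of the noetherian ring $Q$. This is far stronger than the embedding $I \subseteq v^2 S$ you record. It gives, for any $B$-subbimodule $K$ of $Q$, the sandwich $K = BKB \supseteq QpKpQ$, where the right-hand side is a genuine $Q$-ideal and is therefore controlled by noetherianity of $Q$. The remaining step --- that $Q$ modulo any nonzero graded $Q$-ideal is a noetherian $B$-bimodule --- does eventually use the maps $\eta_a$ you mention (Lemma~\ref{lem:normalfact}), but only after a geometric argument on the weighted projective plane $\PP(1,1,2)$ (Proposition~\ref{prop:noeth2}) reducing to successive factors of the form $hQ/hvQ$ and $hQ/h(vw-av^2)Q$ for normal $h \in Q$. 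Your instinct about the $\bar\lambda_a$ was in the right direction, but the argument cannot get started without the inclusion $pQ \subseteq B$.
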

\noindent Of course, this yields no direct information on the question for $U(W_+)$.

We have the following conventions throughout the paper. We take
$\NN = \ZZ_{\geq 0}$ to be the set of non-negative integers. 
If $r$ is an element of a ring $A$, then $(r)$ denotes the two-sided ideal $ArA$ generated by $r$.
If $A = \bigoplus_{n \in \ZZ} A_n$ is a graded $\kk$-algebra (or graded module), then we define the Hilbert series
\[ \hilb A = \textstyle \sum_{n \in \ZZ} \dim_\kk A_n t^n.\]

This article is organized as follows. We present preliminary results in Section~\ref{sec:prelim}, including an alternative way of multiplying elements in $S$ and in $R$ (Lemma~\ref{Zhang}); this method will be employed throughout, sometimes without mention. In Section~\ref{LAMBDA}, we discuss the maps $\lambda_a$ and prove parts (a,b) of Theorem~\ref{ithm2}. 
In Section~\ref{sec:easy} we use the map $\lambda_0$ to establish Theorem~\ref{ithm1}; we also prove Theorem~\ref{ithm:easy2}. 

Before proceeding to study the map $\phi$, we present its connection with the map $\rho$, the key homomorphism of \cite{SieWal:Witt}. Namely, in Section~\ref{sec:kernels}, we establish Theorem~\ref{ithm4}. Then in Section~\ref{PHI}, we verify part (c) of Theorem~\ref{ithm2}. Our last result, Proposition~\ref{iprop3}, is presented in Section~\ref{ACC}.  Proofs of computational claims via Maple and Macaulay2 routines and a known result in ring theory to which we could not find a reference are provided in the appendix.


\section{Preliminaries} \label{sec:prelim}

The main focus of this paper is the universal enveloping algebra of the positive Witt algebra, $W_+$.
To begin, we recall some basic facts about the algebra $U(W_+)$.

\begin{lemma} \label{lem:present}
Recall Definition~\ref{idef:Liealgs}\lp c\rp.
\begin{enumerate} \item
 We have the following isomorphism:
$$U(W_+) \cong
\frac{\kk \langle e_1, e_2 \rangle}
{\left( 
\begin{array}{c}
[e_1,[e_1,[e_1,e_2]]]+6[e_2,[e_2,e_1]],\\
\left[e_1,\left[e_1,\left[e_1,\left[e_1,\left[e_1,e_2\right]\right]\right]\right]\right]
+40[e_2,[e_2,[e_2,e_1]]] 
\end{array} 
\right)}.$$ 
\vspace{.0in}
\item
The set
$ \{ e_{i_1} e_{i_2} \dots e_{i_k} \ | \ k \in \NN \mbox{ and } 1 \leq i_1 \leq i_2 \leq \dots \leq i_k \in \NN \}$
forms a $\kk$-vector space basis of~$U(W_+)$.
\end{enumerate}
\end{lemma}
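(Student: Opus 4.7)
The plan is to handle (b) first via the Poincar\'e--Birkhoff--Witt (PBW) theorem and then to leverage the corresponding Lie-algebraic presentation of $W_+$ to deduce (a). Part (b) is immediate: $\{e_n\}_{n \geq 1}$ is a totally ordered $\kk$-basis of $W_+$, so PBW applied to $U(W_+)$ produces exactly the claimed ordered monomial basis.

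For (a), my plan is to first establish the \emph{Lie-algebraic} presentation of $W_+$ with generators $e_1,e_2$ modulo the Lie ideal generated by the two displayed brackets, and then to apply the universal property of $U(-)$: a Lie algebra presented by generators $X$ modulo Lie relations $S$ has universal enveloping algebra $\kk\ang{X}/(S)$ as an associative algebra. Three ingredients feed in. First, \emph{generation}: the relation $[e_1,e_n]=(n-1)e_{n+1}$ lets one recursively solve $e_{n+1}=(n-1)^{-1}[e_1,e_n]$ for $n \geq 2$, so $\{e_1,e_2\}$ generates $W_+$ as a Lie algebra. Second, \emph{the relations hold}: using $\ad(e_1)^k(e_2)=k!\,e_{k+2}$ and analogous computations for iterated brackets of $e_2$ against $e_1$, both displayed elements evaluate to zero in $W_+$ (e.g., $\ad(e_1)^3(e_2)=6e_5$ while $[e_2,[e_2,e_1]]=-e_5$, so the first relation is $6e_5-6e_5=0$).

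The third and main obstacle is \emph{sufficiency}---showing that these two relations cut out $W_+$ from the free Lie algebra on $e_1,e_2$. The plan is to cite this: the Lie-algebraic presentation of $W_+$ by precisely these two relations is classical and appears in the literature on Lie algebra cohomology and deformations of Virasoro-type algebras. Failing a convenient citation, the fallback is a graded argument---put $\deg e_i = i$ on the free Lie algebra on $e_1,e_2$, systematically reduce iterated brackets to a canonical form using the two relations, and verify that each weight-$n$ space of the Lie quotient has $\kk$-dimension at most one, matching $\dim(W_+)_n=1$. This weight-by-weight reduction is where the real calculation would live, and matching the Hilbert series of the presented associative algebra against $\prod_{n\geq 1}(1-t^n)^{-1}$ (the PBW series from part~(b)) then closes (a).
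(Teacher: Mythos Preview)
Your approach is essentially the same as the paper's. The paper dispatches (b) by PBW (``clear from the definition'') and for (a) simply cites \cite[Lemma~1.1]{SieWal:Witt}, where the authors proved this presentation in earlier work. Your primary plan for (a) is likewise to cite the classical Lie-algebraic presentation, and your treatment of (b) is identical.

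Your elaboration---reducing (a) to the Lie presentation via $U(\mathrm{FreeLie}(X)/(S)_{\mathrm{Lie}})\cong \kk\langle X\rangle/(S)$, checking generation, and verifying the two relations vanish---is correct and adds useful context the paper omits. The only caveat is that your fallback (the weight-by-weight reduction showing each graded piece of the quotient Lie algebra has dimension $\leq 1$) is where all the content lies and is genuinely nontrivial to execute: one must systematically normalize arbitrary iterated brackets of $e_1,e_2$ in each degree using just the two relations, and this combinatorics is not immediate. Since both you and the paper default to citation for this step, there is no gap, but be aware that the fallback is a real computation, not a formality.
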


\begin{proof}
Part (a) is
 \cite[Lemma~1.1]{SieWal:Witt}, 
 and part (b) is clear from the definition of $U(W_+)$.  
  \end{proof}

Next, 
let us present some notation that we will use for the rest of the paper.  We will work with the algebras $R$ and $S$ defined in Notation~\ref{inot0}; note that we can view $R$ as a subalgebra of $S$.    In addition:

\begin{notation}[$Q$]\label{not:Q}
Take  $Q$ to be the  subalgebra of $S$ generated by $u$, $v$, and $vw$.  
\end{notation}

 In our first result, we provide an easy way to multiply elements in $S$. Recall from \cite{Zhangtwist} that a {\it Zhang twist} of  a graded algebra $L$, by an automorphism $\mu$ of $L$, is the algebra $L^\mu$,
 where $L^\mu = L$ as graded vector spaces and $L^\mu$ has multiplication $\ell \ast \ell' = \ell (\ell')^{\mu^{i}}$ for $\ell \in L_i$ and $\ell' \in L$.
 
 Moreover, recall that an {\it Artin-Schelter} ({\it AS-}){\it regular algebra} is a connected graded algebra $A$ of finite global dimension, of finite injective dimension $d$ with $\text{Ext}_A^i({}_A \kk, {}_A A) \cong \text{Ext}_A^i(\kk_A, A_A) \cong \delta_{i,d} \kk$ (that is, $A$ is {\it AS-Gorenstein}), and has finite Gelfand-Kirillov dimension.
 These algebras are important in noncommutative ring theory because they are noncommutative analogues of polynomial rings and share many of their good properties.

\begin{lemma}[$\mu$, $\nu$] \label{Zhang}
Let $\mu \in \Aut(\kk[x,y,z])$ be defined by  
$$\mu(x) = x-y, \quad \mu(y)=y, \quad \mu(z)=z.$$
Then,  $S$ is isomorphic to the Zhang twist  $\kk[x,y,z]^\mu$.  
Further, $\mu$ restricts to an  automorphism of $\kk[x,y,yz]$, which we also denote by $\mu$, and  to an automorphism $\nu$ of $\kk[x,y]$.  
We have that
 $R \cong \kk[x,y]^{\nu}$ 
 and 
 \linebreak $Q \cong \kk[x,y,yz]^\mu$
 as graded $\kk$-algebras. As a consequence, $S$, $R$, and $Q$ are AS-regular algebras.
 \end{lemma}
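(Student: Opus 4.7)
The plan is to construct an explicit graded algebra surjection $\varphi: S \to \kk[x,y,z]^\mu$, upgrade it to an isomorphism via a Hilbert series comparison, and then obtain the statements about $R$ and $Q$ by restricting $\varphi$ to appropriate subalgebras. AS-regularity will follow from the invariance of the AS-regular property under Zhang twists. The first step is to verify that $\mu$ is a graded algebra automorphism of $\kk[x,y,z]$: it acts linearly on the degree-one generators and has the inverse sending $x \mapsto x+y$, $y \mapsto y$, $z \mapsto z$. In the Zhang twist $\kk[x,y,z]^\mu$ I compute the twisted commutators among the generators:
\begin{align*}
x \ast y - y \ast x &= x\mu(y) - y\mu(x) = xy - y(x-y) = y^2 = y \ast y, \\
x \ast z - z \ast x &= x\mu(z) - z\mu(x) = xz - z(x-y) = yz = y \ast z, \\
y \ast z - z \ast y &= y\mu(z) - z\mu(y) = yz - zy = 0.
\end{align*}
These are precisely the defining relations of $S$ under $u \leftrightarrow x$, $v \leftrightarrow y$, $w \leftrightarrow z$, so the universal property of $S$ yields the desired graded surjection $\varphi$.

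To show $\varphi$ is injective, I would use the relations of $S$ as a rewriting system (replacing $vu$ by $uv - v^2$, $wu$ by $uw - vw$, and $wv$ by $vw$) to put every monomial in $u,v,w$ into the ordered form $u^a v^b w^c$. This bounds $\dim_\kk S_n \leq \binom{n+2}{2}$, which equals $\dim_\kk (\kk[x,y,z]^\mu)_n$ since the Zhang twist does not change the underlying graded vector space, and so $\varphi$ is an isomorphism. Restricting $\varphi$ to the subalgebra generated by $u, v$ and noting that $\mu$ preserves $\kk[x,y]$ with restriction $\nu$, the same argument (with the bound $\dim_\kk R_n \leq n+1$) gives $R \cong \kk[x,y]^\nu$.

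For $Q$ I first observe that $\mu$ fixes $y$ and $z$, hence fixes $yz$, so $\mu$ preserves the subalgebra $\kk[x,y,yz] \subseteq \kk[x,y,z]$. Since $x, y, yz$ are algebraically independent in $\kk[x,y,z]$, this subalgebra is itself a polynomial ring, albeit with nonstandard grading ($\deg(yz) = 2$). Then $\kk[x,y,yz]^\mu$ is a graded subalgebra of $\kk[x,y,z]^\mu$, and $\varphi$ sends the generators $u, v, vw$ of $Q$ to $x, y, y \ast z = yz$, so $\varphi(Q) \subseteq \kk[x,y,yz]^\mu$. An analogous rewriting argument, using ordered monomials $u^a v^b (vw)^c$ in $Q$, matches the Hilbert series on both sides and proves $Q \cong \kk[x,y,yz]^\mu$. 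Finally, AS-regularity of all three algebras is immediate: each is a Zhang twist of a polynomial ring in two or three variables, and Zhang twists preserve the AS-regular property.

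The main obstacle is the injectivity / rewriting step: one must check that the proposed reductions terminate in a unique normal form, or equivalently prove directly that the ordered monomial sets are linearly independent. For $S$ and $R$ this is a standard PBW-style computation, while for $Q$ the nonstandard grading on $\kk[x,y,yz]$ adds a minor book-keeping complication when matching Hilbert series, though this amounts only to counting integer solutions of $a+b+2c=n$.
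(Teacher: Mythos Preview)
Your proposal is correct and follows essentially the same approach as the paper: verify that the defining relations of $S$ hold in $\kk[x,y,z]^\mu$ under the identification $u\leftrightarrow x$, $v\leftrightarrow y$, $w\leftrightarrow z$, then conclude the isomorphism and treat $R$, $Q$ analogously, with AS-regularity inherited from the polynomial ring via \cite[Theorem~1.3(i)]{Zhangtwist}. The paper is terser---it simply asserts the isomorphism once the relations are checked---whereas you spell out the injectivity step via Hilbert series and a rewriting argument, which is a welcome elaboration but not a different strategy.
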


\begin{proof}
To see that $S \cong \kk[x,y,z]^\mu$, we emphasize that
\begin{equation} \label{multiplyS}
\begin{array}{c}
\text{the variables $u, v, w$ of $S$ have noncommutative multiplication,}\\ 
\text{the variables $x, y, z$ of $\kk[x,y,z]$ have commutative multiplication, and}\\ 
\text{$\ast$ denotes the noncommutative multiplication on $\kk[x,y,z]^\mu$ defined by}\\
\text{$\ell \ast \ell' = \ell (\ell')^{\mu^{i}}$ for $\ell \in \kk[x,y,z]_i$ and $\ell' \in \kk[x,y,z]$.}
\end{array}
\end{equation}
Now,
\[
\begin{array}{lllllll}
y \ast x &= yx^\mu &= y(x-y) &= (x-y) y &= xy - y^2 &= xy^\mu - yy^\mu &= x \ast y - y \ast y,\\
 z \ast x &= zx^\mu &= z(x-y) &= (x-y) z &= xz - yz &= xz^\mu - yz^\mu &= x \ast z - y \ast z ,\\
  z \ast y & = zy^\mu &= zy &= yz &= yz^\mu &&= y \ast z.
\end{array}
\]
Thus,  if we identify $u, v, w$ with $x, y, z$, respectively, then  the relations of $S$ hold in $\kk[x,y,z]^\mu$, and $S \cong \kk[x,y,z]^\mu$ as graded $\kk$-algebras. 

That $\mu$ restricts to automorphisms of $\kk[x,y]$ and $\kk[x,y,yz]$ is immediate, and the other isomorphisms hold by a similar argument. Moreover, the last statement follows as commutative polynomial rings are AS-regular and this property is preserved under Zhang twist  by \cite[Theorem~1.3(i)]{Zhangtwist}.
\end{proof}

Now we verify that the algebra homomorphisms $\lambda_a$ and $\phi$ from Definition~\ref{idef:maps} are well-defined.

\begin{lemma} \label{lem:homom}
The maps $\phi$ and $\lambda_a$ of Definition~\ref{idef:maps} are well-defined homomorphisms of graded $\kk$-algebras. 
\end{lemma}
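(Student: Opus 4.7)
The plan is to use the presentation of $U(W_+)$ coming from the Lie algebra structure of $W_+$: a $\kk$-linear map $\psi:W_+\to A$ into an associative algebra extends to an algebra homomorphism $U(W_+)\to A$ iff $\psi(e_n)\psi(e_m)-\psi(e_m)\psi(e_n)=(m-n)\psi(e_{n+m})$ for all $n,m\geq 1$. I therefore need to verify exactly this identity for $\psi=\phi$ inside $S$, for each pair $n,m\geq 1$. Gradedness is automatic, since $u,v,w$ all have degree $1$ in $S$, so $\phi(e_n)=(u-(n-1)w)v^{n-1}$ lands in $S_n$.

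To make the computation tractable, I would pass to the Zhang-twist model $S\cong\kk[x,y,z]^\mu$ from Lemma~\ref{Zhang}, under the identification $u\leftrightarrow x$, $v\leftrightarrow y$, $w\leftrightarrow z$. Writing $\bar a_n:=(x-(n-1)z)y^{n-1}$, the element $\phi(e_n)$ corresponds to $\bar a_n$ regarded as a degree-$n$ vector in $\kk[x,y,z]^\mu$. Using $\mu^n(x)=x-ny$, $\mu^n(y)=y$, $\mu^n(z)=z$, the twisted product is
\[
\bar a_n\ast \bar a_m \;=\; \bar a_n\cdot \mu^n(\bar a_m)
\;=\;(x-(n-1)z)\bigl(x-ny-(m-1)z\bigr)y^{n+m-2},
\]
with multiplication on the right side taking place in the commutative ring $\kk[x,y,z]$. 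Subtracting the analogous expression for $\bar a_m\ast \bar a_n$, the $x^2$, $xz$, and $z^2$ terms cancel, and the remaining $xy$ and $yz$ terms combine to give
\[
\bar a_n\ast \bar a_m-\bar a_m\ast \bar a_n
\;=\;(m-n)\bigl(x-(n+m-1)z\bigr)y^{n+m-1}
\;=\;(m-n)\bar a_{n+m},
\]
which is exactly the required identity. This establishes that $\phi$ is a well-defined graded algebra homomorphism.

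For $\lambda_a$, rather than redoing the same calculation, I would exhibit a graded surjection $\pi_a:S\twoheadrightarrow R$ that sends $u\mapsto u$, $v\mapsto v$, $w\mapsto av$ and then define $\lambda_a:=\pi_a\circ\phi$. To check $\pi_a$ is well-defined, substitute $w=av$ into the three defining relations of $S$: the first is the Jordan-plane relation itself, and the remaining two reduce to $a(uv-vu-v^2)=0$ and $0=0$ using the relation in $R$. Since $\pi_a(\phi(e_n))=(u-(n-1)av)v^{n-1}=\lambda_a(e_n)$, the composite is the desired map, and it is graded because $\phi$ and $\pi_a$ are.

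The only real obstacle is bookkeeping in the polynomial identity $\bar a_n\ast\bar a_m-\bar a_m\ast\bar a_n=(m-n)\bar a_{n+m}$; everything else (the universal property of $U(W_+)$, the construction of $\pi_a$, and the degree check) is formal. No appeal to the two-generator-two-relation presentation of Lemma~\ref{lem:present}(a) is needed, which is convenient since verifying the degree-$4$ and degree-$6$ relators on $\phi(e_1),\phi(e_2)$ directly would be substantially more tedious than the short twist calculation above.
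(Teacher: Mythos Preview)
Your argument for $\phi$ is exactly the paper's: pass to the Zhang-twist model $S\cong\kk[x,y,z]^\mu$ and check the Witt relation by a direct polynomial computation. For $\lambda_a$ there is a small but genuine difference: the paper simply repeats the same twist computation with $z$ replaced by $ay$, whereas you factor $\lambda_a=\pi_a\circ\phi$ through the quotient map $\pi_a:S\to R$, $w\mapsto av$. Your route is a pleasant economy and is in fact anticipated elsewhere in the paper---the same map reappears as $\eta_a|_S$ in Notation~\ref{not:widehat} and as $i_a^*$ in the proof of Theorem~\ref{thm:kernels}---but is not used in the paper's proof of this particular lemma.
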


\begin{proof}
We check that $\phi$ respects the Witt relations given in 
Definition~\ref{idef:Liealgs}(b),
by using Lemma~\ref{Zhang} and~\eqref{multiplyS}:
\[
{\small \begin{array}{rl}
\phi(e_ne_m - e_m e_n)
&= (u-(n-1)w)v^{n-1} (u-(m-1)w)v^{m-1} - (u-(m-1)w)v^{m-1}(u-(n-1)w) v^{n-1} \\
 &= (x-(n-1)z)(x-(m-1)z)^{\mu^{n}} y^{n+m-2} - (x-(m-1)z)(x-(n-1)z)^{\mu^m} y^{n+m-2}\\
 &= [(x-(n-1)z)(x-ny-(m-1)z)-(x-(m-1)z)(x-my-(n-1)z)] y^{n+m-2} \\
 &= (m-n)xy^{n+m-1} + (n(n-1)-m(m-1)) y^{n+m-1}z\\ 
 &= (m-n)(x-(n+m-1)z) y^{n+m-1}\\ 
 &= (m-n)(u-(n+m-1)w) v^{n+m-1}\\ 
 &= (m-n)\phi(e_{n+m}).
\end{array}}
\]
So, the claim holds for $\phi$.  

Similarly, we verify that $\lambda_a$ respects the Witt relations:
\[
{\small \begin{array}{rl}
\lambda_a(e_ne_m - e_m e_n)
&= (u-(n-1)av)v^{n-1} (u-(m-1)av)v^{m-1} - (u-(m-1)av)v^{m-1}(u-(n-1)av) v^{n-1} \\
&= [(x-(n-1)ay)(x-ny-(m-1)ay)-(x-(m-1)ay)(x-my-(n-1)ay)] y^{n+m-2} \\
 &= (m-n)(x-a(n+m-1)y) y^{n+m-1}\\ 
 &= (m-n)(u-a(n+m-1)v) v^{n+m-1}\\ 
 &= (m-n)\lambda_a(e_{n+m}).
\end{array}}
\]
Thus, the claim holds for $\lambda_a$.
 \end{proof}
 
 Next, we define the key algebras $A(a)$ and $B$ that we will use  throughout.
  
 \begin{notation}[$A(a)$, $B$] \label{not1} 
Take $a \in \kk$ and let $A(a)$  denote the subalgebra $\lambda_a(U(W_+))$ of $R$. Further, let $B$ denote the subalgebra $\phi(U(W_+))$ of $S$.
\end{notation}

We point out a useful observation. 

\begin{lemma} \label{lem:BinQ} 
We have that $B \subseteq Q$. 
\end{lemma}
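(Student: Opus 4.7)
The plan is to reduce to checking that each generator of $B$ lies in $Q$. Since $U(W_+)$ is generated as an algebra by $\{e_n\}_{n \geq 1}$ (indeed by $e_1, e_2$ alone, per Lemma~\ref{lem:present}(a)), the image $B = \phi(U(W_+))$ is generated as a subalgebra of $S$ by $\{\phi(e_n)\}_{n \geq 1}$. So it suffices to verify $\phi(e_n) \in Q$ for every $n \geq 1$.

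The case $n=1$ is immediate since $\phi(e_1) = u$, which is one of the generators of $Q$. For $n \geq 2$, expand
\[
\phi(e_n) \;=\; (u - (n-1)w)v^{n-1} \;=\; uv^{n-1} \;-\; (n-1)\, wv^{n-1}.
\]
The first summand $uv^{n-1}$ is a product of the generators $u$ and $v$ of $Q$, hence lies in $Q$. For the second summand, the key observation is that the defining relations of $S$ (see Notation~\ref{inot0}) include $vw - wv = 0$, so $w$ and $v$ commute in $S$. Therefore $wv^{n-1} = (wv)\,v^{n-2} = (vw)\,v^{n-2}$, which is a product of the generators $vw$ and $v$ of $Q$, and so also lies in $Q$.

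Combining these, $\phi(e_n) \in Q$ for all $n \geq 1$, and hence $B \subseteq Q$. There is essentially no obstacle here beyond noticing that the commutation $vw = wv$ lets one move powers of $v$ past a single $w$ so that every occurrence of $w$ in $\phi(e_n)$ is paired with a $v$ to produce the generator $vw$ of $Q$.
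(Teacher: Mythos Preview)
Your proof is correct and follows essentially the same approach as the paper: reduce to checking that generators of $B$ lie in $Q$. The paper's version is slightly more economical, observing only that $\phi(e_1)=u$ and $\phi(e_2)=(u-w)v=uv-vw$ lie in $Q$ and then invoking Lemma~\ref{lem:present}(a) directly, whereas you verify $\phi(e_n)\in Q$ for all $n\geq 1$; but the idea is the same and your computation for general $n$ is fine.
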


\begin{proof}
We get that $\phi(e_1) = u$ and $\phi(e_2) = (u-w)v = uv-vw$ are in $Q$.  By Lemma~\ref{lem:present}(a), $B$ is generated by these elements, so we are done.
\end{proof}


\section{The kernel and image of the maps $\lambda_a$}\label{LAMBDA}

The goal of this section is to analyze the maps $\lambda_a$ from 
Definition~\ref{idef:maps}, which are well-defined by Lemma~\ref{lem:homom}.  In particular, we verify Theorem~\ref{ithm2}(a,b). 

To proceed, recall
Notations~\ref{inot0} and~\ref{not1}.
We first compute the factor rings $A(a)$, proving  Theorem~\ref{ithm2}(b).

\begin{proposition} \label{prop:(c)}
We have that $A(0) = \kk + uR$ \lp a right idealizer in $R$\rp, that $A(1) = \kk + Ru$ \lp a left idealizer in $R$\rp, and that $A(a)_{\geq 4} = R_{\geq 4}$  if $a \neq 0,1$. For all  $a$, the ring $A(a)$ is noetherian.
\end{proposition}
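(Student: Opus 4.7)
The plan is to rewrite each $\lambda_a(e_n)$ in a convenient normal form and then handle the three cases $a=0$, $a=1$, and $a \neq 0, 1$ separately, before addressing noetherianity. Iterating $uv = vu + v^2$ in $R$ gives $u v^{k} = v^{k} u + k\, v^{k+1}$, and so
\[ \lambda_a(e_n) \;=\; u v^{n-1} - (n-1) a\, v^n \;=\; v^{n-1} u + (n-1)(1-a)\, v^n. \]
In particular $\lambda_0(e_n) = u v^{n-1} \in uR$ and $\lambda_1(e_n) = v^{n-1} u \in Ru$. Both $\kk + uR$ and $\kk + Ru$ are subalgebras of $R$, by $(ur)(us) = u(rus) \in uR$ and the symmetric identity.

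For $a = 0$, the inclusion $A(0) \subseteq \kk + uR$ is immediate from the generators $\lambda_0(e_1) = u$ and $\lambda_0(e_2) = uv$. For the reverse, I would show by induction on $i \geq 1$ that $u^i v^j \in A(0)$ for all $j \geq 0$: the base case is $u v^j = \lambda_0(e_{j+1})$, and the step $u^{i+1} v^j = u \cdot (u^i v^j)$ stays in $A(0)$ because $A(0)$ is a subalgebra. These monomials span $uR$, giving $A(0) = \kk + uR$. The case $a = 1$ is entirely symmetric, multiplying by $u$ on the right instead.

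For $a \neq 0, 1$, write $c := 1 - a$, so $c \neq 0$ and $c \neq 1$. I would prove $A(a)_N = R_N$ for all $N \geq 4$ by induction on $N$, using the basis $\{v^j u^{N-j}\}_{0 \leq j \leq N}$ of $R_N$. For the base case $N = 4$, compute the three products $\lambda_a(e_n)\lambda_a(e_m)$ with $n + m = 4$ and $n, m \geq 1$: each lies in the subspace $\kk\{v^2 u^2,\, v^3 u,\, v^4\}$, and a short calculation using the normal form shows the $3 \times 3$ matrix of their coefficient vectors has determinant $2c(c-1) \neq 0$. Hence $\{v^2 u^2, v^3 u, v^4\} \subseteq A(a)_4$, and adjoining $u^4 = \lambda_a(e_1)^4$ together with $v u^3 = \lambda_a(e_2)\, u^2 - c\, v^2 u^2$ fills out $R_4$. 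For the inductive step, the hypothesis $A(a)_{N-1} = R_{N-1}$ gives $R_{N-1} \cdot u \subseteq A(a)_N$, which produces every basis vector except $v^N$; that last vector is recovered as $((N-1)c)^{-1}\bigl(\lambda_a(e_N) - v^{N-1} u\bigr)$.

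Finally, for noetherianity, when $a \neq 0, 1$ the inclusion $A(a) \hookrightarrow R$ has finite codimension, and a standard graded Artin--Tate-style lemma --- of the sort the authors record in their appendix --- gives that a finite-codimension graded subalgebra of a connected graded noetherian ring is itself noetherian. The cases $a = 0$ and $a = 1$ are more delicate. A short calculation with the vector-space decomposition $R = \kk[v] \oplus uR$ identifies $A(0) = \{r \in R : ru \in uR\}$ as the idealizer of $uR$ in $R$, and symmetrically $A(1)$ as the idealizer of $Ru$; one also verifies $R = A(0) + v A(0)$ as right $A(0)$-modules (and the analogous left identity for $A(1)$). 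Noetherianity would then follow from the ring-theoretic result the paper's introduction mentions as being recorded in the appendix. I expect this final step to be the main obstacle: the naive Robson-type idealizer criterion would require $R/uR \cong \kk[v]$ to be finitely generated as a module over $A(0)/uR \cong \kk$, which fails, so the appendix result must be a more refined statement tailored to this situation.
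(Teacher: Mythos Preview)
Your arguments for the structural claims are correct and broadly parallel to the paper's, though with a slightly different degree-$4$ computation: the paper checks directly that the five products $\lambda_a(e_1^4),\lambda_a(e_1^2e_2),\lambda_a(e_1e_2e_1),\lambda_a(e_2e_1^2),\lambda_a(e_2^2)$ span $R_4$ (a $5\times 5$ check done in the Zhang-twist coordinates), and then propagates using $uR_n+R_nu=R_{n+1}$, whereas you use the three products $\lambda_a(e_n)\lambda_a(e_m)$ with $n+m=4$ (a $3\times 3$ determinant), fill in $u^4$ and $vu^3$ by hand, and run an explicit induction. Both work; yours is arguably tidier.

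The genuine gap is in the noetherianity step, and your own misgivings are on target. The appendix does \emph{not} contain the result you are hoping for: the ``known result to which we could not find a reference'' recorded there (Proposition~\ref{propappendix}) is about reading off defining relations from a free resolution of $\kk_A$, and has nothing to do with noetherianity of subalgebras. For $a\neq 0,1$ the paper cites \cite[Lemma~1.4]{Stafford85} (a graded subalgebra agreeing with a noetherian one in large degree is noetherian), which matches your intuition but is an external citation, not an appendix lemma. For $a=0,1$ the paper does not attempt a direct argument at all; it invokes \cite[Lemma~2.2(iii) and Theorem~2.3(i.a)]{SZ}, results of Stafford--Zhang on idealizers in noncommutative projective geometry. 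You are right that the naive Robson criterion fails here (since $R/uR\cong\kk[v]$ is infinite-dimensional over $\kk$), so a more refined input really is needed --- it just comes from \cite{SZ}, not from anything internal to the paper.
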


\begin{proof}
Recall from Lemma~\ref{lem:present}(a) that $U(W_+)$ is generated by $e_1$ and $e_2$. We have that $\lambda_0(e_1) = u$ and $\lambda_0(e_2) = uv$.
These elements generate $\kk + uR$. 
Moreover, $\lambda_1(e_1) = u$ and $\lambda_1(e_2) = (u-v)v = vu$, and these elements generate $\kk + Ru$. That the rings $A(0)$ and $A(1)$ are noetherian follows from \cite[Lemma~2.2(iii) and Theorem~2.3(i.a)]{SZ}.

When $a \neq 0,1$, we must show that $R_{\geq 4} \subseteq A(a)$. 
Since $u R_n + R_n u = R_{n+1}$ for $n \geq 1$ and since $\dim_{\kk}R_4=5$, the proof boils  down to showing that the set of elements
$$\lambda_a(e_1^4), \quad \lambda_a(e_1^2e_2), \quad \lambda_a(e_1e_2e_1), \quad \lambda_a(e_2e_1^2), \quad \lambda_a(e_2^2),$$
is $\kk$-linearly independent, for $a \neq 0,1$. Using Lemma~\ref{Zhang} and \eqref{multiplyS}, consider the following calculations,
\[
\begin{array}{lllll}
\lambda_a(e_1^4) &= u^4 &= xx^{\mu}x^{\mu^2}x^{\mu^3} &= x(x-y)(x-2y)(x-3y) &=:r_1,\\
\lambda_a(e_1^2e_2) &= u^2(u-av)v &= xx^{\mu}(x-ay)^{\mu^2}y^{\mu^3} &= x(x-y)(x-(2+a)y)y &=:r_2,\\
\lambda_a(e_1e_2e_1) &= u(u-av)vu &= x(x-ay)^{\mu}y^{\mu^2}x^{\mu^3} &= x(x-(1+a)y)y(x-3y) &=:r_3,\\
\lambda_a(e_2e_1^2) &= (u-av)vu^2 &= (x-ay)y^{\mu}x^{\mu^2}x^{\mu^3} &= (x-ay)y(x-2y)(x-3y) &=:r_4,\\
\lambda_a(e_2^2) &= (u-av)v(u-av)v &= (x-ay)y^{\mu}(x-ay)^{\mu^2}y^{\mu^3} &= (x-ay)y(x-(2+a)y)y&=:r_5. 
\end{array}
\]
By direct computation, we see that $r_1, \dots, r_5$ are linearly independent if $a \neq 0, 1$. 

Further, since $A(a)$ and $R$ are equal in large degree and $R$ is noetherian, $A(a)$ is noetherian by \cite[Lemma~1.4]{Stafford85}.  
\end{proof}

Our next goal is to compute the kernels of the maps $\lambda_a$ and establish Theorem~\ref{ithm2}(a).  
We will use the following notation:

\begin{notation}[$\pi$, $\pi_a$,  $\pi_B$] \label{not2} 
Let $\kk \langle t_1, t_2 \rangle$ be the free algebra, which we grade by setting $\deg t_i = i$.
 We set the notation below:
\begin{itemize}
\item $\pi: \kk \langle t_1,t_2 \rangle \to U(W_+)$ is   the algebra map given by $\pi(t_1) = e_1$ and $\pi(t_2) = e_2$;
\item $\pi_a: \kk \langle t_1,t_2 \rangle \to R$ is  the algebra map given by $\pi_{a}(t_1) = \lambda_a(e_1) = u$ and $\pi_{a}(t_2) = \lambda_a(e_2) = (u-av)v$, for $a \in \kk$.  The image of $\pi_{a}$ is $A(a)$.  Note that $\pi_{a} = \lambda_a \circ \pi$.
\item $\pi_B: \kk \ang{t_1,t_2} \to S$ is  the algebra map given by $\pi_B(t_1) =\phi(e_1)= u$ and $\pi_B(t_2) =\phi(e_2)= uv-vw$.  \linebreak The image of $\pi_B$ is $B$.  Note that $\pi_B = \phi \circ \pi$.
\end{itemize}
\end{notation}

In the next result, we   compute a presentation of  the algebra $A(0)$.

\begin{lemma} \label{towards(d)}
The kernel  of $\pi_0$ is generated by
\[
\begin{array}{l}
q:= t_1^2t_2-t_2t_1^2-2t_2^2,\\
q':= t_1^3t_2 - 3t_1^2t_2t_1+3t_1t_2t_1^2-t_2t_1^3+6t_2^2t_1-12t_2t_1t_2+6t_1t_2^2
\end{array}
\]
as a two-sided ideal.
\end{lemma}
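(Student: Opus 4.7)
The strategy has two parts: the forward inclusion $(q, q') \subseteq \ker \pi_0$, followed by a Hilbert-series argument to deduce equality.

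To establish $q, q' \in \ker \pi_0$, first observe that $q' = \ad(t_1)^3(t_2) + 6\,\ad(t_2)^2(t_1)$, which is exactly the first Serre-type relation of $U(W_+)$ recorded in Lemma~\ref{lem:present}(a). Hence $\pi(q') = 0$ in $U(W_+)$ and so $\pi_0(q') = \lambda_0(\pi(q')) = 0$ automatically. For $q$, I would compute directly: $\pi_0(q) = u^2(uv) - (uv)u^2 - 2(uv)^2$, and reduce in $R$ using the defining relation $uv = vu + v^2$ (equivalently, the Zhang-twist description of Lemma~\ref{Zhang}); a short calculation shows this vanishes. This yields a surjection $\bar\pi_0 : \bar F := \kk\langle t_1, t_2\rangle/(q, q') \twoheadrightarrow A(0)$.

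To show injectivity of $\bar\pi_0$, the plan is to exhibit a $\kk$-spanning set for $\bar F$ of size at most $\dim_\kk A(0)_n$ in each degree $n$. By Proposition~\ref{prop:(c)}, together with the fact that $u$ is a non-zero-divisor in $R$, $\dim_\kk A(0)_n = n$ for $n \geq 1$ (and $1$ for $n=0$), so such a bound forces $\bar\pi_0$ to be an isomorphism. The candidate spanning set is
\[
\{\,\bar t_1^{\,c} : c \geq 0\,\} \;\cup\; \{\,\bar t_1^{\,a}\bar t_2 \bar t_1^{\,b} : a, b \geq 0\,\},
\]
which has exactly $n$ elements in each degree $n \geq 1$. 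To verify that it spans, I would induct on the number of occurrences of $t_2$ in a monomial: any monomial with at least two $t_2$'s contains either a subword $t_2^2$ or a subword $t_2 t_1^k t_2$ for some $k \geq 1$. A $t_2^2$ subfactor is rewritten using $q$ as $\tfrac{1}{2}(t_1^2 t_2 - t_2 t_1^2)$, strictly reducing the $t_2$-count. For $t_2 t_1 t_2$, the relation $q'$ is essential: combining $q' \equiv 0$ with the reductions $6 t_2^2 t_1 \equiv 3(t_1^2 t_2 t_1 - t_2 t_1^3)$ and $6 t_1 t_2^2 \equiv 3(t_1^3 t_2 - t_1 t_2 t_1^2)$ (both derived from $q$) yields $t_2 t_1 t_2 \equiv \tfrac{1}{3}(t_1^3 t_2 - t_2 t_1^3) \pmod{(q, q')}$. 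For longer separators $t_2 t_1^k t_2$ with $k \geq 2$, translates of $q$ alone (such as $t_2 q t_1^{k-2}$ and $t_1^{k-2} q t_2$) should suffice, combined with iteration.

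The main obstacle is the inductive combinatorial verification that this reduction process consistently terminates and that the proposed spanning set really has size $n$ in each degree $n$. The cleanest rigorous verification would pass through a noncommutative Gr\"obner basis computation for the ideal $(q, q')$, confirming that $\hilb \bar F = 1 + t/(1-t)^2 = \hilb A(0)$; such a computation can be carried out by machine, in the spirit of the computational routines the authors employ elsewhere (see the appendix). Once the two Hilbert series match, surjectivity of $\bar\pi_0$ immediately upgrades it to an isomorphism, completing the proof.
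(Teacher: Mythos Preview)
Your overall strategy---produce a surjection $\bar\pi_0:\bar F=\kk\langle t_1,t_2\rangle/(q,q')\to A(0)$ and then match Hilbert series---is sound and genuinely different from the paper's. However, your reduction argument contains an actual error, not merely an incomplete step. You assert that for $k\geq 2$ the word $t_2 t_1^k t_2$ can be reduced to single-$t_2$ words using translates of $q$ alone. This is false already for $k=2$: in degree~$6$ the free algebra has dimension $13$, and the ideal $(q)$ contributes at most $5$ linearly independent relations (namely $t_1^2 q,\,t_1 q t_1,\,q t_1^2,\,t_2 q,\,q t_2$), so $\dim\bigl(\kk\langle t_1,t_2\rangle/(q)\bigr)_6\geq 8$, whereas there are only $6$ single-$t_2$ words in degree~$6$. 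Concretely, if you try to reduce $t_2 t_1^2 t_2$ via $t_2 q$ and then reduce the resulting $t_2^3$ via $q$, you recover a tautology. The relation $q'$ is essential at every step, not just at $k=1$; without tracking its contribution carefully your induction does not close. Your Gr\"obner-basis fallback would work in principle, but the verification is not carried out and is not entirely routine (the basis is not finite for any obvious monomial order).

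The paper sidesteps this combinatorics entirely. Rather than bounding $\dim\bar F_n$ from above, it reads off generators of $\ker\pi_0$ from a minimal free resolution of the trivial $A(0)$-module~$\kk$: one computes the syzygy module $K=\ker\bigl(A(0)[-1]\oplus A(0)[-2]\xrightarrow{(u,uv)}A(0)\bigr)$ directly inside the well-understood ring $R$, finds it is generated by two explicit elements, and then invokes the standard dictionary (Proposition~\ref{propappendix}) translating syzygy generators into relation generators. The key computation is that $uA(0)\cap uv\,A(0)=u^3vR=u^3vA(0)+u^3v^2A(0)$, which is a short calculation in $R$. This approach trades your combinatorial rewriting in the free algebra for concrete linear algebra in $R$, where one has a PBW-type basis to work with; that is what makes it tractable.
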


\begin{proof} 
Let $A = A(0)$, and consider the exact sequence of right $A$-modules:
\[
\xymatrix{0 \ar[r] & K \ar[r]  & A[-1] \oplus A[-2] \ar[r]^(0.7){(u,uv)}   & A \ar[r]  & \kk \ar[r]  & 0.}
\] 
We make the following claim:
\medskip

\noindent {\bf Claim.} As a right $A$-module, $K$ is generated by $(u^2v, -u(u+2v))$ and $(u^2v^2, -u(u+2v)v)$.
\medskip

Assume the claim.   
It is well-known that one may deduce generators and relations of a connected graded $\kk$-algebra from the first few terms in a minimal resolution of the trivial module $\kk$.  
The precise method is given in Proposition~\ref{propappendix} in the appendix.  Using the notation of that result: take $b^1_1 = u^2v$, $b^1_2 = -u(u+2v)$, $b^2_1 = u^2v^2$, and $b^2_2 = -u(u+2v)v$. 
Moreover, take $\wt{b}^1_1 = t_1t_2$, $\wt{b}^1_2 = -t_1^2-2t_2$, $\wt{b}^2_1 = t_1^2t_2-t_1t_2t_1$, and $\wt{b}^2_2 = 2t_2t_1-3t_1t_2$.
Note that 
 $\pi_0(\wt{b}^i_j) = b^i_j$ for $i,j=1,2$. Now  we obtain by Proposition~\ref{propappendix} that
\[
\begin{array}{ll}
q_1:= t_1(\wt{b}^1_1) + t_2(\wt{b}^1_2) &= t_1^2t_2-t_2t_1^2-2t_2^2 \\
q_2:= t_1(\wt{b}^2_1) + t_2(\wt{b}^2_2) &= t_1^3t_2-t_1^2t_2t_1 + 2t_2^2t_1-3t_2t_1t_2
\end{array}
\]
generate $\ker \pi_0$. Observe that $q=q_1$ and that
$$q' - 4q_2 ~=~ -3t_1^3t_2+t_1^2t_2t_1 + 3t_1t_2t_1^2 -t_2t_1^3 -2t_2^2t_1 +6t_1t_2^2 ~=~ -3t_1q +qt_1 ~\in (q).$$
Thus, $\ker \pi_0$ is generated by $q$ and $q'$, as desired.

So it remains to prove the claim.
\medskip

\noindent {\it Proof of Claim.} Note that there is an isomorphism of graded right $A$-modules $\beta: uA \cap uvA \to K$ given by $\beta(r) = (u^{-1}r, -(uv)^{-1}r)$. 

Take $M:=A \cap vA$.  Since $A = \kk + uR$, it is easy to show that $M=uR \cap vuR$, and in particular, is a right $R$-module. 
Since $(uR + vu R)_{\geq 2} = R_{\geq 2}$, we get that $\dim_\kk M_n = \dim_\kk R_{n-1} + \dim_\kk R_{n-2} - \dim_\kk R_n = n-2$ for $n \geq 2$, and $\dim_\kk M_n = 0$ for $n< 2$.  
Moreover, $u^2 v = vu(u+2v) \in M$, so $u^2vR \subseteq M$ and $\hilb (u^2vR) = \hilb M$. So, $M = u^2v R$.
Now 
$$ uA \cap uvA = uM ~=~ u^3vR \overset{(*)}{=} u^3vA+u^3v^2A ~=~ uvu(u+2v)A + uvu(u+2v)vA,$$ 
where the equality $(*)$ holds as $R = A + vA$. Apply the map $\beta$ to the right-hand-side of the equation above to yield the desired result.
\end{proof}

We can now understand $\ker \lambda_0$ and $\ker \lambda_1$.  
We first prove that:

\begin{lemma}\label{lem1}
We have $\ker {\lambda}_0 = \ker {\lambda}_1$.
\end{lemma}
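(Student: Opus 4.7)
My plan is to exhibit an anti-automorphism symmetry between $\lambda_0$ and $\lambda_1$, and then verify that the generator of $\ker \lambda_0$ is invariant under it. First I would construct a graded algebra anti-automorphism $\theta$ of the Jordan plane $R$ fixing $u$ and sending $v \mapsto -v$, together with a graded anti-automorphism $\sigma$ of $U(W_+)$ sending $e_n \mapsto (-1)^{n+1} e_n$. Both are involutions: $\theta$ preserves the Jordan plane relation after order reversal, and the sign pattern $c_n = (-1)^{n+1}$ satisfies the anti-automorphism compatibility $c_{n+m} = -c_n c_m$ required by the Witt bracket $[e_n, e_m] = (m-n) e_{n+m}$.

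The key step is then to establish the identity $\theta \circ \lambda_0 = \lambda_1 \circ \sigma$ as anti-homomorphisms $U(W_+) \to R$. The crux is the observation $\lambda_1(e_n) = v^{n-1} u$, which follows from the relation $u v^{n-1} = v^{n-1} u + (n-1) v^n$ in $R$ (proved by induction from $uv - vu = v^2$). With this, both sides of the desired identity reduce on each $e_n$ to $(-1)^{n-1} v^{n-1} u$, and since both sides are anti-homomorphisms agreeing on the Lie algebra generators $e_1, e_2$ of $U(W_+)$ (cf.\ Lemma~\ref{lem:present}(a)), the identity holds throughout. Taking kernels immediately yields $\sigma(\ker \lambda_0) = \ker \lambda_1$.

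To finish, I would show $\sigma(\ker \lambda_0) = \ker \lambda_0$. By Lemma~\ref{towards(d)}, $\ker \pi_0$ is generated in the free algebra by $q$ and $q'$. A direct rewriting shows that $q' = [t_1,[t_1,[t_1, t_2]]] + 6[t_2, [t_2, t_1]]$ is one of the defining relations of $U(W_+)$ from Lemma~\ref{lem:present}(a), so $\pi(q') = 0$. Simplifying $\pi(q)$ via $[e_1,e_2] = e_3$ and $[e_1, e_3] = 2 e_4$ identifies $\ker \lambda_0$ with the two-sided ideal $(e_1 e_3 - e_2^2 - e_4)$. A short computation using $e_3 e_1 = e_1 e_3 - 2 e_4$ shows that $\sigma(e_1 e_3 - e_2^2 - e_4) = e_3 e_1 - e_2^2 + e_4 = e_1 e_3 - e_2^2 - e_4$, so $\sigma$ fixes this generator and hence the entire two-sided ideal. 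Combining with the previous paragraph gives $\ker \lambda_1 = \sigma(\ker \lambda_0) = \ker \lambda_0$.

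The main obstacle is simply the careful sign bookkeeping needed to confirm that $\theta$ and $\sigma$ really are anti-automorphisms (not automorphisms), and that the identity $\theta \circ \lambda_0 = \lambda_1 \circ \sigma$ is correctly treated as an equality of anti-homomorphisms throughout. Once the symmetry is set up, the explicit form of the generator of $\ker \lambda_0$ from Lemma~\ref{towards(d)} and its invariance under $\sigma$ drop out essentially at once.
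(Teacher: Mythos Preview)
Your argument is correct, but it takes a considerably longer and more indirect route than the paper's proof. The paper simply observes that, working in the quotient division ring of $R$, one has $u^{-1}\lambda_0(e_n)u = v^{n-1}u = \lambda_1(e_n)$ for every $n$; hence $\lambda_1(f) = u^{-1}\lambda_0(f)u$ for all $f\in U(W_+)$, and the equality of kernels is immediate. This is a one-line conjugation argument that uses nothing about the structure of $\ker\lambda_0$.

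By contrast, your approach builds an anti-automorphism $\theta$ of $R$ and an anti-automorphism $\sigma$ of $U(W_+)$, establishes the intertwining $\theta\circ\lambda_0 = \lambda_1\circ\sigma$, and then must show $\sigma(\ker\lambda_0)=\ker\lambda_0$. For that last step you are forced to compute $\ker\lambda_0 = (e_1e_3 - e_2^2 - e_4)$ explicitly from Lemma~\ref{towards(d)} and check that $\sigma$ fixes the generator --- which is essentially the $a=0$ content of Proposition~\ref{prop:(d)}, a result that in the paper's organization is proved \emph{after} the present lemma. There is no circularity (the $a=0$ case of Proposition~\ref{prop:(d)} does not use Lemma~\ref{lem1}), but your proof front-loads that computation. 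The payoff is that you never leave $R$ for its division ring; the cost is a substantially longer argument that depends on already knowing a generator of $\ker\lambda_0$. The paper's conjugation trick avoids this entirely and would still work even if $\ker\lambda_0$ were not principal.
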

\begin{proof}
Working in the quotient division ring of $R$, we have:  $u^{-1}{ \lambda}_0(e_n) u = v^{n-1} u = {\lambda}_1(e_n)$.
So for any $f \in U(W_+)$, we have ${\lambda}_1(f) = u^{-1}{ \lambda}_0(f) u$.  The result follows. 
\end{proof}

\begin{proposition} \label{prop:(d)}
We have that $\ker \lambda_a = \lp e_1 e_3-e_2^2-e_4 \rp$ for $a =0,1$.
\end{proposition}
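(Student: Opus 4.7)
The plan is to reduce to the case $a=0$ using Lemma~\ref{lem1} and then transfer the presentation of $\ker \pi_0$ obtained in Lemma~\ref{towards(d)} to $U(W_+)$ via the surjection $\pi$. Since $\pi_0 = \lambda_0 \circ \pi$, we will have $\ker \lambda_0 = \pi(\ker \pi_0) = (\pi(q), \pi(q'))$ as a two-sided ideal of $U(W_+)$, so the task becomes computing $\pi(q)$ and $\pi(q')$ explicitly and showing that together they generate the same ideal as $\xi := e_1 e_3 - e_2^2 - e_4$.

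The first computation is routine: using the Witt bracket $[e_n, e_m] = (m-n) e_{n+m}$ (in particular $e_2 e_1 = e_1 e_2 - e_3$ and $e_3 e_1 = e_1 e_3 - 2 e_4$), one finds $\pi(q) = e_1^2 e_2 - e_2 e_1^2 - 2 e_2^2 = 2\xi$. This at once establishes $(\xi) \subseteq \ker \lambda_0$ and reduces the theorem to proving $\pi(q') \in (\xi)$.

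The main obstacle is this second containment. I would exploit the identity $q' = 4 q_2 - 3 t_1 q + q t_1$ noted in the proof of Lemma~\ref{towards(d)}: modulo $(\pi(q)) = (\xi)$, this reduces the question to showing $\pi(q_2) \in (\xi)$ for $q_2 = t_1^3 t_2 - t_1^2 t_2 t_1 + 2 t_2^2 t_1 - 3 t_2 t_1 t_2$. The plan is to expand $\pi(q_2)$ using the Witt relations to rewrite it in terms of the degree-$5$ PBW monomials $e_1 e_4$, $e_2 e_3$, $e_5$, and so on; in parallel, compute $e_1 \xi - \xi e_1 = 2(e_1 e_4 - e_2 e_3 - e_5)$, and combine the two to produce an identity of the shape $\pi(q_2) = \tfrac{1}{2}(3 e_1 \xi - \xi e_1)$, which lies in $(\xi)$ by inspection.

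Combining these pieces gives $\ker \lambda_0 = (\pi(q), \pi(q')) = (\xi)$, and Lemma~\ref{lem1} extends the conclusion to $a=1$.
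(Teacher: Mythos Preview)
Your argument is correct and follows the same overall strategy as the paper: use $\pi_0=\lambda_0\circ\pi$ together with Lemma~\ref{towards(d)} to get $\ker\lambda_0=(\pi(q),\pi(q'))$, compute $\pi(q)=2(e_1e_3-e_2^2-e_4)$, and finish with Lemma~\ref{lem1} for $a=1$.

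The one place where you work harder than necessary is the treatment of $\pi(q')$. Observe that
\[
q' \;=\; t_1^3t_2 - 3t_1^2t_2t_1 + 3t_1t_2t_1^2 - t_2t_1^3 + 6t_2^2t_1 - 12t_2t_1t_2 + 6t_1t_2^2
\;=\; [t_1,[t_1,[t_1,t_2]]] + 6[t_2,[t_2,t_1]],
\]
which is precisely the first defining relation of $U(W_+)$ in Lemma~\ref{lem:present}(a). Hence $\pi(q')=0$ outright, and there is no need to pass through $q_2$ or the identity $\pi(q_2)=\tfrac12(3e_1\xi-\xi e_1)$. (Your computation is nonetheless valid: carrying it through one more step gives $\pi(q')=4\pi(q_2)-6e_1\xi+2\xi e_1=0$, consistent with the above.) Recognising $q'$ as a relation of $U(W_+)$ is what the paper does, and it shortens this step to a single line.
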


\begin{proof}
We first check that $e_1 e_3-e_2^2-e_4$ is indeed in $ \ker \lambda_0$ as follows:
\[
\lambda_0(e_1 e_3-e_2^2-e_4) = u(uv^2) - (uv)(uv) - uv^3                                                    =~ u^2v^2 - u(uv-v^2)v - uv^3 ~=~ 0.
\]

Recall that $\pi_0 = \lambda_0 \circ \pi$. 
So, Lemma~\ref{towards(d)} implies that $\ker \lambda_0 = \pi(\ker \pi_0)$ is generated by elements $\pi(q)$ and $\pi(q')$ in $U(W_+)$. Now $\pi(q') = 0$ by Lemma~\ref{lem:present}(a), so $\ker \lambda_0$ is generated by $\pi(q)$. 
Moreover, 
$$\pi(q) = e_1^2e_2-e_2e_1^2 -2e_2^2 = 2\left(e_1(e_1e_2-e_2e_1)-e_2^2-\left(\frac{1}{2}e_1^2e_2-e_1e_2e_1+\frac{1}{2}e_2e_1^2\right)\right)= 2(e_1 e_3-e_2^2-e_4),$$ 
using the relation $[e_n,e_m] = (m-n)e_{n+m}$ in $U(W_+)$. Thus, $\ker \lambda_0 = (e_1 e_3-e_2^2-e_4)$, as claimed. 

The result for $a=1$ now follows by Lemma~\ref{lem1}.
\end{proof}

It remains to analyze $\ker \lambda_a$ with $a \neq 0,1$. We do this in the next two results.

\begin{lemma} \label{lem:lambdaa}
For $a \neq 0,1$,  the kernel of $\lambda_a$ is generated in degrees 5 and 6. 

\end{lemma}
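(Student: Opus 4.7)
My plan is to parallel the method of Lemma~\ref{towards(d)}: compute the beginning of a minimal free resolution of the trivial right $A(a)$-module $\kk$, locate minimal generators of the first syzygy module $K$, translate these into generators of $\ker \pi_a \subseteq \kk\langle t_1, t_2\rangle$ via Proposition~\ref{propappendix}, and finally project via $\pi$ to deduce the claim for $\ker \lambda_a$.

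The first ingredient is a Hilbert series computation. By Proposition~\ref{prop:(c)}, $A(a)_{\geq 4} = R_{\geq 4}$, so $\dim A(a)_n = n+1$ for $n \geq 4$; expanding $\lambda_a(e_1^ie_2^j)$ in the Zhang twist (Lemma~\ref{Zhang}), one checks $\dim A(a)_n = 1,1,2,3$ for $n=0,1,2,3$. Since $A(a)$ is generated by $u$ and $s:=(u-av)v$, there is an exact sequence of graded right $A(a)$-modules
\[
0 \to K \to A(a)[-1] \oplus A(a)[-2] \xrightarrow{(u,\,s)} A(a) \to \kk \to 0.
\]
Additivity gives $\hilb K = (t+t^2-1)\hilb A(a) + 1$: the coefficients vanish for $n\leq 4$, equal $2$ in degree $5$, equal $4$ in degree $6$, and equal $n-2$ for $n \geq 6$.

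Next I produce explicit generators of $K$. Since $R$ is a domain and $s = uv - av^2$, a pair $(f,g)$ lies in $K$ exactly when $av^2g \in uR$; for $a \neq 0$, a direct Zhang-twist computation gives $\{g \in R : v^2 g \in uR\} = (x+2y)R$. In degrees $n \geq 7$, where $A(a)_{n-1} = R_{n-1}$ and $A(a)_{n-2} = R_{n-2}$, the $g$-projection yields a vector space isomorphism $K_n \cong (x+2y)R_{n-3}$. I will exhibit two elements $b^1, b^2 \in K_5$ whose $g$-components form a basis of the $2$-dimensional intersection $A(a)_3 \cap (x+2y)R_2$, and two additional elements $b^3, b^4 \in K_6$ whose $g$-components, together with those of $b^1u$ and $b^2u$, span $(x+2y)R_3 \cong K_6$. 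Generation in degrees $\geq 7$ then reduces (via the $g$-projection and the identification $A(a)_m = R_m$ for $m \geq 4$) to showing that the $g$-components of the $b^i$ generate the right $R$-submodule $(x+2y)R_{\geq 3}$; with the $b^i$ chosen judiciously this is straightforward.

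Finally, by Proposition~\ref{propappendix} the generators $b^1, \ldots, b^4$ of $K$ yield generators $q_1, q_2 \in \ker \pi_a$ of degree $5$ and $q_3, q_4 \in \ker \pi_a$ of degree $6$. Since $\pi_a = \lambda_a \circ \pi$, we have $\ker \lambda_a = \pi(\ker \pi_a)$, so $\ker \lambda_a$ is generated as a two-sided ideal in $U(W_+)$ by $\pi(q_1), \ldots, \pi(q_4)$, all in degrees $5$ or $6$ (some may be zero, which is consistent with the dimension counts $\dim(\ker \lambda_a)_5 = 1$ and $\dim(\ker \lambda_a)_6 = 4$). The main obstacle is choosing the $b^i$ carefully enough to guarantee the generation claim in all higher degrees: Hilbert series alone only pin down the correct degrees, and explicit identification of the $b^i$ in the Zhang twist, possibly assisted by Macaulay2 as elsewhere in the paper, is needed to certify that the submodule $\sum_i b^i A(a)$ really exhausts $K_n$ for every $n \geq 7$.
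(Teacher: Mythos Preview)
Your approach is essentially the paper's: reduce to showing the syzygy module $K$ (equivalently $J := uA' \cap sA'$, where $s = (u-av)v$) is generated in degrees $5$ and $6$, then invoke Proposition~\ref{propappendix}. Two organizational differences are worth noting. First, the paper never names explicit generators $b^i$; it simply sets $J' := J_5A' + J_6A'$ and proves $J' = J$ by induction on degree. Second, the paper observes that $J \subseteq L := rR$ with $r = s(u+2v)$ principal over $R$ (this is your $(x+2y)$, up to the isomorphism $K \cong J$), and that $J_n = L_n = rR_{n-3}$ for all $n \geq 6$. The inductive step then reads
\[
J'_{n+1} \;\supseteq\; J'_nu + J'_{n-1}s \;=\; r\bigl(R_{n-3}u + R_{n-4}s\bigr) \;=\; rR_{n-2} \;=\; J_{n+1},
\]
where the penultimate equality uses $a \neq 1$. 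The only computational input is the single base case $n=7$, namely that $J_5A'_2 \not\subseteq J_6A'_1$.

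Your line ``generation in degrees $\geq 7$ reduces to the $g$-components generating $(x+2y)R_{\geq 3}$ as a right $R$-module'' is not quite correct as stated: for $n \in \{7,8,9\}$ the coefficients acting on your $g_i$ lie in $A'_1, A'_2, A'_3$, which are proper subspaces of $R_1, R_2, R_3$, so $R$-module generation is a strictly weaker condition than the $A'$-module generation you actually need. You acknowledge at the end that low degrees require explicit checking, so this is not a fatal gap, but the paper's induction sidesteps the issue by handling all $n \geq 8$ uniformly and isolating the computation to one degree.
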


\begin{proof}
Take $A':=A(a)$. It suffices to show that the kernel of $\pi_{a}$ is generated in degrees 5 and 6.
Consider the exact sequence of right $A'$-modules:
$$\xymatrix{0 \ar[r] & K \ar[r] & A'[-1] \oplus A'[-2]  \ar[rr]^(0.65){(u,(u-av)v)}&& A' \ar[r] & \kk \ar[r] & 0.}$$ 
We have that $uA' \cap (u-av)vA' \cong K$ as right $A'$-modules.
As in the proof of Lemma~\ref{towards(d)}, it now suffices to show that $uA' \cap (u-av)vA'$ is generated in degrees 5 and 6 as a right $A'$-module.

Let $J:=uA' \cap (u-av)vA'$, and let $L := uR \cap (u-av)vR$. 
Note that $J \subseteq L$. Since $a \neq 0$, we get that $R_{\geq 2} = (uR+(u-av)vR)_{\geq 2}$. 
So, $\dim_\kk L_n = \dim_\kk R_{n-1} + \dim_\kk R_{n-2} - \dim_\kk R_n = n-2$, for $n \geq 2$. So, $\dim_\kk L_3 =1$, and principally generated as a right $R$-module by an element of degree 3. In fact,  
\begin{equation} \label{L=rR}
L = rR, \quad \text{ where } r:=u(uv+(1-a)v^2) = (uv-av^2)(u+2v).
\end{equation}
Since $A'_{\geq 4} = R_{\geq 4}$ by Proposition~\ref{prop:(c)}, we have $J_{\geq 6} = L_{\geq 6}$.  
By direct computation, one obtains that $J_i =0$ for $i=0, \dots, 4$; one can also use Routine~\ref{rout:kernel} in the appendix. 

Let $J' = J_5 A' + J_6 A'$.  
We prove by induction that
$J_n = J'_n$,
for all $n \geq 5$. 
The statement is clear for $n=5,6$. For $n=7$, we make the following assertion, the proof of which is presented in the appendix; see Claim~\ref{aclaim1}.

\medskip

\noindent {\bf Claim.} We have that  $J_5 A'_2 \not \subseteq J_6 A'_1$.

\medskip

\noindent 
So for $n \geq 6$, we have $J_n = L_n = r R_{n-3}$.  So  $\dim_{\kk} J_7 = 5$, and $\dim_{\kk} J_6 A'_1 = \dim_\kk J_6 = 4$.  With the claim, we obtain that $J_7 = J_5 A'_2 + J_6 A'_1$. Thus, $J_7 = J'_7$.
Now for the induction step, suppose we have established that $J'_n = J_n$ and $J'_{n-1} = J_{n-1}$ for some $n \geq 7$.  
Then,
\[
\begin{array}{rlll}
J_{n+1}  \supseteq J'_{n+1} &= J'_n u + J'_{n-1} (u-av)v  &= J_n u + J_{n-1} (u-av)v &\\
&= r(R_{n-3} u + R_{n-4} (u-av)v) &= r R_{n-2} 
 &= J_{n+1}.
\end{array}
\]
The penultimate equality holds as $a \neq 1$. Thus, the lemma is verified.
\end{proof}

\begin{proposition} \label{prop:(d)fora}
If $a \neq 0,1$, then $\ker \lambda_a$ is the ideal generated by the elements
\begin{align*} h_1 & :=e_1e_2^2 - e_1^2e_3 - (2a)e_2e_3 + (1+2a)e_1e_4 - (a^2+a)e_5, \\
h_2 & :=e_1e_5 - 4e_2 e_4+ 3e_3^2+2e_6 \\
h_3 & :=-4e_1^2e_2^2-4e_2^3+4e_1^3e_3+(20a^2+14a-7)e_3^2 - (16a^2+18a+5)e_1e_5+(16a^3+36a^2+16a-2)e_6.
\end{align*}
\end{proposition}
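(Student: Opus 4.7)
\medskip
\noindent\textbf{Proof plan.}
The strategy is to verify that $h_1, h_2, h_3 \in \ker \lambda_a$ directly, and then invoke Lemma~\ref{lem:lambdaa} together with an exact dimension count in degrees $5$ and $6$ to establish the reverse containment. Let $I$ denote the two-sided ideal of $U(W_+)$ generated by $h_1, h_2, h_3$.

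First I would check that $\lambda_a(h_i) = 0$ for $i = 1, 2, 3$. Using Lemma~\ref{Zhang} and the formula $\lambda_a(e_n) = (u - (n-1)av)v^{n-1}$, each $\lambda_a(h_i)$ becomes a polynomial expression in $x, y$ inside $\kk[x, y]^\nu$ that can be verified to vanish either by hand or via a computer algebra routine analogous to Routine~\ref{rout:kernel}. This yields $I \subseteq \ker \lambda_a$.

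For the reverse containment, I would compute Hilbert-series data. From Lemma~\ref{lem:present}(b), $\dim_\kk U(W_+)_n$ equals the number of partitions of $n$, so $\dim_\kk U(W_+)_5 = 7$ and $\dim_\kk U(W_+)_6 = 11$. From Proposition~\ref{prop:(c)}, $A(a)_n = R_n$ for $n \geq 4$, giving $\dim_\kk A(a)_5 = 6$ and $\dim_\kk A(a)_6 = 7$. Hence $\dim_\kk (\ker \lambda_a)_5 = 1$ and $\dim_\kk (\ker \lambda_a)_6 = 4$. In degree $5$, a direct inspection of the PBW expansion shows $h_1 \neq 0$ (the PBW-basis monomial $e_1 e_2^2$ appears with coefficient $1$, and all other terms are distinct PBW-basis elements), so $I_5 = \kk h_1 = (\ker \lambda_a)_5$. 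In degree $6$, I would verify that the four elements $e_1 h_1$, $h_1 e_1$, $h_2$, $h_3 \in I_6 \subseteq (\ker \lambda_a)_6$ are $\kk$-linearly independent by expanding each in the PBW basis; as $\dim_\kk (\ker \lambda_a)_6 = 4$, they then span it, giving $I_6 = (\ker \lambda_a)_6$. By Lemma~\ref{lem:lambdaa}, $\ker \lambda_a$ is generated in degrees $5$ and $6$, so $I = \ker \lambda_a$, as desired.

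The main obstacle is the linear-algebra check in degree $6$: one must carefully track the $a$-dependent coefficients in the PBW expansions of $e_1 h_1$, $h_1 e_1$, $h_2$, $h_3$ inside the $11$-dimensional space $U(W_+)_6$, and verify that the resulting coefficient matrix has rank $4$ for every $a \in \kk \setminus \{0, 1\}$. In particular, one must guard against exceptional values of $a$ at which the matrix might drop rank, which presumably explains the peculiar coefficients appearing in $h_3$. This step is most cleanly handled by a Maple or Macaulay2 computation parallel to the routines already in the paper's appendix.
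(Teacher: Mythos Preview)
Your proposal is correct and follows essentially the same approach as the paper: verify $h_1,h_2,h_3\in\ker\lambda_a$, invoke Lemma~\ref{lem:lambdaa}, determine $\dim_\kk(\ker\lambda_a)_5=1$ and $\dim_\kk(\ker\lambda_a)_6=4$, and then check that $h_1$ and $\{e_1h_1,\,h_1e_1,\,h_2,\,h_3\}$ span these graded pieces via a linear-independence computation (the paper's Claim~\ref{aclaim2}). The one pleasant variation is that you obtain the dimensions $1$ and $4$ from the partition count together with $A(a)_{\geq 4}=R_{\geq 4}$ (Proposition~\ref{prop:(c)}), whereas the paper extracts them from the Maple Routine~\ref{rout:kernel}; your way is more transparent but otherwise the arguments coincide.
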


\begin{proof}
By Lemma~\ref{lem:lambdaa}, we just need to produce linearly independent elements of $\ker \lambda_a$ in degrees 5 and 6. We have by Routine~\ref{rout:kernel} that $\dim_\kk (\ker \lambda_a)_5 =1$ and that we can choose a basis of $(\ker \lambda_a)_5$ to be the element $h_1$. In fact, we
 verify that $\lambda_a(h_1) =0$ using Lemma~\ref{Zhang} and~\eqref{multiplyS}, while suppressing some $\mu$ superscripts:

{\small\[
\begin{array}{rl}
\lambda_a(h_1) &=  u(u-av)v(u-av)v - u^2(u-2av)v^2 - (2a)(u-av)v(u-2av)v^2\\
&\quad  + (1+2a)u (u-3av)v^3 - (a^2+a)(u-4av)v^4\\
&=x(x-ay)^{\mu}y(x-ay)^{\mu^3}y - xx^{\mu}(x-2ay)^{\mu^2}y^2 - (2a)(x-ay)y(x-2ay)^{\mu^2}y^2\\
&\quad  + (1+2a)x (x-3ay)^{\mu}y^3 - (a^2+a)(x-4ay)y^4\\
&=x(x-(1+a)y)y(x-(3+a)y)y - x(x-y)(x-(2+2a)y)y^2 - (2a)(x-ay)y(x-(2+2a)y)y^2\\
&\quad  + (1+2a)x (x-(1+3a)y)y^3 - (a^2+a)(x-4ay)y^4 ~=0.
\end{array}
\]
}

On the other hand, we have by Routine~\ref{rout:kernel} that $\dim_\kk (\ker \lambda_a)_6 =4$ and that we can take a basis of $(\ker \lambda_a)_6$ to be $h_2$, $h_3$ along with 
\begin{align*} 
h_4 & :=4e_2^3-4e_1e_2e_3+(7-4a)e_3^2+(1+4a)e_1e_5+(2-4a-4a^2)e_6, \\
h_5 & :=4e_2^3+(7-14a)e_3^2-4e_1^2e_4+(5+14a)e_1e_5+(2-16a-12a^2)e_6.
\end{align*}

\noindent By direct computation we have:
\[
\begin{array}{rl}
e_1h_1 & \hspace{-.1in}=e_1^2e_2^2 - e_1^3e_3 - (2a)e_1e_2e_3 + (1+2a)e_1^2e_4 - (a^2+a)e_1e_5,\\
h_1e_1 & \hspace{-.1in} =e_1e_2^2e_1 - e_1^2e_3e_1 - (2a)e_2e_3e_1 + (1+2a)e_1e_4e_1- (a^2+a)e_5e_1,\\
&\hspace{-.1in}=
e_1^2e_2^2
 - e_1^3e_3
-(2+2a)e_1e_2e_3
  +(2a)e_3^2
   +(3+2a)e_1^2e_4
  +(4a)e_2e_4
 -(2+7a+a^2)e_1e_5
 +4(a^2+a)e_6.\\
\end{array}
\]
\smallskip

\noindent {\bf Claim.} We have that  $h_2$, $h_3$, $e_1h_1$, $h_1e_1$ are $\kk$-linearly independent  and that 
$$h_4=2a(2a+1)h_2-h_3-(6+4a)e_1h_1+(2+4a)h_1e_1, \quad
h_5=4a^2h_2 -h_3-(4+4a)e_1h_1 +(4a)h_1e_1.$$

\vspace{.1in}

\noindent The proof is presented in the appendix; see Claim~\ref{aclaim2}. Therefore, the result holds.

Now for the reader's convenience, we verify that $\lambda_a(h_i) =0$ for $i=2,3$ using Lemma~\ref{Zhang} and~\eqref{multiplyS}, while suppressing some $\mu$ superscripts:

{\small\[
\begin{array}{rl}
\lambda_a(h_2) &= u(u-4av)v^4 - 4(u-av)v(u-3av)v^3 + 3(u-2av)v^2(u-2av)v^2 + 2(u-5av)v^5\\
&= x(x-4ay)^{\mu}y^4 - 4(x-ay)y(x-3ay)^{\mu^2}y^3 + 3(x-2ay)y^2(x-2ay)^{\mu^3}y^2 + 2(x-5ay)y^5\\
&= x(x-(1+4a)y)y^4 - 4(x-ay)y(x-(2+3a)y)y^3 + 3(x-2ay)y^2(x-(3+2a)y)y^2 + 2(x-5ay)y^5 ~=0;\\\\

\lambda_a(h_3) &= -4u^2(u-av)v(u-av)v-4(u-av)v(u-av)v(u-av)v+4u^3(u-2av)v^2\\
&\quad +(20a^2+14a-7)(u-2av)v^2(u-2av)v^2-(16a^2+18a+5)u(u-4av)v^4\\
&\quad +(16a^3+36a^2+16a-2)(u-5av)v^5\\
&=-4xx^{\mu}(x-ay)^{\mu^2}y(x-ay)^{\mu^4}y-4(x-ay)y(x-ay)^{\mu^2}y(x-ay)^{\mu^4}y+4xx^{\mu}x^{\mu^2}(x-2ay)^{\mu^3}y^2\\
&\quad +(20a^2+14a-7)(x-2ay)y^2(x-2ay)^{\mu^3}y^2-(16a^2+18a+5)x(x-4ay)^{\mu}y^4\\
&\quad +(16a^3+36a^2+16a-2)(x-5ay)y^5\\
&=-4x(x-y)(x-(2+a)y)y(x-(4+a)y)y-4(x-ay)y(x-(2+a)y)y(x-(4+a)y)y\\
&\quad +4x(x-y)(x-2y)(x-(3+2a)y)y^2 +(20a^2+14a-7)(x-2ay)y^2(x-(3+2a)y)y^2\\
&\quad -(16a^2+18a+5)x(x-(1+4a)y)y^4 +(16a^3+36a^2+16a-2)(x-5ay)y^5 ~=0.
\end{array}
\]}
\vspace{-.3in}

\end{proof}

\smallskip


\section{Elementary proofs that $U(W_+)$ and $U(W)$ are  not noetherian}\label{sec:easy}
In this section, we establish the remaining part of Theorem~\ref{ithm1}, that $\ker \lambda_0 = \ker \lambda_1$ is not finitely generated as a left or right ideal of $U(W_+)$.
We also prove Theorem~\ref{ithm:easy2}. 

We first focus on $U(W_+)$.  
Recall the map $\phi:  U(W_+) \twoheadrightarrow B$ from 
Definition~\ref{idef:maps}, and consider 
 Notations~\ref{inot0},~\ref{not:Q},~\ref{not1}, and~\ref{not2} along with the following.

\begin{notation}[$p$, $I$] \label{not3} Let  $p:= \phi(e_1 e_3-e_2^2-e_4)$ be an element of $B$, and let $I:= (p)$ be a two-sided ideal of $B$.  Note that by Proposition~\ref{prop:(d)}, $I = \phi(\ker \lambda_0 )= \pi_B(\ker \pi_0)$.
\end{notation}

We begin by establishing some basic facts about $p$ and $I$.

\begin{lemma} \label{lem:easy} We have the following statements:
\begin{enumerate}
\item $p= v^3w - v^2w^2$, 
\item $p$ is a normal element of $S$ and of $Q$,  and 
\item $I = Qp$.
\end{enumerate}
\end{lemma}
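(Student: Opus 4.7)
For part (a), the plan is a direct computation using the Zhang twist model $S \cong \kk[x,y,z]^\mu$ from Lemma~\ref{Zhang}. Under this identification $\phi(e_n)$ corresponds to the polynomial $(x-(n-1)z)y^{n-1}$, and the multiplication is $\ell \ast \ell' = \ell \cdot \mu^{|\ell|}(\ell')$, where $\mu$ fixes $y,z$ and sends $x \mapsto x-y$. Expanding $\phi(e_1)\phi(e_3) - \phi(e_2)^2 - \phi(e_4)$ in this model (for instance $\phi(e_1) \ast \phi(e_3) = x \cdot \mu((x-2z)y^2) = x(x-y-2z)y^2$), the $x$-dependent terms cancel and what remains is $y^3 z - y^2 z^2$, corresponding to $v^3 w - v^2 w^2$ in $S$.

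For part (b), I will define $\sigma : S \to S$ on generators by $\sigma(u) = u+4v$, $\sigma(v) = v$, $\sigma(w) = w$. A short check that $\sigma$ respects each defining relation of $S$ (for example $[\sigma(u),\sigma(v)] = [u+4v, v] = v^2 = \sigma(v^2)$, and similarly for $[u,w]$ and $[v,w]$) shows $\sigma$ is an algebra automorphism. Using (a) together with the commutator identities $[u, v^n] = nv^{n+1}$ and $[u, w^n] = nvw^n$, the equalities $vp = pv$, $wp = pw$, and $up = pu + 4vp$ follow by brief computation; together these read $\alpha p = p\sigma(\alpha)$ on generators $\alpha \in \{u,v,w\}$, so $p$ is normal in $S$. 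Since $\sigma$ sends each of $u, v, vw$ back into $Q$, it restricts to an automorphism of $Q$, giving normality of $p$ in $Q$ as well.

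For part (c), one inclusion is immediate from (b): $Qp = pQ$ is a two-sided ideal of $Q$ containing $p$, so $I = BpB \subseteq QpQ = Qp$. For the reverse, by Lemma~\ref{Zhang} the subalgebra $Q \subseteq S$ has $\kk$-basis $\{u^i v^j w^k : i \geq 0,\ j \geq k \geq 0\}$, so it suffices to show $v^j w^k p \in I$ for every $j \geq k \geq 0$ (left multiplication by $u^i \in B$ then keeps us inside $I$). I plan an induction on $k$. For $k = 0$, the normality identity of (b) applied to $\phi(e_n)$ rearranges to $4v^n p = \phi(e_n)p - p\phi(e_n) \in I$. For $k = 1$, the identity $\phi(e_{j+1}) = uv^j - jv^j w$ yields $jv^j w p = u(v^j p) - \phi(e_{j+1})p \in I$ for every $j \geq 1$.

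For the inductive step $k \geq 2$, consider $\phi(e_{n_1+1}) \cdots \phi(e_{n_k+1}) \cdot p$, which lies in $I$ for any choice of $n_1, \ldots, n_k \geq 1$. Expanding each factor as $uv^{n_i} - n_i v^{n_i} w$ gives $2^k$ products; the key observation is that each of the defining relations of $S$ preserves total $w$-count, so after reducing to PBW form in $Q$ the unique monomial of $w$-degree $k$ comes from selecting every $-n_iv^{n_i}w$ term and equals $(-1)^k n_1 \cdots n_k \cdot v^N w^k$ with $N = \sum n_i$. Every other resulting PBW monomial has $w$-degree $\leq k-1$ and (being in $Q$) has $v$-degree at least its $w$-degree, so by the inductive hypothesis its product with $p$ lies in $I$ (absorbing any $u^a$-factor via $u \in B$). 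Hence $v^N w^k p \in I$ for every $N \geq k$, closing the induction. The main obstacle will be making the $w$-degree bookkeeping in this PBW reduction precise, but this is essentially mechanical once one checks that the three relations $[u,v]=v^2$, $[u,w]=vw$, $[v,w]=0$ are all $w$-count balanced.
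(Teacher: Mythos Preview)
Your proof is correct. Parts (a) and (b) are essentially identical to the paper's argument: the paper also computes $p$ via the Zhang twist model and verifies $vp=pv$, $wp=pw$, $up=p(u+4v)$ directly (without naming the automorphism $\sigma$, but the content is the same).

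For part (c), your argument is correct but organized differently from the paper's. The paper writes $Q_j = \sum_i (vw)^i R_{j-2i}$ and runs a double induction on $(i,j)$: first it grows the $R$-degree by sandwiching with $u$ on both sides (using the twist model to see that $x\,\kk[x,y]_{j-2i}+(x+(j+4)y)\,\kk[x,y]_{j-2i}=\kk[x,y]_{j+1-2i}$), and then bumps up the power of $vw$ using that $uv-vw\in B$. You instead use the PBW basis $\{u^a v^b w^c:b\ge c\}$ of $Q$ and induct on the $w$-exponent $k$, exploiting the observation that the defining relations of $S$ preserve $w$-count, so that in the expansion of $\phi(e_{n_1+1})\cdots\phi(e_{n_k+1})$ only the all-$w$ choice contributes a $w^k$-term. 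Both approaches hinge on the same two facts---$u\in B$ and $\phi(e_n)\in B$---but your $w$-grading argument packages the induction more cleanly and avoids the explicit twist computation in the paper's step (i). The paper's version, on the other hand, makes the role of the generator $uv-vw=\phi(e_2)$ more visible. Either way the bookkeeping is routine once the key inclusion $B\subseteq Q$ and the normality of $p$ are in hand.
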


\begin{proof} We employ Lemma~\ref{Zhang} and~\eqref{multiplyS} in all parts.

(a) Consider the computation in $S$ below:
\[
\begin{array}{rl}
p =  \phi(e_1 e_3-e_2^2-e_4) &= u(u-2w)v^2 - (u-w)v(u-w)v - (u-3w)v^3\\
& = x(x-2z)^\mu y^{\mu^2}y^{\mu^3}- (x-z)y^{\mu}(x-z)^{\mu^2}y^{\mu^3} - (x-3z)y^{\mu}y^{\mu^2}y^{\mu^3}\\
& = x(x-y-2z)y^2 - (x-z)y(x-2y-z)y - (x-3z)y^3\\
& = y^3z - y^2z^2\\
& = v^3w - v^2w^2.
\end{array}
\]

(b) From part (a), we get that $p$ is a normal element of $S$, and of $Q$, since $vp = pv$, $wp= pw$, and
\[
\begin{array}{rll}
u p &= u(v^3w - v^2w^2) &= xy^{\mu}y^{\mu^2}y^{\mu^3}z^{\mu^4} -x y^{\mu}y^{\mu^2}z^{\mu^3}z^{\mu^4}\\
 &= (y^3z - y^2z^2)x &= (y^3z - y^2z^2)(x+4y)^{\mu^4}\\
  &= (v^3w - v^2w^2)(u+4v) &= p(u+4v).
\end{array}
\]

(c)  On one hand, we get that $I = BpB \subseteq QpQ = Qp$, by Lemma~\ref{lem:BinQ} and part (b). On the other hand, recall that $R$ is the subalgebra of $Q$ generated by $u, v$. We will  show by induction on $i$ and $j$ that $p(vw)^i R_{j-2i} \subseteq I$ for all $0 \leq i \leq \lfloor j/2 \rfloor$; this yields $pQ_j \subseteq I$.

The base case $i =j =0$ holds since $p \in I$. For the induction step, assume that $p(vw)^iR_{j-2i} \subseteq I$. Now it suffices to show that (i) $p(vw)^iR_{j+1-2i} \subseteq I$, and (ii) $p(vw)^{i+1}R_{j-2i} \subseteq I$.

For (i), we have by induction that $$I ~\supseteq~ up(vw)^iR_{j-2i} + p(vw)^iR_{j-2i}u=:I',$$ since $u$ is a generator of $B$. Now consider the following computations, where we suppress the action of $\mu$ on invariant elements and on graded pieces of $\kk[x,y]$:
\[
\begin{array}{rl}
I' &= x (y^3z - y^2z^2)(yz)^i \kk[x,y]_{j-2i} +  (y^3z - y^2z^2)(yz)^i\kk[x,y]_{j-2i} x^{\mu^{j+4}}\\
&= (y^3z - y^2z^2)(yz)^i  x\kk[x,y]_{j-2i} +  (y^3z - y^2z^2)(yz)^i(x+(j+4)y)\kk[x,y]_{j-2i}\\
&= (y^3z - y^2z^2)(yz)^i \left[ x\kk[x,y]_{j-2i} +  (x+(j+4)y)\kk[x,y]_{j-2i}\right]\\
&= (y^3z - y^2z^2)(yz)^i \kk[x,y]_{j+1-2i},\\
\end{array}
\]
where 
 the last equality holds since $j+4>0$. 
 Thus  (i) holds.

For (ii), we get that $p(vw)^iR_{j+2-2i} \subseteq I$ by applying (i) twice. Now
$$I ~\supseteq~ p(vw)^iR_{j+2-2i} +  p(vw)^iR_{j-2i}(uv-vw) 
~\supseteq~ p(vw)^iR_{j-2i}(vw). $$
Note that $ R_{k}(vw)  = (vw)R_{k}$ for all $k$.  
So    $I ~\supseteq~ p(vw)^{i+1}R_{j-2i}$ and we are done.
\end{proof}

Now we complete the proof of  Theorem~\ref{ithm1}.

\begin{theorem} \label{thm:easy}
The ideal $I$ of $B$ is not finitely generated as a left or right ideal.
As a result, the kernel of $\lambda_0$ is not finitely generated as a left or right ideal of $U(W_+)$. 
\end{theorem}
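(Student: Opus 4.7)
The strategy is to convert the question about $I$ into one about $Q$ using the normality of $p$ (Lemma~\ref{lem:easy}(b)), carry out the resulting computation in the Zhang-twist picture of $Q$, and conclude via graded Nakayama. Since $I = Qp = pQ$ and $Q$ is a domain (being a Zhang twist of a commutative polynomial ring), the map $q \mapsto qp$ is a graded $\kk$-vector space isomorphism $Q \to I$, which is moreover a left $Q$-module isomorphism and hence a left $B$-module isomorphism (as $B \subseteq Q$); similarly $q \mapsto pq$ gives a right $Q$-module, hence right $B$-module, isomorphism $Q \to I$. Under these identifications, $B_{\geq 1}I \leftrightarrow B_{\geq 1}Q$ and $IB_{\geq 1} \leftrightarrow QB_{\geq 1}$, so it suffices to show that both $Q/B_{\geq 1}Q$ and $Q/QB_{\geq 1}$ are infinite-dimensional over $\kk$.

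Since $B$ is generated as an algebra by $u = \phi(e_1)$ and $uv - vw = \phi(e_2)$, every monomial of positive degree in $B$ has a leftmost (resp.\ rightmost) factor among $\{u,\,uv-vw\}$; hence $B_{\geq 1} = uB + (uv-vw)B = Bu + B(uv-vw)$, giving $B_{\geq 1}Q = uQ + (uv-vw)Q$ and $QB_{\geq 1} = Qu + Q(uv-vw)$. I would then use Lemma~\ref{Zhang} to identify $Q$, as a graded vector space, with the commutative polynomial ring $P := \kk[x,y,yz] \cong \kk[x,y,X]$ (where $X = yz$, $\deg x = \deg y = 1$, $\deg X = 2$). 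For the left case: the twisted product gives $(uQ)_n = x \cdot P_{n-1}$ and $((uv-vw)Q)_n = (xy - yz) \cdot P_{n-2}$ inside $P_n$; and the identity $(xy - yz)p = x(yp) - yz\cdot p$ shows that $(B_{\geq 1}Q)_n = xP_{n-1} + X P_{n-2}$, which is the degree-$n$ part of the commutative ideal $(x, X) \subseteq P$. Since $P/(x,X) \cong \kk[y]$ has dimension one in each degree, $Q/B_{\geq 1}Q$ is infinite-dimensional. The right case is similar but the twisting introduces $n$-dependent shifts: $(Qu)_n = P_{n-1} \cdot (x - (n-1)y)$ and $(Q(uv-vw))_n = P_{n-2} \cdot (xy - (n-2)y^2 - yz)$. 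The identity $xy - (n-2)y^2 - yz = (x-(n-1)y)y + (y^2 - X)$ reduces $(QB_{\geq 1})_n$ to $P_{n-1}(x - (n-1)y) + P_{n-2}(y^2 - X)$, the degree-$n$ part of the commutative ideal $\bigl(x - (n-1)y,\,y^2 - X\bigr) \subseteq P$; since $P$ modulo this ideal is again isomorphic to $\kk[y]$, the quotient is one-dimensional in each degree, so $Q/QB_{\geq 1}$ is also infinite-dimensional.

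By graded Nakayama (applicable because $I$ is bounded below and $B$ is connected graded), $I$ is not finitely generated as either a left or right $B$-module. Finally, since $\phi \colon U(W_+) \twoheadrightarrow B$ is surjective with $\phi(\ker \lambda_0) = I$, any finite one-sided $U(W_+)$-generating set for $\ker \lambda_0$ would push forward to a finite one-sided $B$-generating set for $I$, a contradiction. I expect the main technical obstacle is the Zhang-twist bookkeeping in the right case: keeping the shifts $\mu^{n-1}(x) = x - (n-1)y$ and $\mu^{n-2}(xy-yz) = xy - (n-2)y^2 - yz$ straight, and recognizing that despite the apparent $n$-dependence the two generators still combine into a codimension-one subspace in every degree via the commutative description above.
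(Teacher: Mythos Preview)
Your proof is correct and complete. The route differs somewhat from the paper's, so a brief comparison is in order.

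The paper argues by direct contradiction inside $S$: assuming ${}_B I$ is finitely generated, it writes $I_{n+1} = uI_n + (u-w)vI_{n-1}$ for large $n$, observes this forces $I_{n+1} \subseteq uSp + wSp$, and then exhibits the single element $v^{n-3}p \in I_{n+1}$ not lying there (since $v^{n-3} \notin uS + wS$). The right-ideal case is handled by a symmetric computation. Your argument is more structural: you transfer the problem along the $B$-module isomorphism $Q \cong I$ induced by multiplication by the normal element $p$, and then compute the full quotients $Q/B_{\geq 1}Q$ and $Q/QB_{\geq 1}$ degree by degree in the commutative picture $P = \kk[x,y,X]$, finding each to be $\kk[y]$ (or an $n$-twisted copy thereof). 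Graded Nakayama then does the rest.

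What each approach buys: the paper's proof is marginally more direct (no Nakayama, one explicit witness per degree) and works in the larger ring $S$, which keeps the generators $u,w$ of the containing left ideal separate. Your approach yields strictly more information --- you have effectively computed $\hilb(I/B_{\geq 1}I) = \hilb(I/IB_{\geq 1}) = t^4/(1-t)$, i.e.\ the minimal generating space of $I$ on either side is one-dimensional in every degree $\geq 4$ --- and the bookkeeping stays entirely inside $Q$. The witnesses agree: your degree-$n$ class $y^n \in Q/B_{\geq 1}Q$ corresponds exactly to the paper's $v^{n-4}p \in I_n$. Your handling of the $n$-dependent shift in the right-hand case (rewriting $xy-(n-2)y^2-X = (x-(n-1)y)y + (y^2-X)$ to reduce to the fixed ideal $(x-(n-1)y,\,y^2-X)$ in degree $n$) is the key step, and it is carried out correctly.
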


\begin{proof}
Recall that $\ker \lambda_0 = (e_1e_3-e_2^2-e_4)$ by Proposition~\ref{prop:(d)}. It is clear that if $\ker \lambda_0$ is finitely generated as a left/right ideal of $U(W_+)$, then $I$ is a finitely generated as a left/right ideal of $B$.
Therefore, to show that $\ker \lambda_0$ is not finitely generated it suffices to show that ${}_B I$ and $I_B$ are not finitely generated. 

By way of contradiction, suppose that ${}_B I$ is finitely generated. Then, there exists $n\geq 4$ so that $B I_{\leq n} = I$.  
Since $B$ is generated by $u$ and $(u-w)v$, 
we get that 
\begin{equation} \label{eq:BI}
I_{n+1} ~=~ B_1 I_n + B_2 I_{n-1} ~=~ u I_n + (u-w)v I_{n-1}.
\end{equation}
By Lemma~\ref{lem:easy}, 
 $I = Qp \subseteq SpS = Sp$.
Since $vI \subseteq vSp \subseteq Sp$, we get by \eqref{eq:BI} that 
\begin{equation} \label{eq:uSp}
I_{n+1} \subseteq uSp + (u-w)Sp = uSp + wSp.
\end{equation}
By using Lemma~\ref{Zhang} and \eqref{multiplyS}, it is easy to see that $uS + wS =   x \kk[x,y,z] + z \kk[x,y,z]$ and that a positive power of $y$ cannot belong to the right-hand-side. So, a positive power of $v$ cannot belong to $uS+wS$. Therefore,
\begin{equation} \label{eq:vp}
v^{n-3}p \not \in uSp + wSp.
\end{equation}
On the other hand, $v^{n-3}p \in I_{n+1}$ by Lemma~\ref{lem:easy}(c).
This contradicts~\eqref{eq:uSp} and \eqref{eq:vp}. Thus, ${}_B I$ is not finitely generated.

Next, suppose that $I_B$ is finitely generated. Then, there exists $n\geq 4$ so that $I_{\leq n}B = I$, with
\begin{equation} \label{eq:IB}
I_{n+1} ~=~ I_n B_1  + I_{n-1}B_2   ~=~ I_n u + I_{n-1} (u-w)v  ~=~ I_n u + I_{n-1} v(u + v-w).
\end{equation}
We get that $I, Iv \subseteq pS$ by Lemma~\ref{lem:easy}(b). So, the right-hand-side of \eqref{eq:IB} is contained in $pSu+pS(v-w)$. With an argument similar to that in the previous paragraph, $Su+S(v-w)$ does not contain positive powers of $v$. So, $p v^{n-3} \not \in I_n u + I_{n-1} v(u + v-w)$. On the other hand, $p v^{n-3} \in I_{n+1}$ by Lemma~\ref{lem:easy}(b,c), which contradicts \eqref{eq:IB}. Thus, $I_B$ is not finitely generated.
\end{proof}

\begin{remark}\label{rem:two}
We do not know whether or not $\ker \lambda_a$ is finitely generated for $a \neq 0,1$.
\end{remark}

One can of course deduce from Theorem~\ref{thm:easy} that $U(W)$ and $U(V)$ are neither left nor right noetherian; see \cite[Lemma~1.7]{SieWal:Witt}.  
Nevertheless,  a direct proof that $U(W)$ is not left or right noetherian is of independent interest, and we give such a result to end the section. 
First, we establish some notation.

\begin{notation}[$\widehat{S}$, $\widehat{R}$, $\widehat{B}$, $\widehat{\phi}$, $\widehat{\lambda}_a$, $\eta_a$, $\wh{I}$]\label{not:widehat}
Since $v$ is normal in $S$ and in $R$, we may invert it.  
Let $\widehat{S} := S[v^{-1}]$, and let $\widehat{R}:=R[v^{-1}]$.

Note that $\phi$ extends to  an algebra homomorphism $\widehat{\phi}:  U(W) \to \widehat{S} $  defined by \eqref{phi} for all $n \in \ZZ$. Likewise, $\lambda_a$ extends to an algebra homomorphism $\widehat{\lambda}_a:  U(W) \to \widehat{R}$ defined by \eqref{lambda} for all $n \in \ZZ$. 
For $a \in \kk$ define $\eta_a:  \widehat{S} \to \widehat{R}$ by $u \mapsto u, v\mapsto v, w \mapsto av$.
Note that $\widehat{\lambda}_a = \eta_a \widehat{\phi}$.

Let $\widehat{B} := \widehat{\phi}(U(W))$.
Finally, let $\wh{I} = \wh{\phi}(\ker \wh{\lambda}_0)$.  Note that $\wh{I} = \wh{B} \cap \ker \eta_0$.
\end{notation}

We first note that the proof of Lemma~\ref{lem1} extends to $U(W)$ to give
\beq \label{fact}
\ker \wh{\lambda}_0 = \ker \wh{\lambda}_1.
\eeq
We now show:  

\begin{proposition}\label{prop:easy2}
Recall $p =  \phi (e_1 e_3-e_2^2-e_4) =  w(v-w) v^2$ from Notation~\ref{not3} and Lemma~\ref{lem:easy}.
We have that
\[ \wh{I} =  \widehat{B} \cap \ker \eta_0 =  \widehat{B} \cap  \ker \eta_1= \widehat{B}p\widehat{B}  = \widehat{S}p = p\widehat{S}.\]
\end{proposition}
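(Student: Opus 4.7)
The plan is to dispose first of the easy equalities, then prove the central inclusion $\wh{S}p \subseteq \wh{B}p\wh{B}$. The identities $\wh{I} = \wh{B} \cap \ker \eta_0 = \wh{B} \cap \ker \eta_1$ will follow from $\wh{B} \cap \ker \eta_a = \wh{\phi}(\ker \wh{\lambda}_a)$ combined with \eqref{fact}; $\wh{S}p = p\wh{S}$ will follow from normality of $p$ in $\wh{S}$ (Lemma~\ref{lem:easy}(b) extends verbatim after localizing at $v$); and $\wh{B}p\wh{B} \subseteq \wh{B} \cap \ker \eta_0$ is immediate since $p \in \wh{B} \cap \ker \eta_0$. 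To handle the containment $\wh{B} \cap \ker \eta_0 \subseteq \wh{S}p$, I will apply a Chinese Remainder Theorem argument: the normal elements $w$ and $w-v$ of $\wh{S}$ satisfy $w - (w-v) = v$, a unit of $\wh{S}$, so $w\wh{S} + (w-v)\wh{S} = \wh{S}$, and the CRT for normal elements yields $\ker \eta_0 \cap \ker \eta_1 = w\wh{S} \cap (w-v)\wh{S} = \wh{S} \cdot w(w-v) = \wh{S}p$, using $p = -v^2 w(w-v)$ and the unit $v^2$.

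The heart of the proof is $\wh{S}p \subseteq \wh{B}p\wh{B}$, which I will establish in two stages. First, I would show that $\wh{S}$ is spanned as a left $\wh{B}$-module by $\{v^k : k \in \ZZ\}$: the identity $\wh{\phi}(e_{k+1}) = uv^k - kv^k w$ rearranges to $v^k w = k^{-1}(uv^k - \wh{\phi}(e_{k+1})) \in \wh{B} + \wh{B}v^k$ for $k \neq 0$, while $w = \wh{\phi}(e_0)v - u$ handles $k = 0$; induction on the $w$-exponent will then place every PBW monomial $u^a v^k w^c$ in $\sum_k \wh{B}v^k$, giving $\wh{S} = \sum_{k \in \ZZ} \wh{B}v^k$. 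Second, I would show $v^k p \in \wh{B}p\wh{B}$ for every $k \in \ZZ$: the elements $g_{m, m', M} := e_m e_{M-m} - e_{m'}e_{M-m'} + (m-m')e_M$ all lie in $\ker \wh{\lambda}_0$ (a direct check using $\wh{\lambda}_0(e_n) = uv^{n-1}$), and a Zhang-twist computation gives $\wh{\phi}(g_{m, m', M}) = (m-m')(m+m'-M)\, v^{M-4}p$. For each $k$, setting $M = k+4$ and choosing $m, m'$ so the scalar coefficient is nonzero reduces matters to showing $g_{m, m', M} \in (X)_{U(W)}$, the two-sided ideal generated by $X := e_1 e_3 - e_2^2 - e_4$; this can be arranged via iterated Lie brackets of $X$ with suitable $e_j$, for example $[e_{-1}, X] = 2 g_{0, 1, 3}$. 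Combining the two stages, any $sp \in \wh{S}p$ decomposes as $\sum b_i v^{k_i} p$ with $b_i \in \wh{B}$, and each $b_i v^{k_i} p$ lies in $\wh{B}p\wh{B}$ because $\wh{B}p\wh{B}$ is a left $\wh{B}$-ideal containing $v^{k_i}p$.

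The hard part will be the bracket-induction argument verifying $g_{m, m', M} \in (X)_{U(W)}$ for every valid triple: individual cases reduce to straightforward commutator identities in $U(W)$, but a systematic induction covering all triples requires careful bookkeeping of how brackets such as $[e_j, X]$, $[e_j, g_{m, m', M}]$, and $[e_j, f_n]$ (with $f_n = e_n e_{n+2} - e_{n+1}^2 - e_{2n+2}$) interact modulo the $[e_n, e_m] = (m-n)e_{n+m}$ relation. Once this is accomplished, the five-way chain $\wh{I} = \wh{B} \cap \ker \eta_0 = \wh{B} \cap \ker \eta_1 = \wh{B}p\wh{B} = \wh{S}p = p\wh{S}$ follows by combining with the first paragraph.
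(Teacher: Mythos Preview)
Your first paragraph is fine and matches the paper's treatment of the ``easy'' equalities: normality of $p$, the CRT-style identification $\ker\eta_0\cap\ker\eta_1 = w\wh{S}\cap(v-w)\wh{S}=p\wh{S}$, and the use of \eqref{fact} to identify $\wh{B}\cap\ker\eta_0$ with $\wh{B}\cap\ker\eta_1$ are exactly what the paper does.

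Your Stage~1 reduction $\wh{S}=\sum_{k\in\ZZ}\wh{B}v^k$ is correct and is a clean way to organize the argument; the paper instead inducts on the $w$-exponent directly, showing $\wh{R}w^jp\subseteq\wh{B}p\wh{B}$ for all $j\ge 0$.

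The genuine gap is in Stage~2. You correctly compute $\wh{\phi}(g_{m,m',M})=(m-m')(m+m'-M)\,v^{M-4}p$, but this only places $v^{M-4}p$ in $\wh{B}$, not in $\wh{B}p\wh{B}$; for the latter you need $g_{m,m',M}\in(X)_{U(W)}$, and you yourself flag this bracket-induction as the ``hard part'' and do not carry it out. This step is not obviously short: it essentially asks you to prove that $\ker\wh{\lambda}_0=(X)$ inside $U(W)$, which in the paper is only obtained \emph{a posteriori} via the external reference \cite{CM07} in Remark~\ref{rem:one}.

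The detour through $U(W)$ is unnecessary. From Lemma~\ref{lem:easy}(b) you have $up=p(u+4v)$ and $vp=pv$, $wp=pw$; hence for every $n\in\ZZ$,
\[
[\wh{\phi}(e_n),\,p]
=(u-(n-1)w)v^{n-1}p - p(u-(n-1)w)v^{n-1}
= p(u+4v-(n-1)w)v^{n-1}-p(u-(n-1)w)v^{n-1}
=4v^np,
\]
which lies in $\wh{B}p+p\wh{B}\subseteq\wh{B}p\wh{B}$. This single commutator replaces your entire Stage~2 and the unfinished bracket-induction. The paper uses precisely this mechanism (in the slightly more general form $[\wh{\phi}(e_n),w^jp]=(j+4)v^nw^jp$) to drive its induction; combining it with your Stage~1 decomposition gives an equally valid proof.
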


\begin{proof}
We first show that $\widehat{B}p\widehat{B}  = \widehat{S}p = p\widehat{S}$.
Certainly, $\widehat{B}p\widehat{B} \subseteq \widehat{S}p\widehat{S} = \widehat{S}p = p\widehat{S}$, where the last two equalities hold  because a normal element of $S$ will also be normal in $\widehat{S}$.

For the other direction, we will show that $\widehat{R} w^j p \subseteq \widehat{B}p\widehat{B}$ for all $j \geq 0$  by induction. Since  $\widehat{S} = \widehat{R} \cdot \kk[w]$, this will imply that $\wh{S} p \subseteq \wh{B}p\wh{B}$.
So assume that $w^j p \in \widehat{B}p\widehat{B}$ for some $j \geq 0$ (it is clear for $j=0$).  
Since $up=p(u+4v)$, we get that for all $n \in \ZZ$:
\[ \widehat{B}p\widehat{B} \ni ~[\widehat{\phi}(e_n), w^jp]= (u-(n-1)w)v^{n-1}w^j p - w^jp (u-(n-1)w)v^{n-1} =
(j+4) v^n w^j p.\]
So, $\kk[v,v^{-1}] \cdot w^j p \subseteq \widehat{B}p\widehat{B}$.
Since $u = \widehat{\phi}(e_1) \in \widehat{B}$, we have $\widehat{R} w^j p = \kk[u] \cdot \kk[v, v^{-1}] \cdot  w^jp \subseteq \widehat{B}p\widehat{B}$.
Finally,  since we have seen that $v^{-1} w^j p \in \widehat{R} w^j p \subseteq \widehat{B}p\widehat{B}$, we have that
\[ \widehat{B}p\widehat{B} \ni~ (\widehat{\phi}(e_1)- \widehat{\phi}(e_2) v^{-1}) w^j p = w^{j+1} p.\]
By induction,  $\wh{B}p\wh{B} = \wh{S} p$, as desired.

From the definitions,  $p \in (\ker \eta_0) \cap (\ker \eta_1)$.  So 
\[ \widehat{B}p\widehat{B} \subseteq (\ker \eta_0) \cap (\ker \eta_1) \cap \widehat{B} =w \widehat{S} \cap (v-w) \widehat{S} = w(v-w) \widehat{S} = p \widehat{S}.\]
Combining this with the first part of the proof, we have $\widehat{B}p\widehat{B} = (\ker \eta_0) \cap (\ker \eta_1) \cap \widehat{B}$.  By \eqref{fact} and the definition of $\wh{I}$, we have: 
\[ \wh{I}=(\ker \eta_0) \cap \widehat{B} = \widehat{\phi}(\ker \widehat{\lambda}_0)  = \widehat{\phi}(\ker \widehat{\lambda}_1) = (\ker \eta_1) \cap \widehat{B},\]
 completing the proof.
\end{proof}


From Proposition~\ref{prop:easy2} we obtain:

\begin{theorem}\label{thm:easy2}
The ideal $\wh{I}$ of $\wh{B}$ is not finitely generated as a left or right ideal. 
As a result, the kernel of $\widehat{\lambda}_0$ is not finitely generated as a left or right ideal
of $U(W)$.
\end{theorem}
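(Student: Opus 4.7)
Since $\wh\phi$ is surjective and $\wh{I} = \wh\phi(\ker \wh{\lambda}_0)$, it suffices to show that $\wh{I}$ is not finitely generated as either a left or right ideal of $\wh{B}$. For both claims the plan is to adapt the strategy of Theorem~\ref{thm:easy}, exploiting the description $\wh{I} = \wh{S}p = p\wh{S}$ from Proposition~\ref{prop:easy2}, together with the fact that $p$ is normal and non-zero-divisor in the domain $\wh{S}$.

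For the left ideal case, my key observation is that every generator $\wh{\phi}(e_n) = (u-(n-1)w)v^{n-1} = u\cdot v^{n-1} - (n-1)w\cdot v^{n-1}$ lies in the right ideal $K := u\wh{S} + w\wh{S}$ of $\wh{S}$. Since $K$ is closed under right multiplication by $\wh{S}$, every non-trivial product of generators also lies in $K$; and since $1 \notin K$ (as $\wh{S}/K \cong \kk[v,v^{-1}]$ via the Zhang twist description of Lemma~\ref{Zhang}), I obtain a decomposition $\wh{B} = \kk \cdot 1 \oplus (\wh{B}\cap K)$. Supposing $\wh{I} = \sum_{i=1}^k \wh{B}f_i$, writing $f_i = s_i p$ and using that $K s_i \subseteq K$ gives $\wh{I} \subseteq \sum_i \kk f_i + Kp$. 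Because $p$ is a non-zero-divisor, right multiplication by $p$ induces a vector-space isomorphism $\wh{S}/K \to \wh{I}/Kp$. The left-hand side is infinite-dimensional, but the right-hand side is at most $k$-dimensional, a contradiction.

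For the right ideal case, the naive analogue $\wh{S}u + \wh{S}w$ does \emph{not} contain the generators: one checks $\wh{\phi}(e_n) \notin \wh{S}u + \wh{S}w$ in general. Instead I would use the left ideal $K' := \wh{S}u + \wh{S}(v-w)$ of $\wh{S}$, noting that $v-w$ is normal so $\wh{S}(v-w) = (v-w)\wh{S} = \ker \eta_1$. Applying $\eta_1$, one computes $\eta_1(\wh{\phi}(e_n)) = v^{n-1}u \in \wh{R}u$; and because the rightmost factor of any non-trivial product $\wh{\phi}(e_{n_1})\cdots\wh{\phi}(e_{n_k})$ maps into $\wh{R}u$, the whole product lies in $\eta_1^{-1}(\wh{R}u) = K'$. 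This yields $\wh{B} = \kk \cdot 1 \oplus (\wh{B}\cap K')$. If $\wh{I} = \sum_{i=1}^k f_i \wh{B}$, the symmetric argument (using that $\wh{S}K' = K'$) produces $\wh{I} \subseteq \sum_i \kk f_i + pK'$, and $\wh{I}/pK' \cong \wh{S}/K' \cong \wh{R}/\wh{R}u$ as vector spaces. A short degreewise computation using the relations $x^{a+1}y^{n-1-a} \equiv (n-1)x^a y^{n-a}$ inherited from the Zhang twist shows $\wh{R}/\wh{R}u \cong \kk[v,v^{-1}]$, which is again infinite-dimensional, providing the desired contradiction.

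The main obstacle is the choice of $K'$ in the right-ideal case: the generators $\wh{\phi}(e_n)$ do not land in the most obvious left ideal $\wh{S}u + \wh{S}w$, and one must use $\eta_1$ (rather than $\eta_0$) to see that they fall inside $\wh{S}u + \wh{S}(v-w)$. With this ideal in hand, the argument is fully parallel to the left-ideal case and reduces to the elementary fact that $\wh{R}/\wh{R}u$ is infinite-dimensional.
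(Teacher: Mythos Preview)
Your argument is correct and uses the same essential ingredients as the paper: the right ideal $K = u\wh{S} + w\wh{S}$ for the left-ideal case and the left ideal $K' = \wh{S}u + \wh{S}(v-w)$ for the right-ideal case (the latter being the natural choice the paper leaves to the reader, by analogy with Theorem~\ref{thm:easy}). The paper packages the contradiction via the grading---assuming generators in degrees $\leq n$, it shows $\wh{I}_{n+1} \subseteq Kp$ and then exhibits the explicit element $v^{n-3}p \notin Kp$---whereas you give an equivalent ungraded dimension count on $\wh{I}/Kp \cong \wh{S}/K \cong \kk[v,v^{-1}]$. Your use of $\eta_1$ to verify that every nontrivial monomial in the $\wh{\phi}(e_n)$ lands in $K'$ (via $\eta_1(\wh{\phi}(e_n)) = v^{n-1}u \in \wh{R}u$) is a clean alternative to the direct manipulation in the proof of Theorem~\ref{thm:easy}, but the underlying idea is the same.
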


\begin{proof}
 This argument is similar to the proof of Theorem~\ref{thm:easy}.
 It suffices to show that $\widehat{I} $ is not finitely generated as a left or right ideal of $\widehat{B}$.   

By way of contradiction, suppose that  for some $n \in \NN$, we have $\widehat{I} = \widehat{B} (\widehat{I}_{-n} \oplus \dots \oplus \widehat{I}_n)$.  For all $k \in \ZZ$, we have $\widehat{\phi}(e_k) \in u \widehat{S} + w \widehat{S}$. So, $\widehat{B}_k \subseteq u\widehat{S} + w\widehat{S}$ for all $k \neq 0$, and $\wh{I}_k \subseteq u \wh{S}+w \wh{S}$ for all $k$ with $\abs{k} > n$.   Note that a power of $v$ cannot belong to $u\wh{S} + w\wh{S}$. So, $v^{n-3}p \not \in  \wh{I}$. 
However, by Proposition~\ref{prop:easy2}, we get that $\wh{I} = \wh{S}p$ and $v^{n-3}p \in \widehat{I}$.  This contradiction shows that ${}_{\widehat{B}} \widehat{I}$ is not finitely generated. 

The proof that $\widehat{I}_{\widehat{B}}$ is not finitely generated is similar; we leave the details to the reader.
\end{proof}

\begin{corollary}\label{cor:Vir}
The universal enveloping algebra $U(V)$ is neither left nor right noetherian.
\end{corollary}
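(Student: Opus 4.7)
The plan is to deduce this corollary as an immediate consequence of Theorem~\ref{thm:easy2}, by exploiting the fact that $U(W)$ appears as a quotient of $U(V)$. Specifically, since the basis element $c \in V$ is central (by Definition~\ref{idef:Liealgs}(a), $[e_n,c]=0$), it is also a central element of $U(V)$, so the two-sided ideal it generates satisfies $(c) = c\,U(V) = U(V)\,c$. Comparing the Virasoro and Witt brackets in Definition~\ref{idef:Liealgs}(a,b) shows that setting $c = 0$ turns the relation $[e_n, e_m] = (m-n)e_{n+m} + \tfrac{c}{12}(m^3-m)\delta_{n+m,0}$ into $[e_n, e_m] = (m-n)e_{n+m}$. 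Thus the algebra map $U(V) \to U(W)$ determined by $e_n \mapsto e_n$ and $c \mapsto 0$ induces an isomorphism $U(V)/(c) \cong U(W)$.

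With this identification in hand, the argument is by contradiction: if $U(V)$ were left noetherian, then so too would be every quotient, in particular $U(V)/(c) \cong U(W)$. This contradicts Theorem~\ref{thm:easy2}, which produces a non-finitely-generated left ideal of $U(W)$. The same reasoning applied on the right finishes the proof. There is no real obstacle here --- the substantive work has already been carried out in Theorem~\ref{thm:easy2}; the only item to record is the (standard) presentation of $U(V)/(c)$ as $U(W)$, which follows by comparing Lie-algebraic relations and invoking the universal property of enveloping algebras.
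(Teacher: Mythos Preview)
Your proof is correct and takes essentially the same approach as the paper: both deduce the result from Theorem~\ref{thm:easy2} via the identification $U(W) \cong U(V)/(c)$ and the fact that quotients of one-sided noetherian rings are one-sided noetherian.
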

\begin{proof}
This follows directly from Theorem~\ref{thm:easy2}, since $U(W) = U(V)/(c)$.
\end{proof}

\begin{remark}\label{rem:one}
After the first draft of this paper was finished, we learnt of the results of Conley and Martin in \cite{CM07}. We thank the referee for calling \cite{CM07} to our attention.
The paper considers a family of homomorphisms defined as (using their notation) 
\[ \pi_\gamma: U(W) \to \kk[x,x^{-1}, \partial], \quad e_n \mapsto x^{n+1} \partial + (n+1) \gamma x^n.\]
Using the identification $u = x^2 \partial$, $v =x$ from the discussion after Theorem~\ref{ithm1}, we have
\[ \wh{\lambda}_a (e_n) = (x^2 \partial - (n-1) a x) x^{n-1} = x^{n+1} \partial + (1-a)(n-1)x^n.\]
The reader may verify that $ \wh{\lambda}_a(e) = x^{2(1-a)} \pi_{1-a}(e) x^{-2(1-a)}$ for all  $e \in U(W)$ (where here one uses a suitable extension of $k[x,x^{-1}, \partial]$ to carry out computations).  As a result,
\beq\label{boff}
\ker \wh{\lambda}_a = \ker \pi_{1-a}
\eeq
for all $a \in \kk$.

\cite[Theorem~1.2]{CM07} shows (using \eqref{boff}) that $\ker \wh{\lambda}_0 = \ker \wh{\lambda}_1 = (e_{-1}e_2-e_0e_1-e_1)$.
Recall from Proposition~\ref{prop:(d)} that $\ker \lambda_0 $ is generated as  a two-sided ideal by $g_4:= e_1e_3 - e_2^2-e_4$.
 A computation gives that
\[ \ad(e_{-1}^3)(g_4) = [e_{-1}, [e_{-1}, [e_{-1}, g_4]]] = 12(e_{-1}e_2-e_0e_1 -e_1),\]
and it follows that 
\[ (g_4) = \ker \wh{\lambda}_0 = \ker \wh{\lambda}_1 = (e_{-1} e_2 - e_0e_1 -e_1).\]
\end{remark}


\section{The connection between the maps $\phi$ and $\rho$} \label{sec:kernels} 

For the remainder of the paper, we return to considering $U(W_+)$.  
The main goal of this section is to relate the map $\phi$ (of Definition~\ref{idef:maps}) that played a crucial role in the proof of Theorem~\ref{thm:easy}  to the map $\rho$ (of Notation~\ref{inot1}) that was the focus of \cite{SieWal:Witt}.  We show that $\ker \phi = \ker \rho$; in fact, we have:

\begin{theorem} \label{thm:kernels}
We have that $\ker \rho = \ker \phi = \bigcap_{a \in \kk} \ker \lambda_a$.  As a consequence, $\rho(U(W_+)) \cong \phi(U(W_+))$.
\end{theorem}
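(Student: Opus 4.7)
The plan is to split the chain of equalities into two parts: the comparatively routine equality $\ker\phi = \bigcap_{a\in\kk}\ker\lambda_a$, and the deeper equality $\ker\phi = \ker\rho$ where the real work lies.

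For $\ker\phi = \bigcap_a\ker\lambda_a$, I would define, for each $a\in\kk$, an algebra homomorphism $\eta_a\colon S \to R$ by $u\mapsto u$, $v\mapsto v$, $w\mapsto av$. Well-definedness is a direct substitution into the three defining relations of $S$: $\eta_a(uv-vu-v^2)=0$ holds in $R$, $\eta_a(uw-wu-vw) = a(uv-vu-v^2)=0$, and $\eta_a(vw-wv)=0$. Comparing \eqref{phi} with \eqref{lambda} gives $\lambda_a = \eta_a\circ\phi$, so $\ker\phi \subseteq \bigcap_a \ker\lambda_a$. For the reverse inclusion, it suffices to show $\bigcap_a \ker\eta_a = 0$ inside $S$. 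Using Lemma~\ref{Zhang} to identify $S$ with $\kk[x,y,z]^\mu$, one verifies that $\eta_a$ intertwines $\mu$ with $\nu$ and so is identified with the ordinary commutative surjection $\kk[x,y,z] \twoheadrightarrow \kk[x,y]$, $z\mapsto ay$, whose kernel is the principal ideal $(z-ay)$. Any polynomial $s = \sum_k s_k(x,y)z^k$ lying in every such ideal satisfies $\sum_k s_k(x,y) a^k y^k = 0$ for all $a\in\kk$; since $\kk$ has characteristic zero and is hence infinite, each $s_k$ vanishes, so $s=0$. Pulling back via $\phi$ yields $\bigcap_a \ker\lambda_a \subseteq \ker\phi$.

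For $\ker\phi = \ker\rho$, the plan is to realize both maps as factoring through a common quotient of $U(W_+)$ by identifying the images inside a single skew Laurent ring. Because $v$ is normal in $S$, the Ore localization $\widehat S := S[v^{-1}]$ admits a presentation as the skew Laurent ring $\kk[U,W][v^{\pm1};\sigma]$ where $U := uv^{-1}$ and $W := wv^{-1}$ commute and the automorphism $\sigma$, given by conjugation by $v$, satisfies $\sigma(U)=U-1$ and $\sigma(W)=W$. In this presentation $\phi(e_n) = (U-(n-1)W)v^n$. On the other side, $\kk(X)[t;\tau^*]$ is a skew polynomial ring over a function field, and $\rho(e_n) = f_n t^n$ with $f_n\in \kk(X)$ determined recursively by $\rho(e_1) = t$, $\rho(e_2) = ft^2$, and the Witt bracket $[e_1,e_n] = (n-1)e_{n+1}$. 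The plan is then to build an injective algebra map from the localization $\widehat B := B[v^{-1}]$ into $\kk(X)[t;\tau^*]$ that sends $v$ to a rescaling of $t$ and carries the triple $(U,W,\sigma)$ to an appropriate pair of rational functions on $X$ together with $\tau^*$, yielding $\psi\circ\phi = \rho$ and hence $\ker\phi\subseteq\ker\rho$. For the reverse inclusion, I would show that for each $a\in\kk$ the map $\lambda_a$ factors through $\rho$ via a suitable specialization $\kk(X)[t;\tau^*]\dashrightarrow \widehat R$, so that $\ker\rho\subseteq\ker\lambda_a$ for all $a$; combined with the first step this gives $\ker\rho\subseteq\bigcap_a\ker\lambda_a=\ker\phi$.

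The main obstacle will be the explicit geometric dictionary in the second step: one has to produce rational functions on the quadric cone $X=V(xz-y^2)$ that play the roles of $U$ and $W$ and on which $\tau^*$ acts precisely as $\sigma$, and to match the scaling between $v$ and $t$ so that $(U-(n-1)W)v^n$ transports to the known $f_n t^n$. Once this identification is pinned down, $\phi$ and $\rho$ factor through the same quotient of $U(W_+)$, giving $\ker\phi = \ker\rho$ and the stated consequence $\rho(U(W_+)) \cong \phi(U(W_+))$.
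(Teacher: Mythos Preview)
Your first part, $\ker\phi=\bigcap_a\ker\lambda_a$, is correct and is essentially the paper's argument, just phrased algebraically rather than geometrically: the paper views $\eta_a$ as pullback along the closed immersion $i_a\colon\PP^1\to\PP^2$, $[x:y]\mapsto[x:y:ay]$, and observes that the lines $V(z-ay)$ cover $\PP^2\smallsetminus V(y)$, which is exactly your ``polynomial vanishing at infinitely many $a$'' step.

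Your second part, however, diverges from the paper and contains a real gap. You propose to get $\ker\phi\subseteq\ker\rho$ by constructing an injection $\psi$ from (a localization of) $B$ into $\kk(X)[t;\tau^*]$ with $\psi\circ\phi=\rho$. As you yourself flag, this requires producing rational functions $f_1,f_2\in\kk(X)$ with $\tau^*f_1=f_1-1$ and $\tau^*f_2=f_2$, algebraically independent, and matching the scaling of $v$ against $t$. You have not done this, and it is not obvious. (A side issue: $v\notin B$, so ``$\widehat B:=B[v^{-1}]$'' does not literally make sense as a localization of $B$; you would have to work inside $\widehat S$.)

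The paper avoids this injection entirely by running the \emph{same} argument for $\rho$ as for $\phi$. For each $a\in\kk$ it constructs an explicit morphism $\psi_a\colon\PP^1\to X$ with $\psi_a\nu=\tau\psi_a$, extends $\psi_a^*$ to an algebra map $\Psi_a\colon\kk(X)[t;\tau^*]\to\kk(\PP^1)[s;\nu^*]$, and checks on $e_1,e_2$ that $\Psi_a\circ\rho=\gamma\circ\lambda_a$ for an injective map $\gamma\colon R\to\kk(\PP^1)[s;\nu^*]$. This already yields your part (b), $\ker\rho\subseteq\ker\lambda_a$. The extra step that replaces your part (a) is purely geometric: the images $C_a:=\psi_a(\PP^1)$ form a one-parameter family of curves sweeping out a dense open subset of $X$, so $\bigcap_a\ker\Psi_a=0$, and hence $\ker\rho=\bigcap_a\ker(\Psi_a\rho)=\bigcap_a\ker\lambda_a$. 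No direct comparison between $S$ and $\kk(X)[t;\tau^*]$ is needed. If you want to salvage your outline, the cleanest fix is to drop the injection in (a) and instead make your ``specializations $\kk(X)[t;\tau^*]\dashrightarrow\widehat R$'' in (b) precise as the maps $\Psi_a$ above, then add the covering argument $\bigcap_a\ker\Psi_a=0$.
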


Consider Notation~\ref{inot0} and the following notation for this section.  Recall the definitions of $X, f , \tau$ from Notation~\ref{inot1}. So, $\tau \in \Aut(X)$ and $\tau^*:\kk(X) \to \kk(X)$ is the pullback of $\tau$.
Here we take $\mu \in \Aut(\PP^2)$ and $\nu \in \Aut(\PP^1)$ to be morphisms of varieties, 
defined by
\[ \mu([x:y:z]) = [x-y:y:z] \quad \mbox{ and } \quad \nu([x:y]) = [x-y:y].\]
We 
denote the respective pullback morphisms  by $\mu^*$ and $\nu^*$.  However,   to be consistent with Lemma~\ref{Zhang} (and abusing notation slightly), we still write $$S \cong \kk[x,y,z]^{\mu} \quad \text{ and } \quad R \cong \kk[x,y]^{\nu}.$$
We also establish the convention that $h^\tau := \tau^* h$ for $h \in \kk(X)$, and similarly for pullback by other morphisms.

\smallskip

Before proving Theorem~\ref{thm:kernels}, we provide some preliminary results.

\begin{lemma}[$\psi_a$, $\Psi_a$] \label{lem:psi}
For $a \in \kk$, we have the following statements.
\begin{enumerate}
\item We have a well-defined morphism $\psi_a:  \PP^1 \to X$ given by 
\[\psi_a([x:y]) = [2x^2-4xy-6ay^2: x^2-2xy+y^2: -x^2+3xy-2y^2: x^2-4xy+4y^2].\]
\item $\psi_a \nu = \tau \psi_a.$
\item $\psi_a^*$ extends to an algebra homomorphism $ \Psi_a:  \kk(X)[t; \tau^*] \to \kk(\PP^1)[s; \nu^*]$, where $\Psi_a(t) = s$.
\end{enumerate}
\end{lemma}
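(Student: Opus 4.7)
The plan is to handle the three parts in order, with part (b) doing most of the work and the other parts following almost formally.

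For part (a), I would first observe that all four coordinate polynomials of $\psi_a$ are homogeneous of degree 2 in $x,y$, so the map is a priori a rational map $\PP^1 \dra \PP^3$. To check it is defined everywhere, I note that the second and fourth coordinates factor as $(x-y)^2$ and $(x-2y)^2$, which have no common zero in $\PP^1$. For the image, I would write the coordinates as $W=2x^2-4xy-6ay^2$, $X'=(x-y)^2$, $Y'=-(x-y)(x-2y)$, $Z'=(x-2y)^2$ and note that $X'Z' = (Y')^2$, so $\psi_a(\PP^1) \subseteq V(xz-y^2) = X$.

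For part (b), I would simply compute both $\psi_a \circ \nu$ and $\tau \circ \psi_a$ coordinate by coordinate and check they agree. On the one side, substituting $x \mapsto x-y$ into the formula for $\psi_a$ gives
\[ \psi_a \nu([x:y]) = [2x^2 - 8xy + (6-6a)y^2 :\, (x-2y)^2 :\, -(x-2y)(x-3y) :\, (x-3y)^2 ].\]
On the other side, applying the formula for $\tau$ to $\psi_a([x:y]) = [W:X':Y':Z']$ gives a four-tuple whose entries are $W-2X'+2Z'$, $Z'$, $-Y'-2Z'$, and $X'+4Y'+4Z'$; a straightforward expansion shows these match the four coordinates above. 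This is the one place where real computation is required, but it is routine linear algebra on quadratic forms in $x,y$.

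For part (c), I would define $\Psi_a$ to be $\psi_a^*$ on $\kk(X)$ extended by $\Psi_a(t) = s$, and check that the defining relation $tg = g^\tau t$ of the twisted polynomial ring $\kk(X)[t;\tau^*]$ is preserved. That is, I need $\Psi_a(t)\Psi_a(g) = \Psi_a(g^\tau)\Psi_a(t)$, i.e.\ $s \cdot \psi_a^*(g) = \psi_a^*(g^\tau) \cdot s$. By the definition of $\kk(\PP^1)[s;\nu^*]$, the left side equals $\nu^*(\psi_a^*(g)) \cdot s = (\psi_a \nu)^*(g)\cdot s$, while the right side equals $(\tau \psi_a)^*(g) \cdot s$, and these agree by part (b). Thus $\Psi_a$ is a well-defined algebra homomorphism.

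The main obstacle is the bookkeeping in part (b): there is no conceptual difficulty, but four quadratic polynomials must match up after a change of variables and an application of a linear map on $\PP^3$, and signs must be handled carefully. Parts (a) and (c) are then immediate consequences of the computation in (b) and the quadratic identity $X'Z' = (Y')^2$.
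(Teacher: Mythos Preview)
Your proposal is correct and follows essentially the same approach as the paper. The paper treats (a) and (b) as straightforward computations (deferring the verification of (b) to a Macaulay2 routine in the appendix, whereas you do it by hand), and for (c) the paper verifies multiplicativity directly on products $ht^n \cdot \ell t^m$ rather than checking the single defining relation $tg = g^\tau t$ as you do; these are equivalent, and your version is arguably cleaner.
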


\begin{proof}
(a,b) Both are straightforward.  Part (a) is a direct computation.    
In   Section~\ref{M2} in the appendix, we verify that $ (\psi_a \nu)^* = \nu^* \psi_a^* = \psi_a^* \tau^* = (\tau \psi_a)^*$ as maps from $\kk(X) \to \kk(\PP^1)$.  Thus, (b) holds.  

(c) We have for all $h, \ell \in \kk(X)$ and $n, m \in \NN$ that:
\[ 
\begin{array}{rlll}
\Psi_a(h t^n \ell t^m) 
&= \Psi_a(h \ell^{\tau^n} t^{n+m} )
&= \psi_a^*(h) \psi_a^*(\ell^{\tau^n}) s^{n+m}&\\
&= \psi_a^*(h)  \psi_a^* (\ell)^{\nu^n} s^{n+m} 
&= \psi_a^*(h)  s^n \psi_a^* (\ell )s^{m} 
&= \Psi_a(h t^n) \Psi_a(\ell t^m).
\end{array}
\]
Thus,  $\Psi_a$ is an algebra homomorphism.

\vspace{-.3in}

\end{proof}

\vspace{.05in}

\begin{lemma}[$C_a$] \label{lem:C}
For $a \in \kk$, define the curve 
$$C_a = V(w+6ax+(4+12a)y+(2+6a)z, xz-y^2)~~ \subseteq X.$$
Then, $\psi_a$ defines an isomorphism from $\PP^1 \to C_a$.
\end{lemma}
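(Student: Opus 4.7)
The plan is to check inclusion, identify $C_a$ as a smooth conic, and then recognize $\psi_a$ as (essentially) a Veronese parametrization.

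First, I would verify that $\psi_a(\PP^1) \subseteq C_a$ by direct substitution. Writing the input coordinates on $\PP^1$ as $[s:t]$, the $x,y,z$ components of $\psi_a([s:t])$ are $(s-t)^2,\ -(s-t)(s-2t),\ (s-2t)^2$, so the quadric relation $xz - y^2 = 0$ holds automatically. The linear relation $w + 6ax + (4+12a)y + (2+6a)z = 0$ reduces, after substitution, to a quadratic polynomial in $s,t$ whose coefficients of $s^2$, $st$, $t^2$ each collapse to $0$ by an elementary expansion.

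Second, I would show that $C_a$ is smooth and isomorphic to $\PP^1$. By construction, $C_a = X \cap H_a$, where $H_a$ is the hyperplane $V\bigl(w+6ax+(4+12a)y+(2+6a)z\bigr) \subseteq \PP^3$. The vertex of the quadric cone $X = V(xz-y^2)$ is the point $[1:0:0:0]$; plugging this into the linear form gives $1 \neq 0$, so the vertex lies off $H_a$. Consequently the intersection $C_a$ is a smooth plane conic inside $H_a \cong \PP^2$, and in particular is isomorphic to $\PP^1$. Moreover, since the linear equation allows us to solve for $w$ in terms of $x,y,z$, the projection $C_a \to V(xz-y^2) \subseteq \PP^2_{x,y,z}$ forgetting $w$ is an isomorphism.

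Third, to see $\psi_a$ is an isomorphism onto $C_a$, I would introduce the linear change of variables $u := s-t$, $v := s-2t$, which is an automorphism of $\PP^1_{[s:t]} \to \PP^1_{[u:v]}$. In these new coordinates, the $(x,y,z)$ part of $\psi_a$ is $(u^2,\,-uv,\,v^2)$, the standard Veronese parametrization of $V(xz-y^2) \subseteq \PP^2$, which is a classical isomorphism onto its image. Composing with the inverse of the projection $C_a \to V(xz-y^2)$ (which just attaches the unique $w$ determined by the linear equation), we conclude that $\psi_a$ is an isomorphism $\PP^1 \to C_a$.

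There isn't really a conceptual obstacle here; the only friction is bookkeeping the polynomial expansion in the first step and verifying that the change of variables $(s,t) \mapsto (u,v)$ reconciles $\psi_a$ with the Veronese — both are routine.
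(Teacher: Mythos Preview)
Your argument is correct. The paper's proof takes a slightly more direct route: after the same containment check, it simply writes down an explicit inverse
\[
[w:x:y:z] \longmapsto [2x+y:x+y]
\]
and leaves the verification to the reader. Your approach instead factors $\psi_a$ as (automorphism of $\PP^1$) $\circ$ (Veronese) $\circ$ (inverse of the linear projection $C_a \to V(xz-y^2)$), which is more conceptual and explains \emph{why} the map is an isomorphism rather than just exhibiting the inverse. The two are closely related: in your coordinates $u=s-t,\ v=s-2t$ one computes $2x+y=u(2u-v)$ and $x+y=u(u-v)$, so the paper's inverse is exactly the composite of the Veronese inverse with your linear change of coordinates. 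Either way the computation is routine; your version has the mild advantage of making the smoothness of $C_a$ (via the vertex check) and the uniform independence from $a$ transparent.
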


\begin{proof}
That the image of $\psi_a$ of Lemma~\ref{lem:psi}(a) is contained in $C_a$ is a straightforward verification.  The inverse map to $\psi_a$ is defined by
the birational map $[w:x:y:z]  \mapsto [2x+y:x+y]$; we leave the verification of the details to the reader. 
\end{proof}

\begin{lemma}[$\gamma$] \label{lem:gamma}
Define a map $\gamma:  R \to \kk(\PP^1)[s; \nu^*]$ as follows:  if $h  \in R_n = \kk[x,y]_n$, let
\[ \gamma(h) = \frac{h}{x(x-y)\cdots(x-(n-1)y)}s^n .\]
Then, $\gamma$ is an injective $\kk$-algebra homomorphism.
\end{lemma}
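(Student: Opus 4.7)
The plan is to verify that $\gamma$ is well-defined, then check the multiplicative property on homogeneous elements, then extract injectivity from the fact that the denominator is a nonzero polynomial.

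First, I would check well-definedness. For $h \in R_n = \kk[x,y]_n$, both $h$ and $P_n := x(x-y)\cdots(x-(n-1)y)$ are homogeneous of degree $n$ in $\kk[x,y]$, so their ratio $h/P_n$ is a well-defined element of the degree-zero part of the fraction field, which is $\kk(\PP^1)$. Since $P_n \neq 0$, we may freely form $\gamma(h) = (h/P_n) s^n \in \kk(\PP^1)[s;\nu^*]$. Extend $\kk$-linearly to all of $R$.

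The heart of the argument is the multiplicativity $\gamma(h \ast h') = \gamma(h)\gamma(h')$ for $h \in R_n$, $h' \in R_m$. Recall from Lemma~\ref{Zhang} that multiplication in $R \cong \kk[x,y]^\nu$ satisfies $h \ast h' = h \cdot (h')^{\nu^n}$ as a polynomial of degree $n+m$, while in $\kk(\PP^1)[s;\nu^*]$ one has $(g s^n)(g' s^m) = g \cdot (g')^{\nu^n} s^{n+m}$. The single computation that drives everything is
\[ (P_m)^{\nu^n} \;=\; \prod_{j=0}^{m-1}\bigl(x - (n+j)y\bigr), \qquad \text{so} \qquad P_n \cdot (P_m)^{\nu^n} \;=\; P_{n+m}. \]
Combining this with $\nu^*$-compatibility yields $\gamma(h)\gamma(h') = \bigl(h \cdot (h')^{\nu^n}/P_{n+m}\bigr) s^{n+m} = \gamma(h \ast h')$. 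Since $\gamma(1) = 1$ trivially (empty product and $s^0 = 1$), $\gamma$ is a $\kk$-algebra homomorphism.

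Finally, for injectivity, note that $\kk(\PP^1)[s;\nu^*]$ is $\ZZ$-graded (by powers of $s$), and $\gamma$ sends the degree-$n$ piece $R_n$ into $\kk(\PP^1)\cdot s^n$; hence it suffices to verify injectivity on each $R_n$. But $\gamma(h) = 0$ forces $h/P_n = 0$ in $\kk(\PP^1)$, which forces $h = 0$ in $\kk[x,y]$ since $P_n$ is a nonzero polynomial. There is essentially no obstacle: the only nontrivial point is recognizing that the denominators $P_n$ were chosen precisely so that $P_n \cdot (P_m)^{\nu^n} = P_{n+m}$, making $\gamma$ respect the Zhang-twisted multiplication.
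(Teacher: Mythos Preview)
Your proof is correct and follows essentially the same approach as the paper: both verify multiplicativity via the identity $P_n \cdot (P_m)^{\nu^n} = P_{n+m}$ (the paper writes this out as a single displayed chain of equalities rather than isolating the identity), and both treat injectivity as immediate. Your version is slightly more explicit about well-definedness and the grading argument for injectivity, but the substance is identical.
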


\begin{proof}
Let $h \in \kk[x,y]_n$ and $\ell \in \kk[x,y]_m$.
Then,
\[
\begin{array}{rl}
\medskip

 \gamma(h\ast \ell) &= \gamma (h \ell^{\nu^n}) =  \displaystyle \frac{h \ell^{\nu^n}}{x (x-y)\cdots (x-(n+m-1)y) } s^{n+m} \\
 \medskip
 
&=  \displaystyle \frac{h}{x (x-y) \cdots (x-(n-1)y)} \left(\frac{\ell}{x (x-y) \cdots (x-(m-1)y)}\right)^{\nu^n} s^{m+n} \\
 &=  \displaystyle \frac{h}{x (x-y) \cdots (x-(n-1)y)} s^n \frac{\ell}{x (x-y) \cdots (x-(m-1)y)} s^m 
 =\quad  \gamma(h) \gamma(\ell).
 \end{array}
 \]
So, $\gamma$ is a homomorphism; injectivity is clear.
\end{proof}

\begin{proposition}\label{prop:pullback} Retain the notation of Lemmas~\ref{lem:psi} and~\ref{lem:gamma}.
Let $a \in \kk$.  Then, $\Psi_a \rho = \gamma \lambda_a$ as maps from $U(W_+) \to \kk(\PP^1)[s; \nu^*]$, and $\ker \Psi_a \rho = \ker \lambda_a$.
\end{proposition}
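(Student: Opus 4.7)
The plan is to reduce the proposition to a generator-by-generator check plus injectivity of $\gamma$. Both $\Psi_a \rho$ and $\gamma \lambda_a$ are $\kk$-algebra homomorphisms from $U(W_+)$, and by Lemma~\ref{lem:present}(a), $U(W_+)$ is generated as a $\kk$-algebra by $e_1$ and $e_2$. So it suffices to verify $\Psi_a \rho(e_i) = \gamma \lambda_a(e_i)$ for $i = 1, 2$.

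For $e_1$: we have $\Psi_a \rho(e_1) = \Psi_a(t) = s$, while $\gamma \lambda_a(e_1) = \gamma(u)$. Reading $u$ as $x \in \kk[x,y]_1$, Lemma~\ref{lem:gamma} gives $\gamma(u) = \frac{x}{x} s = s$, so the two agree.

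For $e_2$: we have $\Psi_a \rho(e_2) = \Psi_a(ft^2) = \psi_a^*(f)\, s^2$. On the other side, computing in $R \cong \kk[x,y]^\nu$ via Lemma~\ref{Zhang} and~\eqref{multiplyS}, $\lambda_a(e_2) = (u-av)v$ corresponds to $(x-ay)^\nu y = (x-ay-y)y$... actually, I should be careful: $(u-av)v$ equals $(x-ay)\ast y = (x-ay) y^\nu = (x-ay) y = xy - ay^2 \in R_2$. Then $\gamma \lambda_a(e_2) = \frac{xy-ay^2}{x(x-y)} s^2$. Thus the identity to verify is
\[ \psi_a^*(f) = \frac{xy-ay^2}{x(x-y)}. \]
This is a direct computation: using Lemma~\ref{lem:psi}(a), one substitutes
\[ w \mapsto 2x^2-4xy-6ay^2,\quad x \mapsto (x-y)^2,\quad y \mapsto -(x-y)(x-2y),\quad z \mapsto (x-2y)^2 \]
into the numerator and denominator of $f = \frac{w+12x+22y+8z}{12x+6y}$. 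The numerator simplifies to $6(xy - ay^2)$ and the denominator to $6x(x-y)$; the ratio is exactly $\frac{xy-ay^2}{x(x-y)}$, as required. This expansion is the only computational step of substance; it is routine once the formulas for $\psi_a$ are in hand.

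Finally, the kernel statement is immediate from what we have just proved: $\ker(\Psi_a \rho) = \ker(\gamma \lambda_a) = \ker \lambda_a$, where the second equality follows because $\gamma$ is injective by Lemma~\ref{lem:gamma}. The main (and only real) obstacle is the pullback calculation of $\psi_a^*(f)$; the rest of the argument is formal.
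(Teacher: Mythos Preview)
Your proof is correct and follows essentially the same approach as the paper: reduce to checking the generators $e_1,e_2$ via Lemma~\ref{lem:present}(a), verify the pullback identity $\psi_a^*(f) = \frac{xy-ay^2}{x(x-y)}$, and conclude using injectivity of $\gamma$. The only difference is that the paper delegates the computation of $\psi_a^*(f)$ to a Macaulay2 routine in the appendix, whereas you carry it out by hand (correctly, after the brief self-correction about which factor the twist acts on).
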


\begin{proof}
By Lemma~\ref{lem:present}(a), it suffices verify that the maps $\Psi_a \rho$ and $\gamma \lambda_a$ agree on $e_1$ and $e_2$.  We have:
\[ \Psi_a( \rho(e_1)) = \Psi_a(t)  = s = \gamma(u)  = \gamma( \lambda_a(e_1)).\]
We verify that
\beq\label{two}
\psi_a^*(f)=  \frac{xy-ay^2}{x^2-xy}
\eeq 
in Section~\ref{M2} in the appendix.
Thus, 
\[ \Psi_a(\rho(e_2) )= \psi_a^*(f)s^2 
= \frac{xy-ay^2}{x^2-xy} s^2 = \gamma(uv-av^2) = \gamma \lambda_a(e_2).
\]
The final statement follows from the fact that $\gamma$ is injective (Lemma~\ref{lem:gamma}).
\end{proof}

We now prove Theorem~\ref{thm:kernels}.

\begin{proof}[Proof of Theorem~\ref{thm:kernels}]
By Lemma~\ref{lem:C}, $\psi_a^* h = 0$ if and only if $h |_{C_a} \equiv 0$.
Now, the curves $C_a$ cover an open subset of $X$.
(One way to see this is that, because $\bigcup_a C_a$ is  dense in $X$ and is clearly constructible, by \cite[Exercise II.3.19(b)]{Hartshorne} it contains an open subset of $X$.)
Thus if $h \in \kk(X)$ is in the intersection $\bigcap_a \ker \psi_a^*$, then $h$ vanishes on this open subset and so $h=0$.  
So, $\bigcap_a \ker \Psi_a = \{0\}$. Thus, $\ker \rho = \bigcap_a \ker \Psi_a \rho = \bigcap_a \ker \lambda_a$, 
where the last equality holds by Proposition~\ref{prop:pullback}.

To show that $\ker \phi = \bigcap_a \ker \lambda_a$, define  closed immersions $i_a:  \PP^1 \to \PP^2$ for $a \in \kk$ by $i_a([x:y]) = [x:y: ay]$.  Then, $\im (i_a) = V(z-ay)$, and pullback along $i_a$ induces the ring homomorphism
\[ i_a^*:  \kk[x,y,z] \to \kk[x,y] \quad x \mapsto x,~~ y \mapsto y, ~~z \mapsto ay.\]
The reader may verify that  $i_a \nu = \mu i_a$, and that $i_a^*$ is also  a homomorphism from 
$S = \kk[x,y,z]^{\mu}$ to $R=\kk[x,y]^{\nu}$.  In terms of $u,v,w$, we have
\[ i_a^*(u) = u, \quad i_a^*(v) = v, \quad i_a^*(w) = av.\]
That is, $i_a^* = \eta_a |_S$, where $\eta_a$ was defined in Notation~\ref{not:widehat}.
We see that $i_a^* \phi = \lambda_a$.

As with the first paragraph, the curves $V(z-ay)$ cover an open subset of $\PP^2$:  in fact, $\bigcup_a V(z-ay) \supseteq (\PP^2 \ssm V(y))$.  So $ \bigcap_a \ker i_a^*=\{0\}$.
Thus, 
$ \ker \phi = \bigcap_a \ker i_a^* \phi = \bigcap_a \ker \lambda_a$,
completing the proof.
\end{proof}


\section{The kernel of  $\phi$}\label{PHI}

In this section, we analyze the map $\phi$ from Definition~\ref{idef:maps}.
In particular, we verify part (c) of Theorem~\ref{ithm2}(c). To proceed, recall Notations~\ref{inot0},~\ref{not:Q},~\ref{not1}, and~\ref{not2}.

\begin{theorem} \label{thm:(b)}
The kernel of $\phi$ is generated as a two-sided ideal by $g:=e_1e_5-4e_2e_4+3e_3^2+2e_6$.
\end{theorem}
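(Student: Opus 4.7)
The plan is to show $\ker\phi = (g)$ by first verifying $(g) \subseteq \ker\phi$ (i.e., $\phi(g)=0$) and then matching Hilbert series of $U(W_+)/(g)$ and $B$ to conclude that the induced surjection $\bar\phi\colon U(W_+)/(g) \twoheadrightarrow B$ is an isomorphism.

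First I would compute $\phi(g)$ directly in $S$ using Lemma~\ref{Zhang} and \eqref{multiplyS}: since $\phi(e_n) = (x-(n-1)z)y^{n-1}$ in $\kk[x,y,z]$, one expands $\phi(e_1 e_5) - 4\phi(e_2 e_4) + 3\phi(e_3^2) + 2\phi(e_6)$ via the Zhang-twisted product and checks that the four summands cancel. To compute $\hilb B$, I would use the short exact sequence of graded $\kk$-vector spaces
\[
0 \to I \to B \to A(0) \to 0,
\]
where $I = \phi(\ker\lambda_0) = Qp$ by Lemma~\ref{lem:easy}(c), and $B/I \cong A(0) = \kk + uR$ by Proposition~\ref{prop:(c)}. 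Since $Q$ is a domain (Lemma~\ref{Zhang}) and $p \in Q_4 \ssm \{0\}$, multiplication by $p$ is injective, so $\hilb I = t^4\hilb Q$; combined with $\hilb A(0) = 1 + t/(1-t)^2$, this yields
\[
\hilb B = 1 + \frac{t}{(1-t)^2} + \frac{t^4}{(1-t)^2(1-t^2)}.
\]

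For $\hilb U(W_+)/(g)$ I would use the analogous sequence $0 \to (f)/(g) \to U(W_+)/(g) \to A(0) \to 0$, where $f = e_1 e_3 - e_2^2 - e_4$; the inclusion $(g) \subseteq (f) = \ker\lambda_0$ follows from Proposition~\ref{prop:(d)} once $\phi(g)=0$ is established. The map $\bar\phi$ induces a surjection $(f)/(g) \twoheadrightarrow I$, giving $\hilb (f)/(g) \geq t^4\hilb Q$. The reverse inequality is the crux, and my approach to it, modeled on Lemma~\ref{towards(d)}, is to analyze the exact sequence
\[
0 \to K \to B[-1] \oplus B[-2] \xrightarrow{(u,\phi(e_2))} B \to \kk \to 0,
\]
using that $K \subseteq uB \cap \phi(e_2)B \subseteq uQ \cap \phi(e_2)Q = xy(x-z)\,Q$, a principal right $Q$-module. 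By Proposition~\ref{propappendix}, generators of $K$ correspond to defining relations of $B$ in the free algebra $\kk\langle t_1, t_2\rangle$; I expect these to live in exactly three degrees---the two degrees of the Witt relations from Lemma~\ref{lem:present}(a) and the degree of $g$---which would force $\ker\phi = (g)$.

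The main obstacle will be determining which syzygies in $xy(x-z)\,Q$ actually lift to pairs in $K \subseteq B[-1] \oplus B[-2]$, since $B$ is not a $\mu$-invariant subalgebra of $Q$ and this restriction cuts down the available elements in a nontrivial way. I anticipate some of this analysis will be verified via a Macaulay2 routine in the appendix, in the spirit of Claim~\ref{aclaim1} and Routine~\ref{rout:kernel}.
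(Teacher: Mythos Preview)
Your outline lands on the same syzygy strategy as the paper: analyze $K=\ker\bigl(B[-1]\oplus B[-2]\xrightarrow{(u,\phi(e_2))}B\bigr)$ via the isomorphism $K\cong M:=uB\cap(u-w)vB$, and invoke Proposition~\ref{propappendix} to read off relations for $B$. The computation of $\hilb B$ is correct and coincides with the paper's. However, two points deserve comment.

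\textbf{The Hilbert-series framing is circular.} Your inequality $\hilb\,(f)/(g)\geq t^4\hilb Q$ is immediate from the surjection $(f)/(g)\twoheadrightarrow I$, but the ``reverse inequality'' you call the crux is literally equivalent to $\ker\phi=(g)$: it says the surjection $U(W_+)/(g)\twoheadrightarrow B$ is bijective. So nothing has been reduced; you correctly pivot to the syzygy computation, which is where all the content lies.

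\textbf{The key step is missing.} Knowing only that $M\subseteq uQ\cap\phi(e_2)Q=x(xy-yz)Q$ is too weak: this is a degree-$3$ principal $Q$-module, but $M$ is not a $Q$-module and its generators sit in degrees $5,6,7$. You still need to prove that $M$ is generated as a right $B$-module by \emph{one} element in each of those degrees, with the degree-$5$ and degree-$7$ generators coming from the two Witt relations of Lemma~\ref{lem:present}(a). The paper does this by (i) writing down explicit $b_5,b_6,b_7\in M$ built from the Witt relations and $g$; (ii) computing $\hilb M=t^5(1-t)^{-2}(1-t^2)^{-1}$ exactly from the defining exact sequence; and (iii) bounding $\hilb M'$ from below for $M':=b_5B+b_6B+b_7B$ by splitting along $\eta\colon B\to A(0)$, namely showing $\eta(M')\supseteq a_5R$ and $M'\cap\ker\eta\supseteq hQ$ for a specific degree-$7$ element $h$ (the latter via an induction on $hQ_n$). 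This $\eta$-splitting is the technical heart of the argument; the computer checks (Claim~\ref{claim:final}) only handle low-degree base cases inside that framework, not the whole comparison. Your proposal does not indicate how you would establish that $M'$ exhausts $M$, and the containment in $x(xy-yz)Q$ does not do it.
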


\begin{proof}
First, observe that as $e_1e_5, e_2e_4, e_3^2, e_6$ are elements of the standard basis for $U(W_+)$ (by Lemma~\ref{lem:present}(b)), they  are linearly independent. So, we have that $g \neq 0$.  

Now we verify that $\phi(g) = 0$ by using Lemma~\ref{Zhang} and~\eqref{multiplyS}:
\[
\begin{array}{rl}
\phi(g) &= u(u-4w)v^4 - 4(u-w)v(u-3w)v^3 +3(u-2w)v^2(u-2w)v^2 + 2(u-5w)v^5\\
&=x(x-4z)^{\mu}y^4 - 4(x-z)y(x-3z)^{\mu^2}y^3 + 3(x-2z)y^2(x-2z)^{\mu^3}y^2 + 2(x-5z)y^5\\
&=x(x-y-4z)y^4 - 4(x-z)y(x-2y-3z)y^3 + 3(x-2z)y^2(x-3y-2z)y^2 + 2(x-5z)y^5 ~=0.
\end{array}
\]
\smallskip

We take the following notation for the rest of the proof.

\begin{notation}[$M$, $M'$, $b_5$, $b_6$, $b_7$, $\eta$] \label{not:M}
Consider the right $B$-modules
$$M:=uB \cap (u-w)vB \quad \text{ and } \quad M':=b_5B+b_6B+b_7B,  ~~\text{with}$$
\[
\begin{array}{rl}
b_5 &= (uv-vw)(u^3-6(uv-vw)u +12u(uv-vw)),\\
b_6 &= (uv-vw)(-48(uv-3vw)v^2-36u(uv-2vw)v+u^4),\\
b_7 &= (uv-vw)(u^5-40((uv-vw)^2u-3(uv-vw)u(uv-vw)+3u(uv-vw)^2)).
\end{array}
\]

\vspace{.05in}

\noindent Moreover, take $\eta:B \to A(0)$ to be the map induced by the projection $\eta_0: \wh{S} \twoheadrightarrow \wh{R} = \wh{S}/(w)$ from Notation~\ref{not:widehat}. 
\end{notation}

The remainder of the proof will be established through a series of lemmas. 

\begin{lemma}\label{claim1}
We obtain that $b_5, b_6, b_7\in uB \cap (u-w)vB$. In other words, $M' \subseteq M$.
\end{lemma}

\begin{proof}
Let
\begin{eqnarray}
r_5 &:=& e_2(e_1^3-6 e_2 e_1+12 e_1 e_2),  \label{eq:r5} \\
r_6 &:=& e_2(-48 e_4 - 36 e_1 e_3 + e_1^4), \label{eq:r6}\\
r_7 &:=& e_2(e_1^5-40(e_2^2 e_1 - 3 e_2 e_1 e_2 + 3 e_1 e_2^2)).  \label{eq:r71}
\end{eqnarray}
We have as a consequence of the degree 5 relation of $U(W_+)$ in  Lemma~\ref{lem:present}(a) that
\begin{equation}  \label{eq:r51}
r_5 = e_1(e_1^2e_2-3 e_1 e_2 e_1+3e_2 e_1^2+6 e_2^2) ,
\end{equation}
and as a consequence of the degree 7 relation of $U(W_+)$ in  Lemma~\ref{lem:present}(a) that
\begin{equation} \label{eq:r7}
r_7 = e_1(e_1^4 e_2 - 5 e_1^3 e_2 e_1 + 10 e_1^2 e_2 e_1^2-10e_1 e_2 e_1^3+5 e_2 e_1^4-40 e_2^3).
\end{equation}
Thus $r_5, r_7 \in e_1 U(W_+) \cap e_2 U(W_+)$.  Since $b_5 = \phi(r_5)$ and $b_7 = \phi(r_7)$, these are both in $uB \cap (uv-vw)B$.

Note that $r_6 \in e_2 U(W_+)$, so $b_6 = \phi(r_6) \in (u-w)vB$.  Further,
\[ r_6 = e_1 (-36e_2e_3-18e_5+2e_4e_1-e_3e_1^2+e_2e_1^3)+12g.\]
Thus, $b_6 \in uB$ as well.
\end{proof}

\begin{lemma}\label{newclaim}
Suppose that 
$M = M'$.
Then, $\ker \phi = (g )$ and the theorem holds.
\end{lemma}

\begin{proof}
Let $K$ be the kernel of 
\begin{align*}
\alpha:  B[-1] \oplus B[-2] & \to B \\
(b,b') & \mapsto (ub+(uv-vw)b').
\end{align*}
It is a standard fact  that the map
\[ \beta:  M \to K\]
defined by $\beta( r) =  (u^{-1}r, -(uv-vw)^{-1} r)$
is an isomorphism of graded right $B$-modules, as in the proof of Lemma~\ref{towards(d)}.
Thus, $K$ is generated by $\beta(b_5)$,  $\beta(b_6)$, and $\beta(b_7)$ by the assumption.
By Proposition~\ref{propappendix} in the appendix, the kernel of $\pi_B$ is generated as a 2-sided ideal of $\kk\ang{t_1,t_2}$ by a degree 5 element $q_5$, a degree 6 element $q_6$, and a degree 7 element $q_7$.  We compute $q_5$ and $q_7$ by applying the formula from Proposition~\ref{propappendix} to $\beta(b_5)$ and $\beta(b_7)$, and by using \eqref{eq:r5}-\eqref{eq:r7}. Namely,  take 
\[
\begin{array}{lll}
\tilde{b}^1_1 = t_1^2t_2-3t_1t_2t_1+3t_2t_1^2+6t_2^2, && \tilde{b}^1_2 = -t_1^3+6t_2t_1-12t_1t_2\\
\tilde{b}^2_1 = t_1^4t_2-5t_1^3t_2t_1+10t_1^2t_2t_1^2-10t_1t_2t_1^3 +5t_2t_1^4-40t_2^3 && \tilde{b}^2_2=-t_1^5+40(t_2^2t_1-3t_2t_1t_2+3t_1t_2^2).

\end{array}
\]
So, we have that 
\[
\begin{array}{l}
q_5 = t_1 \tilde{b}^1_1 + t_2 \tilde{b}^1_2 = [t_1,[t_1, [t_1,t_2]]]]+6[t_2,[t_2,t_1]],\\
 q_7 =t_1 \tilde{b}^2_1 + t_2 \tilde{b}^2_2 = [t_1,[t_1,[t_1,[t_1,[t_1,t_2]]]]]+40[t_2,[t_2,[t_2,t_1]]].
 \end{array}
 \]

By Lemma~\ref{lem:present}(a), $q_5$ and $q_7$ generate the kernel of $\pi$. So, 
$ \ker \phi = \pi(\ker \pi_B) = ( \pi(q_6))$.  We see immediately that $(\ker \phi)_6$ is a 1-dimensional $ \kk$-vector space, generated by $\pi(q_6)$.  
Since $g \in (\ker \phi)_6$ is nonzero,  we have $g = \pi(q_6)$ up to scalar multiple. Therefore, $\ker \phi = ( g)$.
\end{proof}

Our goal now is show that $M=M'$; we do this by comparing Hilbert series.  To proceed, we show that:

\begin{lemma}\label{claim2}
 The Hilbert series of $M$ is $t^5(1-t)^{-2}(1-t^2)^{-1}$.
\end{lemma}

\begin{proof}
Since $A(0) = \kk \oplus u R$ we have 
\[ \hilb A(0) = 1 + t (\hilb R) = 1+\frac{t}{(1-t)^2} = \frac{1-t+t^2}{(1-t)^2}.\]

On the other hand, it is well-known that $\hilb Q = \hilb \kk[x,y,yz] =  (1-t)^{-2}(1-t^2)^{-1}$. Since $\lambda_0 = \eta \circ \phi$, we get that $\ker \eta = \phi(\ker \lambda_0)$ (which is denoted by $I$ in Notation~\ref{not3}).
So, by Lemma~\ref{lem:easy}(c),  we get
\[ \hilb \ker \eta  = \frac{t^4}{(1-t)^{2}(1-t^2)}.\]
Then 
\[ \hilb B = \hilb A(0) + \hilb \ker \eta  = \frac{1-t+t^3-t^4}{(1-t)^2(1-t^2)} + \frac{t^4}{(1-t)^2(1-t^2)} = \frac{1-t+t^3}{(1-t)^2(1-t^2)}.\]

Finally, we compute $\hilb M$ from the exact sequence
\[ \xymatrix{
0 \ar[r] &M \ar[r]^(0.3){\beta} &B[-1]  \oplus B[-2] \ar[r]^(0.7){\alpha} &  B \ar[r]& \kk \ar[r]& 0,}\]
where $\alpha, \beta$ are as in the proof of Lemma~\ref{newclaim}.
This gives
\[ \hilb M = (t^2+t-1) (\hilb B) + 1 =\frac{t^5}{(1-t)^2(1-t^2)},\]
as claimed.
\end{proof}

We now provide results on the Hilbert series of $M'$.  

\begin{lemma}\label{newclaim4}
We have that $\hilb \eta(M') \geq t^5(1-t)^{-2}$.
\end{lemma}
\begin{proof}
Let $a_5 := \eta(b_5)$ and $a_6 := \eta(b_6)$.  Then,
\[
\begin{array}{rl}
a_5 & = uvu(u^2-6vu+12uv) = xy(x-2y)[ (x-3y)(x-4y) - 6 y(x-4y)+12(x-3y)y] \\
& = x^2(x-y)(x-2y)y.\\
a_6 & = uvu(u^3-36uv^2-48v^3)  = xy(x-2y)[(x-3y)(x-4y)(x-5y) - 36 (x-3y)y^2-48y^3]\\
& = x^2 (x-y)(x-2y)y(x-11y)\\
& = a_5 (u-6v).
\end{array}
\]
Since $a_5u$ and $a_5(u-6v)$ are in $\eta(M')$ and $u$ and $u-6v$ span $R_1$, we have 
$a_5 R_1 \subseteq \eta(M')$. 
We get that 
$ \eta(M') \supseteq a_5 A(0) + a_5 R_1 A(0)$, as $\eta(M') $ is a right $A(0)$-module and contains $a_5 R_{\leq 1}$. Since $A(0) + R_1A(0)=R$, we obtain that $\eta(M') \supseteq a_5R$.
Now  as $\hilb R = (1-t)^{-2}$, we conclude that $\hilb \eta(M') \geq t^5(1-t)^{-2}$.
\end{proof}

\begin{lemma}\label{finalclaim} 
We have that $\hilb(M' \cap \ker \eta) \geq t^7(1-t)^{-2}(1-t^2)^{-1}$. 
\end{lemma}
\begin{proof} 
Again, recall that $\ker \eta = \phi(\ker \lambda_0)$, which is denoted by $I$ in Notation~\ref{not3}. Moreover by Lemma~\ref{lem:easy}(c), we have that $I = Qp = pQ$, where $p  = v^3w - v^2w^2$.  
Let 
\[ h := (uv-vw)(u+2v)p = (xy-yz)x (y^3z-y^2z^2).\]

Now we proceed by asserting the following: 
\medskip

\noindent {\bf Claim}.
$b_5 Q + b_6 Q + b_7 Q \ni ~x(xy-yz)(xyz+y^2z) = (uv-vw)(u+2v)(u+4v)vw.$
\medskip

\noindent The proof of this claim is provided in the appendix; see Claim~\ref{claim:final}(a).

Since $M' \cap I \supseteq M' I = b_5 Qp + b_6  Qp + b_7  Qp$,
we have 
\beq \label{bar} 
M' \cap I \supseteq (uv-vw)(u+2v)(u+4v)vwpQ = (xy-yz)x(y^3z-y^2z^2)(x+y)yz Q = h(x+y)yz Q. \eeq

We now show by induction that $ M' \cap I \supseteq h Q_n$ for all $n \geq 0$.  

\medskip

\noindent {\bf Claim}. $M' \cap I \supseteq h Q_n$ for $n =0,1,2$.
\medskip

\noindent The proof of this assertion is provided in the appendix; see Claim~\ref{claim:final}(b). We will prove the result for larger $n$ by geometric arguments.  
The maximal graded non-irrelevant ideals of $\kk[x,y,yz]$ are in bijective correspondence with  $\kk$-points of the weighted projective plane $\PP(1,1,2)$ \cite[Example~10.27]{Harris}.
We use the notation $(a:b:c)$ to denote a point of $\PP(1,1,2)$.
Let $$K(n) := (x-ny)\kk[x,y,yz] + (y^2-yz)\kk[x,y,yz],$$ be the graded ideal of polynomials vanishing at $(n:1:1)$.

Suppose now that $ M' \cap I \supseteq h Q_n$ for some $n \geq 2$.
Then, $M' \cap I$ contains
\[
\begin{array}{rl}
h \left[ Q_n u + Q_{n-1} (uv-vw) \right] &=
h \left[(x-(n+7)y) \kk[x,y,yz] + ( (x-(n+6)y)y - yz) \kk[x,y,yz]\right]_{n+1}\\
&=h \left[  (x-(n+7)y) \kk[x,y,yz] + (y^2-yz) \kk[x,y,yz] \right]_{n+1}\\
& =  h K(n+7)_{n+1}.
\end{array}
\]

From \eqref{bar}, we get that $(M'\cap I)_{n+1} \ni h (xyz+y^2z) y^{n-2}$.
Since $(xyz+y^2z) y^{n-2}$ does not vanish at $(n+7:1:1)$, it is not in   $h K(n+7)_{n+1}$.
Thus,
\[  h K(n+7)_{n+1} + \kk h (xyz+y^2z) y^{n-2} = h \kk[x,y,yz]_{n+1} \subseteq M' \cap I,\]
where the equality holds as  $h K(n+7)_{n+1}$ is codimension 1 in $h \kk[x,y,yz]_{n+1}$. Hence, $h Q_{n+1} \subseteq M' \cap I$.

Now by induction, we obtain that $M'\cap I \supseteq hQ$. Since $\hilb Q = (1-t)^{-2}(1-t^2)^{-1}$, we have 
\[ \hilb (M' \cap I ) \geq \frac{t^7}{(1-t)^2(1-t^2)}.\]

\vspace{-.25in}
\end{proof}

Our final lemma is 

\begin{lemma}\label{lem:final}
We have that $\hilb M = \hilb M' = t^5(1-t)^{-2} (1-t^2)^{-1}$.
As a result, $M = M'$.

\end{lemma}

\begin{proof}
Combining Lemmas~\ref{newclaim4} and \ref{finalclaim}, we have
\[ \hilb(M') \geq \frac{t^5}{(1-t)^2} + \frac{t^7}{(1-t)^{2}(1-t^2)} = \frac{t^5}{(1-t)^{2}(1-t^2)}.\]
On the other hand, by Lemmas~\ref{claim1} and~\ref{claim2} we get that
\[ \hilb(M') \leq \ \frac{t^5}{(1-t)^{2}(1-t^2)}.\]
Thus, $\hilb M  =\hilb M'$. Since $M' \subseteq M$ again by Lemma~\ref{claim1}, we conclude that  $M = M'$.
\end{proof}

Theorem~\ref{thm:(b)} now follows from Lemmas~\ref{newclaim} and~\ref{lem:final}.
\end{proof}

\begin{remark}\label{rem:three}
A shorter proof of Theorem~\ref{thm:(b)} follows from the results of \cite{CM07}.  Recall from Notation~\ref{not:widehat} that we may extend $\phi$ to a map $\wh{\phi}:  U(W) \to \wh{S}$, using the same formula  \eqref{phi} for $\wh \phi (e_n)$ with $n \leq 0$.  Then \cite[Theorem~1.3]{CM07} and \eqref{boff}, together with Theorem~\ref{thm:kernels}, give that $\ker \wh{\phi} = (e_{-1} e_3 -4 e_0 e_2 + 3 e_1^2+ 2e_2)$.  The reader may verify that
\[ \ad(e_{-1}^4)(g) = [e_{-1},[e_{-1},[e_{-1},[e_{-1},g]]]] = 24(e_{-1} e_3 -4 e_0 e_2 + 3e_1^2+2 e_2).\]
Since $\wh{\phi}(g) =0$, we have $(g) \subseteq \ker \wh{\phi} = (e_{-1} e_3 -4 e_0 e_2 + 3e_1^2+2 e_2) \subseteq (g)$, so all are equal.
\end{remark}


\section{A partial result on chains of two-sided ideals}\label{ACC}

It is not known whether $U(W_+)$ satisfies the ascending chain condition (ACC) on two-sided ideals; see Question~\ref{ques:ACC}. We do not answer this question here; however, we prove the partial result that the non-noetherian factor $B$ of $U(W_+)$ does have ACC on two-sided ideals.

Recall Notations~\ref{inot0},~\ref{not:Q}, \ref{not1}; in particular, $Q$ is the subalgebra of $S$ generated by $u, v, vw$.
Throughout, we consider $B$ as a subalgebra of $Q$.  
We begin by proving:
\begin{lemma}\label{lem:normalfact}
Let $h$ be a nonzero, homogeneous, normal element of $Q$,
and let $a \in \kk$.
Then, the $Q$-bimodules
\[N := hQ/hvQ \quad \text{and} \quad M_{a} = hQ / h(vw-a v^2)Q \]
are 
 noetherian $B$-bimodules under the action induced from $Q$.
 \end{lemma}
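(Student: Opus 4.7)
The plan is to reduce the bimodule noetherianness of $N$ and $M_a$ to one-sided module-noetherianness over the factor rings $A(a)$, exploiting the normality of $h$ and the structure theorem Proposition~\ref{prop:(c)}. Since $h$ is normal in $Q$, left multiplication by $h$ gives a right $Q$-module isomorphism $Q \xrightarrow{\sim} hQ$ identifying $vQ$ with $hvQ$ and $(vw-av^2)Q$ with $h(vw-av^2)Q$; right multiplication by $h$ gives an analogous isomorphism of left $Q$-modules. Using Lemma~\ref{Zhang} and the fact that both $v$ and $vw-av^2$ are $\mu$-fixed, we compute
\[ Q/vQ \;\cong\; \kk[x], \qquad Q/(vw-av^2)Q \;\cong\; \kk[x,y]^\nu \;=\; R, \]
so $N$ is identified with $\kk[x]$ and $M_a$ with $R$ as right $Q$-modules \lp and, via the other isomorphism, as left $Q$-modules\rp.

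Under the right-module identification, the right $B$-action on $N$ factors through the surjection $B \twoheadrightarrow \kk[u] \cong \kk[x]$ \lp as $u \mapsto x$ and $uv-vw \mapsto 0$\rp, and the right $B$-action on $M_a$ factors through $B \twoheadrightarrow A(a)$ \lp as $u \mapsto u$ and $uv-vw \mapsto uv - av^2 = \lambda_a(e_2)$\rp. An analogous statement holds on the left using the other identification. Now $N \cong \kk[x]$ is cyclic over the noetherian ring $\kk[u]$, so $N$ is noetherian as a right $B$-module. For $M_a$: by Proposition~\ref{prop:(c)}, $A(a)$ is noetherian on both sides, so it suffices to show that $R$ is finitely generated as a one-sided $A(a)$-module.

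This finite generation is immediate for $a \neq 0,1$, since $A(a)_{\geq 4} = R_{\geq 4}$. For $a = 0$, a short calculation in the Jordan plane using $A(0) = \kk + uR$ gives $R = A(0) + vA(0)$, so $R$ is finitely generated as a right $A(0)$-module. For $a = 1$, the symmetric computation yields $R = A(1) + A(1)v$, making $R$ finitely generated as a left $A(1)$-module. In every case, $M_a$ is noetherian as a one-sided $B$-module, and since every sub-$B$-bimodule is in particular a one-sided $B$-submodule, the ascending chain condition on sub-$B$-bimodules follows. The main technical work lies in correctly tracking the $B$-bimodule structure through the normalization twist from $h$, and in the explicit Jordan-plane calculations for the non-generic cases $a = 0$ and $a = 1$.
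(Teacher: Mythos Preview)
Your treatment of $M_a$ is correct and matches the paper's argument exactly: the quotient $Q/(vw-av^2)Q$ is identified with $R$, the $B$-action factors through $A(a)$, and finite generation of $R$ over $A(a)$ on the appropriate side (right for $a\neq 1$, left for $a=1$) gives one-sided noetherianity, hence bimodule noetherianity.

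There is, however, a genuine error in your handling of $N$. You claim $Q/vQ \cong \kk[x]$ and that $uv-vw \mapsto 0$. This is false: while $uv = v(u+v) \in vQ$, the element $vw$ does \emph{not} lie in $vQ$. In the Zhang-twist picture $Q \cong \kk[x,y,yz]^\mu$, the generator $yz$ is algebraically independent of $x$ and $y$, so $\kk[x,y,yz]$ is a polynomial ring in three variables; the ideal $vQ$ corresponds to $y\,\kk[x,y,yz]$, and $yz \notin y\,\kk[x,y,yz]$ because $z \notin \kk[x,y,yz]$. Hence $Q/vQ \cong \kk[s,t]$ is a polynomial ring in \emph{two} variables, with $s$ the image of $u$ and $t$ the image of $vw$ (equivalently, $-t$ the image of $uv-vw$). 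Your argument that $N$ is noetherian over $B$ via the one-variable ring $\kk[u]$ therefore collapses: $N$ is not cyclic over $\kk[u]$.

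The repair is immediate and is exactly what the paper does: since $u$ and $uv-vw$ generate $B$ and their images $s$, $-t$ generate $\kk[s,t]$, the map $B \to Q/vQ$ is \emph{surjective}. Thus $N \cong hQ/hvQ \cong \kk[s,t]$ is a cyclic module over the noetherian commutative ring $\kk[s,t]$, hence noetherian as a left $B$-module, hence as a $B$-bimodule.
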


\begin{proof}
We remark that any normal element of $Q$ must be in the commutative subalgebra $\kk[v,vw]$, and thus, must commute with $v$ and $vw$.  
In particular, 
$vQ N  = 0$
 and $(vw-av^2)Q M_a = 0= M_a (vw-av^2)Q $.  

Let $\theta:  Q \to Q/vQ$ be the canonical projection.
 (Note that $vw \not \in \ker \theta$.) Since $u(vw)-(vw)u = 2 v^2w$ is contained in $\ker \theta$, the image $Q/vQ$ is commutative. It is easy to see that $Q/vQ \cong \kk[s,t]$ under the identification $s = \theta(u)$, $t = \theta(vw) = \theta(uv-vw)$.
Note that $s = \theta(\phi(e_1))$ and $t = \theta(\phi(e_2))$ 
are in $B$. So, $\theta(B) = Q/vQ$.
Thus, a left  $B$-submodule of $hQ/hvQ$ is simply an ideal of $\kk[s,t]$. So, $hQ/hvQ$ is noetherian as a left $B$-module.  
As chains of $B$-bimodules are also chains of left $B$-modules, $hQ/hvQ$ is also a noetherian $B$-bimodule.

Now define an algebra homomorphism $\delta:  Q \to R$ by $\delta(u) = u$, $\delta(v) = v$, and $\delta(vw) = a v^2$. (Note that $\delta = \eta_a|_Q$ from Notation~\ref{not:widehat}.)
It is easy to see that $\ker \delta = (vw-a v^2)Q$ and that $\delta$ is surjective.
Note also that $\delta(\phi(e_1))  = u$ and  $\delta(\phi(e_2)) = uv-a v^2$.  
Thus, $\delta(B) = A(a) $ as subalgebras of $R$.
If $a \neq 0,1$, then by Proposition~\ref{prop:(c)}, $A(a) \supseteq R_{\geq 4}$ is noetherian, and $R$ is a finitely generated right $A(a)$-module.   
If $a = 0$, then $R = A(0) + vA(0)$ is again a finitely generated right $A(0)$-module, and $A(0)$ is noetherian.  
Thus for $a \neq 1$, $M_a$ is also a finitely generated right $A(a)$-module. So, $M_a$ is noetherian as a  right $B$-module, let alone a $B$-bimodule.

If $a = 1$ then we have, similarly, that $ \delta(B) = A(1)$ is noetherian, and that $R = A(1) + A(1) v$ is a finitely generated left $A(1)$-module.   It follows that $M_a$ is a finitely generated left $A(a)$-module. So, $M_a$ is noetherian as a left $B$-module, and again as a $B$-bimodule.
\end{proof}

We now use geometric arguments to show:
\begin{proposition}\label{prop:noeth2}
Suppose that $\kk$ is algebraically closed, and let $K \subseteq Q$ be  a nonzero graded ideal.  Then, $Q/K$ is a noetherian $B$-bimodule.
\end{proposition}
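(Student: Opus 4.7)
The plan is to reduce the proposition to Lemma~\ref{lem:normalfact} by locating a suitable normal element inside $K$. Specifically, I would establish two facts:  (i) every nonzero graded two-sided ideal $K$ of $Q$ meets the commutative subalgebra $\kk[v,vw]\subseteq Q$ nontrivially, and (ii) every nonzero homogeneous element of $\kk[v,vw]$ factors, over the algebraically closed field $\kk$, as a product of the privileged normal elements $v$ and $vw-\alpha v^2$ that appear in the lemma.

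For (i), take a nonzero homogeneous $f\in K$ expressed in the PBW basis $\{u^a v^b (vw)^c\}$, and let $n$ be the largest $u$-exponent that occurs.  The key calculation, which follows easily from the Jordan-plane relation $[u,v]=v^2$ (say via Lemma~\ref{Zhang} and~\eqref{multiplyS}), is
\[ [v, u^a] \;=\; -a\,(u-v)^{a-1}\,v^2. \]
Since $v$ commutes with both $v$ and $vw$, this gives $[v, u^a v^b(vw)^c] = -a(u-v)^{a-1} v^{b+2}(vw)^c$, whose leading PBW term is $-a\,u^{a-1} v^{b+2}(vw)^c$.  Thus $\ad_v := [v,-]$ strictly decreases $u$-degree by one on every monomial with $u$-exponent at least one, and annihilates $\kk[v,vw]$ entirely.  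Applying the same recursion with $u$ replaced by $u-v,\, u-2v,\,\ldots$ and using that $v^2 g(v,vw)$ commutes with $v$, an easy induction yields
\[ \ad_v^{\,n}(f) \;=\; (-1)^n\, n!\, v^{2n}\, g_n(v,vw), \]
where $g_n \in \kk[v,vw]$ is the $u^n$-coefficient of $f$, and hence is nonzero by the choice of $n$.  Since $\ad_v$ preserves $K$, this produces a nonzero element of $K\cap\kk[v,vw]$.

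For (ii), note that $\kk[v,vw]$ is the weighted polynomial ring in $v$ (weight $1$) and $vw$ (weight $2$).  A homogeneous element of odd weight is automatically divisible by $v$, while in even weight $2k$ it becomes a polynomial $P(v^2,vw)$ of ordinary degree $k$ in two variables, which factors completely over $\kk$ into linear pieces of the form $v^2$ or $vw-\alpha v^2$.  So pick $0\neq h\in K\cap\kk[v,vw]$ and write $h = h_1 h_2 \cdots h_n$ with each $h_i$ equal to $v$ or to some $vw-\alpha_i v^2$, and set $H_i := h_1 \cdots h_i$.  Each $H_i$ lies in $\kk[v,vw]$ and is therefore normal in $Q$, so the descending chain of two-sided ideals $Q \supseteq H_1 Q \supseteq H_2 Q \supseteq \cdots \supseteq H_n Q = hQ$ yields a finite filtration of $Q/hQ$ by $B$-sub-bimodules.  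Each successive quotient $H_i Q / H_i h_{i+1} Q$ is, with normal element $H_i$, precisely the module $N$ (if $h_{i+1}=v$) or $M_{\alpha_{i+1}}$ (if $h_{i+1}=vw-\alpha_{i+1}v^2$) of Lemma~\ref{lem:normalfact}, and so is noetherian as a $B$-bimodule.  Since extensions of noetherian bimodules are noetherian, $Q/hQ$ and hence its quotient $Q/K$ are noetherian $B$-bimodules.

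The principal obstacle is step~(i): showing that every nonzero graded two-sided ideal of $Q$ meets the normalizing subalgebra $\kk[v,vw]$.  This is the one place where the noncommutative structure of $Q$ is genuinely exploited, through the iterated $\ad_v$ calculation above.  Once (i) is in place, the rest is a clean filtration argument built on Lemma~\ref{lem:normalfact}, and the assumption that $\kk$ is algebraically closed enters only in the factorization of~(ii).
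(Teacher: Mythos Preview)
Your proof is correct and, in fact, cleaner than the paper's.  Both arguments ultimately reduce to Lemma~\ref{lem:normalfact} via a factorisation of a normal element of $\kk[v,vw]$, but the routes to that element are quite different.

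The paper works in the commutative shadow $T=\kk[x,y,yz]$ of $Q$ and takes $h$ to be the greatest common divisor of $K_n$ for $n\gg 0$; it then writes $K=hJ$ with $J$ coprime, handles $Q/hQ$ exactly as you do, and must still deal with the remaining piece $hQ/hJ$.  For this last step the paper invokes the geometry of the weighted projective plane $\PP(1,1,2)$: it analyses the vanishing loci $Y_n$ of $J_n$, uses the $\mu$-action to force these down to $\mu$-fixed points, and then applies the Nullstellensatz to conclude that $y^n\in J$ for some $n$, whence another application of Lemma~\ref{lem:normalfact} finishes.

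Your approach sidesteps all of this.  By taking $h$ to be \emph{any} nonzero homogeneous element of $K\cap\kk[v,vw]$ (produced by the iterated $\ad_v$ computation), you immediately have $hQ\subseteq K$, so $Q/K$ is a quotient of $Q/hQ$ and the filtration argument suffices with no residual piece to analyse.  The $\ad_v$ calculation is the genuinely new idea; it is easily made rigorous via the Zhang twist (where $\ad_v$ becomes $F\mapsto y(F^\mu-F)$, visibly lowering $x$-degree by one with leading coefficient multiplied by $-n y^2$), and your formula $\ad_v^{\,n}(f)=(-1)^n n!\,v^{2n}g_n$ is exactly right once one notes that the $u^n$-coefficient in the PBW basis agrees with the $x^n$-coefficient in $\kk[x,y,yz]$.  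The paper's approach extracts slightly finer structural information about $K$ (the normal-part/coprime-part decomposition), but for the proposition as stated your argument is shorter and avoids the algebraic-geometry machinery entirely.
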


\begin{proof}
Let $T$ be the commutative ring $\kk[x,y,yz]$.  
We consider $K$ as a subset of $T$, since (via Lemma~\ref{Zhang}) $Q = T^\mu$ and $T$ have the same underlying vector space.  
For all $n, m \in \NN$, we have
\beq \label{star1}
K_{n+m} \supseteq K_n Q_m = K_n (T_m)^{\mu^n} = K_n T_m,
\eeq
and so $K$ is also an ideal of $T$.
Further,
\beq \label{star2}
K_{n+m} \supseteq Q_m K_n = T_m (K_n)^{\mu^m}.
\eeq
If $T$ were generated in degree 1,  one could obtain directly from \eqref{star1}, \eqref{star2} that  $K_n$ is $\mu$-invariant for $n \gg 0$ (or see \cite[Lemma~4.4]{AS}).  A similar statement holds in our case; however, a proof would take us too far afield so we work more directly with the graded pieces of $K$.

Choose $n_0$ so that $K_{n_0} \neq 0$.
For all $n \geq n_0$, let $h_n \neq 0 $ be a greatest common divisor of $K_n$, considered as a  subset of $T_n$.
By \eqref{star1}, $h_{n+1} \ | \ h_n x, h_n y$.
Since $x,y$ have no common divisor, we have $h_{n+1} \ |\  h_n$ for all $n \geq n_0$.
This chain of divisors must stabilize, and thus there is $n_1 \geq n_0$ so that $h_{n+1} h_n^{-1} \in \kk$ for $n \geq n_1$. 
Let $h := h_{n_1}$.

By \eqref{star2}, $h \ | \ \mu^m(h) $ for all $m \in \NN$, so $h$ is an eigenvector of $\mu$.  Thus, $h$ is normal in $Q$.  
 Since $h \ | \ f$ for all $f \in K$, we can write $K = hJ$ 
 for some $J \subseteq Q$. Since $h$ is normal,  $J$ is again an ideal  of $Q$. So,  \eqref{star1}, \eqref{star2} apply to $J$.

Since $h\in \kk[v,vw]$ 
and $\kk$ is algebraically closed, we have
\[ h = (vw-a_1 v^2)\cdots (vw-a_n v^2) v^k\]
for some $n, k \in \NN$ and $a_1, \dots, a_n \in \kk$.
Applying Lemma~\ref{lem:normalfact} repeatedly, we obtain that 
 $Q/hQ$ is a noetherian $B$-bimodule.

From the  exact sequence 
\[ 0 \to hQ/hJ \to Q/K \to Q/hQ\to 0,\]
it suffices to prove that $hQ/hJ$ is a noetherian $B$-bimodule. 
We make a geometric argument to do so.

Graded ideals of $T$ correspond to subschemes of the weighted projective plane $\PP(1,1,2)$. Note that $\mu$ acts on $\PP(1,1,2)$ by $\mu(a:b:c) = (a-b:b:c)$.  

Let $Y_n $ be the subset of $\PP(1,1,2)$ defined by the vanishing of the polynomials in $J_n$, considered now as a subset of $T$.
By the definition of $h$, for $n \geq n_1$ the polynomials in $J_n$ have no nontrivial common factor, and so $\dim Y_n \leq 0$.
By \eqref{star1}, \eqref{star2}, 
we have
\[ Y_{n+1} \subseteq Y_n \cap \mu(Y_n)\]
for $n \geq n_1$.
It follows that there exists $n_2 \geq n_1$ so that 
\beq\label{Y}
Y_{n+1} = Y_n = \mu(Y_n)
\eeq
 for $n \geq n_2$. 
 Let $Y :=Y_{n_2}$. Since $\mu$-orbits in $\PP(1,1,2)$ are either infinite or trivial, each point of $Y$ is $\mu$-invariant.  
 Note that $Y$ is the subset of $\PP(1,1,2)$ defined by $J$, considered as an ideal of $T$.

Let $P$ be an associated prime of $J$.  
Since $J$ is graded, $P$ is graded.  
By using the Nullstellensatz, with the fact that $\dim Y \leq 0$, 
 we get that either $P = T_{+}$, or $P$ defines some point $(a:b:c) \in Y$.
In the first case, certainly $y \in P$.
In the second case, $(a:b:c) = \mu(a:b:c) = (a-b:b:c)$ and so  $b = 0$.
Again, $y \in P$.

The radical $\sqrt J$ is the intersection of the associated primes of $J$.  Since $y$ is contained in all associated primes, $y \in \sqrt{J}$.
Thus, there is some $n$ so that $y^n = v^n \in J$. So, $hQ/hJ$ is a factor of $hQ/hv^nQ$.  
Applying Lemma~\ref{lem:normalfact} again, we see that $hQ/hJ$ is a noetherian $B$-bimodule, as desired.
\end{proof}

We now prove Proposition~\ref{iprop3}.  In fact, we show:
\begin{proposition}\label{prop3}
The ring $Q$ is noetherian as a $B$-bimodule.  As a consequence, $B$ satisfies ACC on two-sided ideals.
\end{proposition}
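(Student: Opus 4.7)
The plan is to first reduce to the case that $\kk$ is algebraically closed via faithfully flat base change, since graded $B$-bimodule noetherianness descends along $\kk \to \bar\kk$ and Proposition~\ref{prop:noeth2} requires this hypothesis. Under this assumption, given an ascending chain $N_1 \subseteq N_2 \subseteq \cdots$ of graded $B$-subbimodules of $Q$, I would form the induced chain of two-sided $Q$-ideals $K_i := QN_iQ$. Since $Q$ is a Zhang twist of the noetherian polynomial ring $\kk[x,y,yz]$ by Lemma~\ref{Zhang}, $Q$ itself is noetherian, so the $K_i$ stabilize to some two-sided $Q$-ideal $K$. If $K = 0$ the chain is trivially constant; otherwise $K$ is nonzero, and Proposition~\ref{prop:noeth2} gives that $Q/K$ is a noetherian $B$-bimodule. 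Hence the images $\overline{N_i}$ in $Q/K$ stabilize, which (using $N_i \supseteq N_{i_0}$) translates to the identity $N_i = N_{i_0} + (N_i \cap K)$ for all $i \geq i_0$. This reduces the main claim to showing that every nonzero two-sided $Q$-ideal $K$ is itself noetherian as a $B$-bimodule.

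For that reduction, I would iterate the argument along the $v$-adic filtration $K \supseteq vK \supseteq v^2 K \supseteq \cdots$. Each $v^n K$ is a nonzero two-sided $Q$-ideal (using that $v$ is a normal nonzerodivisor in $Q$), and $\bigcap_n v^n K = 0$ by the domain property. Proposition~\ref{prop:noeth2} shows each $Q/v^n K$ is a noetherian $B$-bimodule, so each $B$-subbimodule $K/v^n K$ of $Q/v^n K$ is noetherian as well. Because in any fixed degree $d$ we have $(v^n K)_d = 0$ once $n$ is sufficiently large (depending on $d$ and the minimal nonzero degree of $K$), the chain of subspaces $(N_i \cap K)_d$ must stabilize in each graded degree separately.

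The main obstacle is upgrading this degree-by-degree stabilization to uniform stabilization of the chain $N_i \cap K$ across all degrees simultaneously. I would close this gap by showing the union $\wt{N} := \bigcup_i (N_i \cap K)$ is in fact finitely generated as a $B$-bimodule: its image in the noetherian $B$-bimodule $Q/vK$ is generated by finitely many homogeneous elements, which lift to a bounded-degree subset contained in some fixed $N_{i_0}$; combined with a descending induction along the $v$-adic filtration and the finite dimensionality of graded pieces of $Q$, this produces generators of $\wt N$ in uniformly bounded degrees, forcing $\wt N = N_{i_1}$ for some $i_1$. The stated consequence then follows immediately, since every two-sided ideal of $B$ is in particular a graded $B$-subbimodule of $B \subseteq Q$, and therefore inherits ACC from the noetherian $B$-bimodule structure on $Q$.
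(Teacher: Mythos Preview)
Your first paragraph is fine, but the argument breaks down in the third paragraph. The assertion that the $v$-adic descent ``produces generators of $\wt N$ in uniformly bounded degrees'' is not justified and does not follow from what you have. Knowing that each $K/v^n K$ is a noetherian $B$-bimodule, together with $\bigcap_n v^n K = 0$ and finite-dimensionality of graded pieces, gives only degree-by-degree stabilization. Your descending induction yields, for each $n$, a finite generating set for $\wt N$ modulo $v^n K$, but nothing controls the degrees of those generators as $n$ grows; the sequence of bounds may be unbounded. (As a sanity check on the abstract shape of the argument: take $K = \kk[t]$ and $B = \kk$; every $\kk[t]/(t^n)$ is noetherian over $\kk$, yet $\kk[t]$ is not.) So there is no termination mechanism, and the proof of ``$K$ is a noetherian $B$-bimodule'' is incomplete.

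The paper closes exactly this gap by invoking a structural fact you never use: Lemma~\ref{lem:easy}(c) gives $pQ = Qp \subseteq B$, so $B$ contains a nonzero two-sided ideal of $Q$. For a nonzero graded $B$-subbimodule $K \subseteq Q$ one then has $K \supseteq (Qp)K(pQ) = QpKpQ$, a nonzero two-sided $Q$-ideal; noetherianity of $Q$ gives finite-dimensional $V \subseteq K$ with $QpKpQ = QpVpQ$, and the key point is that $QpVpQ \subseteq BVB$ because $Qp, pQ \subseteq B$. Proposition~\ref{prop:noeth2} then makes $Q/QpVpQ$ a noetherian $B$-bimodule, so $K/QpVpQ$ is finitely $B$-generated by some $W$, and $K = B(V\!+\!W)B$. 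The inclusion $pQ \subseteq B$ is precisely the missing termination mechanism your $v$-adic approach lacks.
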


\begin{proof}
Let $\kk'$ be an algebraic closure of $\kk$.
If $Q \otimes_\kk \kk'$ were a noetherian bimodule over $B \otimes_\kk \kk'$, then  $Q$ would be a noetherian $B$-bimodule; this holds as $\kk'$ is faithfully flat over $\kk$  \cite[Exercise~17T]{GW}. So it suffices to prove the result in the case that $\kk$ is algebraically closed.
By standard arguments, it is sufficient to show that $Q$ satisfies ACC on {\em graded} $B$-subbimodules,
 or equivalently, that any nonzero graded $B$-subbimodule of $Q$ is finitely generated.  

Let $K$ be a nonzero graded $B$-subbimodule of $Q$.  
Since $B \supseteq Qp = pQ$ by Lemma~\ref{lem:easy}(c), we have that $K=BKB \supseteq QpKpQ$.  Since $Q$ is noetherian, there is  a finite dimensional graded vector space $V \subseteq K$ with $QpKpQ = QpVpQ$.  

By Proposition~\ref{prop:noeth2}, the $B$-bimodule $Q/QpVpQ$ is noetherian.
The $B$-subbimodule $K/QpVpQ$ of 
\linebreak
$Q/QpVpQ$ is thus finitely generated. So, there is a finite-dimensional vector space $W \subseteq K$ so that $K = BWB + QpVpQ \subseteq BWB + BVB$.
As $V, W \subseteq K$, certainly $K \supseteq BWB + BVB$.
 Thus, $K$ is finitely generated by $ V + W$, as needed.  
\end{proof}


\section{Appendix}

We first give 
a general result from ring theory to which we were not able to find a reference; it is the  converse to \cite[Lemma~2.11]{Rog-notes}. We then finish by presenting Maple and Macaulay2 routines and proofs of computational claims asserted above.

\subsection{A result in ring theory} 
Consider the following setting. Let $T= \kk\ang{t_1,\dots, t_n}$ be the free algebra.  Set $\deg t_i = d_i \in \ZZ_{\geq1}$, and grade $T$ by the induced grading.
Suppose that $\pi: T \to A$ is a surjective  homomorphism of graded algebras, and let $a_i = \pi(t_i)$.  By definition, the $a_i$ generate $A$ as an algebra.   Let $J = \ker \pi$. Consider the map 
$$\xymatrix{ \alpha:  A[-d_1] \oplus \dots \oplus A[-d_n] \ar[rr]^(0.7){(a_1, \dots, a_n)} && A}$$
that sends $(r_1, \dots, r_n) \mapsto \sum_{i=1}^n a_i r_i$.  Note $\alpha$ is a homomorphism of graded right $A$-modules, and set $K = \ker \alpha$.
Let $b^1, \dots, b^m$ be homogeneous elements of $K$, where $b^j = (b^j_1, \dots, b^j_n) \in A[-d_1] \oplus \dots \oplus A[-d_n]$.
For all $1 \leq i \leq n, 1\leq j \leq m$, choose homogenous  elements $\wt{b}^j_i \in T$ so that $\pi(\wt{b}^j_i ) =b^j_i$.
Let $q_j = \sum_{i=1}^n t_i \wt{b}^j_i$.  (Note that the $q_i$ are homogeneous; in fact, $\deg q_j = \deg b^j$.)

\begin{proposition}\label{propappendix}
Retain the notation above.
If $\{b^1, \dots, b^m\}$ generate $K$ as a right $A$-module, then \linebreak $\{q_1, \dots, q_m\}$ generate $J$ as an ideal of $T$.
\end{proposition}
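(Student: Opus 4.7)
The plan is to prove the two containments $(q_1,\dots,q_m) \subseteq J$ and $J \subseteq (q_1,\dots,q_m)$ separately, with the first being almost immediate and the second proceeding by induction on degree. For the easy direction, I would simply apply $\pi$ to $q_j = \sum_i t_i \tilde{b}^j_i$ to get $\pi(q_j) = \sum_i a_i b^j_i = \alpha(b^j) = 0$, so each $q_j \in J$, and hence the two-sided ideal they generate lies in $J$.

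For the reverse inclusion, let $J' = (q_1,\dots,q_m)$ and let $f \in J$ be homogeneous; I would argue by induction on $\deg f$. The base case is trivial since $J \cap T_0 = 0$ (as $\pi$ is the identity on scalars). For the inductive step, since $f$ has positive degree, I can write $f = \sum_i t_i f_i$ with $f_i \in T$ homogeneous of degree $\deg f - d_i$. Applying $\pi$ gives $\sum_i a_i \pi(f_i) = 0$, so $(\pi(f_1),\dots,\pi(f_n)) \in K$. Using the hypothesis that the $b^j$ generate $K$ as a right $A$-module, I can write this tuple as $\sum_j b^j c_j$ for homogeneous $c_j \in A$; lift each $c_j$ to $\tilde{c}_j \in T$.

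The key manipulation is then to rewrite
\[
f = \sum_i t_i f_i = \sum_j\Bigl(\sum_i t_i \tilde{b}^j_i\Bigr)\tilde{c}_j + \sum_i t_i\Bigl(f_i - \sum_j \tilde{b}^j_i \tilde{c}_j\Bigr) = \sum_j q_j \tilde{c}_j + \sum_i t_i g_i,
\]
where $g_i := f_i - \sum_j \tilde{b}^j_i \tilde{c}_j$. The first sum lies in $J'$ by construction. A direct check using the definition of the $c_j$ shows $\pi(g_i) = \pi(f_i) - \sum_j b^j_i c_j = 0$, so $g_i \in J$, and since each $d_i \geq 1$, we have $\deg g_i < \deg f$. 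By the inductive hypothesis, $g_i \in J'$, and therefore $f \in J'$.

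There is really no serious obstacle here: the main care needed is simply making sure the lifts $\tilde{c}_j$ can be chosen homogeneously so that the degree bookkeeping works and the induction terminates. Choosing homogeneous lifts is possible because $K$ is a graded submodule (the $b^j$ were assumed homogeneous and generate $K$), so the tuple $(\pi(f_1),\dots,\pi(f_n))$, being homogeneous, is expressible as a sum of $b^j$ times homogeneous elements of $A$.
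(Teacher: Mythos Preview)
Your proof is correct and follows essentially the same approach as the paper's: both establish $J' \subseteq J$ by applying $\pi$ to the $q_j$, and both prove the reverse containment by induction on degree, writing $f = \sum_i t_i f_i$, using that $(\pi(f_i))$ lies in $K$ to express it via the generators $b^j$, and then splitting $f$ into a combination of the $q_j$ plus terms $t_i g_i$ with $g_i \in J$ of strictly smaller degree. Your final paragraph on choosing homogeneous lifts is a nice clarification that the paper leaves implicit.
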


\begin{proof}
Let $J'$ be the ideal of $T$ generated by $q_1, \dots, q_m$.  Since
$ \pi(q_j) = \sum_i \pi(t_i) \pi(\wt{b}^j_i) = \sum_i a_i b^j_i = \alpha(b^j)=0,$ we get that
$J' \subseteq J$.

We prove by induction that $J'_k = J_k$ for all $k \in \NN$.  Certainly $J'_0 = J_0 = 0$.  
Assume that we have shown that $J'_{<k} = J_{<k}$, and let $h \in J_k$.
Because $T$ is generated by $t_1, \dots, t_n$, there are homogeneous  elements $f_1, \dots, f_n \in T$, with $\deg f_i =k-d_i$, so that $h  = \sum_i t_i f_i$.
Then,
\[ 0 = \pi(h) = \sum_{i=1}^n a_i \pi(f_i) = \alpha(\pi(f_1), \dots, \pi(f_n)).\]
Since the $b^j$ generate $K = \ker \alpha$, there are homogeneous elements $r_1, \dots, r_m \in A$ 
with 
$(\pi(f_1), \dots, \pi(f_n)) = \sum_{j=1}^m b^j r_j.$
Let $\wt{r}_1, \dots, \wt{r}_m$ be homogeneous lifts of $r_1, \dots, r_m$.
Then for each $i$ we have
\[ \pi(f_i)  = \sum_j b^j_i r_j = \sum_j \pi(\wt{b}^j_i \wt{r}_j).\]
So,  $f_i - \sum_j \wt{b}^j_i \wt{r}_j \in J = \ker \pi$.  Since $\deg f_i = k - d_i < k$, each $f_i - \sum_j \wt{b}^j_i \wt{r}_j \in J'$.
Thus $J'$ contains
\[
\sum_i t_i f_i - \sum_i t_i \left(\sum_j \wt{b}^j_i \wt{r}_j\right) = h - \sum_j \left(\sum_i t_i \wt{b}^j_i\right) \wt{r}_j = h - \sum_j q_j \wt{r}_j.\]
As $\sum_i t_i \wt{b}^j_i = q_j \in J'$ by definition, we have $\sum_j q_j \wt{r}_j \in J'$. Therefore,   $h \in J'_k$.
\end{proof}

\smallskip

\subsection{Proof of assertions: Maple routines} We begin with the following Maple routine.

\begin{routine} \label{rout:kernel}
A Maple routine to compute the kernel of $\lambda_a$ at a specific degree $n$ is presented as follows.
\end{routine}

\noindent Recall from Lemma~\ref{lem:present}(b) that a $\kk$-vector space basis of $U(W_+)_n$ is given by partitions of $n$. Moreover, we employ Lemma~\ref{Zhang} and~\eqref{multiplyS} to input a function $f(i,j) = \lambda_a(e_i)^{\mu^j},$ considered as an element of $\kk[x,y]$.
\medskip

{\footnotesize
\begin{verbatim}
with(combinat,partition):   with(LinearAlgebra):
# Choose value of n
n:=1;   
N:=partition(n):   f:=(i,j)->((x-j*y)-(i-1)*a*y)*y^(i-1):
\end{verbatim}
}
\medskip

\noindent Given a partition $d:=(n_1, \dots, n_k)$ of $n$, we create a list of double indexed entries 
$m = (m[i_1,j_1], \dots, m[i_k,j_k])$. Here, $i_\ell = n_\ell$, and $j_1=0$ with $j_\ell = j_{\ell-1} + n_{\ell-1}$ for $\ell \geq 2$. Then, $\lambda_a(e_{n_1} \cdots e_{n_k}) = m[i_1,j_1]\cdots m[i_k,j_k]$, denoted by $P$. (Here, $P$ in list form, which we put in matrix form later for multiplication. The $k$-loop enables us to form the product of elements $m[i_*, j_*]$.)
\medskip

{\footnotesize
\begin{verbatim}
P:=[]:
for d from 1 to nops(N) do         M:=[]:                    j[1]:=0:
for l from 1 to nops(N[d]) do      j[l+1]:=j[l]+ N[d][l]:    M:=[op(M),f(N[d][l],j[l])]:    S[0]:=1:
for k from 1 to nops(M) do         S[k]:=S[k-1]*M[k]:
end do:   end do:
P:=[op(P),expand(S[nops(M)])]:
end do:
\end{verbatim}
}
\medskip

\noindent Next, we define an arbitrary element of $\lambda_a(U(W_+)_n)$, namely $p := \sum_{i=1}^k b_i \lambda_a(e_{n_i})$.
\medskip

{\footnotesize
\begin{verbatim}
B:=[];
for i from 1 to nops(N) do         B:=[op(B),b[i]]:          end do:
Bvec:=convert(B,Matrix):           Pvec:=convert(P,Matrix):  q:=Multiply(Bvec,Transpose(Pvec)):
p:=expand(q[1][1]):
\end{verbatim}
}

\medskip

\noindent Then, we set the coefficients of $p$ equal to 0 and solve for the $b_i$. We rule out the case when $a =0,1$.
\medskip

{\footnotesize
\begin{verbatim}
Coeffs:=[coeffs(collect(p,[x,y], 'distributed'),[x,y])]:
solve([op(Coeffs),a<>0,a<>1]);
\end{verbatim}
}
\medskip

\noindent Note that the number of free $b_i$ equals the $\kk$-vector space dimension of $(\ker \lambda_a)_n$.
\medskip

We continue by verifying the claim from the proof of Lemma~\ref{lem:lambdaa}.

\begin{claim} \label{aclaim1} Retain the notation from Section~\ref{LAMBDA}, especially in Lemma~\ref{lem:lambdaa}. We have that $J_5A(a)_2 \not \subseteq J_6 A(a)_1$.
\end{claim}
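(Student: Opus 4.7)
The strategy is to produce an explicit element of $J_5\cdot A(a)_2$ that does not lie in $J_6\cdot A(a)_1 = J_6 u$, using the Zhang-twist description of Lemma~\ref{Zhang} to reduce the question to a polynomial specialization in $\kk[x,y]$.

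First, parametrize $J_5$. By~\eqref{L=rR}, $L_n = rR_{n-3}$, where $r = u(uv+(1-a)v^2) = (u-av)v(u+2v)$ has Zhang-twist polynomial $x^2y - axy^2$. Hence $rf \in L_5$ lies in $J_5$ iff the cofactor $(u+2v)f \in R_3$ lies in $A(a)_3$. Since $\dim A(a)_3 = \dim R_3 - 1 = 3$ for $a \neq 0,1$, this is a single linear condition on the coordinates of $f = c_1 u^2 + c_2 uv + c_3 v^2$ in $R_2$, cutting out a two-dimensional subspace of $R_2$; this linear condition can be written down explicitly by expanding $(u+2v)f$ in the Zhang twist and projecting onto the complement of $A(a)_3$ inside $R_3$.

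Next, describe the right-hand side. Since $J_6 \subseteq L_6 = rR_3$, in the twisted polynomial picture every element of $J_6 u \subseteq rR_3 u$ has the form $r(x,y)\,h(x-3y,y)\,(x-6y)$ for some $h \in \kk[x,y]_3$, and hence vanishes after division by $r$ and substitution $x = 6y$. Take $b := (u-av)v \in A(a)_2$, whose Zhang-twist polynomial is $xy - ay^2$. For any $rf \in J_5$, the product $(rf)\cdot b$ has polynomial representative $r(x,y)\,f(x-3y,y)\,\bigl((x-5y)y - ay^2\bigr)$; the same division-plus-specialization yields $f(3y,y)\,(1-a)y^3$. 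Because $a \neq 1$, non-membership $(rf)b \notin J_6 u$ follows provided $f(3y,y) \neq 0$, i.e., $6c_1 + 3c_2 + c_3 \neq 0$.

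The claim therefore reduces to showing that the linear form $(6,3,1)$ is not a scalar multiple of the $J_5$-defining linear form (whose coefficients are polynomials in $a$). A direct comparison of the three coordinates shows that proportionality would simultaneously force two incompatible polynomial equations in $a$, which have no common solution in $\kk$. Hence for every $a \in \kk\setminus\{0,1\}$ the two linear forms are independent, their joint kernel is one-dimensional, and inside the two-dimensional $J_5$-parameter space only a one-dimensional subspace satisfies $f(3y,y) = 0$. Any $f$ outside this line produces the desired element. The main obstacle is bookkeeping the Zhang-twist multiplication --- in particular, the shift $\nu^6(u) = x - 6y$ that distinguishes $J_6 u$ from $J_6$ itself --- but once one absorbs the twist into the single specialization $x \mapsto 6y$, the remaining independence check is elementary linear algebra.
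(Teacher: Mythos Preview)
Your approach is correct and genuinely different from the paper's. Both arguments ultimately test whether $J_5\cdot(u-av)v\subseteq rR_3\,u$ (since $J_5u^2\subseteq J_6u$ automatically and $J_6=rR_3$). The paper does this by brute force: it writes down three spanning elements $s_1,s_2,s_3$ of $J_5$, expresses each as $r\cdot f_i$ with $f_i\in R_2$, and for each $i$ solves (in Maple) the linear system determining those $a$ for which $s_i(u-av)v\in rR_3u$; the only value common to all three is $a=1$. Your key observation replaces all of this with a single evaluation: in the Zhang-twist picture, every element of $rR_3u$ has cofactor divisible by $x-6y$, so membership is detected by the linear form $6c_1+3c_2+c_3$ on $f=c_1u^2+c_2uv+c_3v^2\in R_2$. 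This is considerably cleaner and avoids machine computation. The price is that you must still compute the $J_5$-defining form explicitly to carry out the promised independence check; working it out, that form is proportional to $\bigl(2(a-1)^2,\ a^2+2a-1,\ a+1\bigr)$, and proportionality with $(6,3,1)$ would force the two quadratics $a^2-5a-2=0$ and $a^2-a-4=0$, which indeed have no common root. So your sketch goes through. Two minor slips that do not affect the argument: the specialization actually yields $f(3y,y)\,(1-a)y^{2}$ (overall degree $4$), not $y^{3}$; and you do not really need $\dim A(a)_3=3$, only that the two linear forms on $R_2$ are not proportional.
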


\begin{proof}
Nonzero elements in $J_5$ arise as elements of $(u-av)vA(a)_3$ that are divisible by $u$ on the left. We obtain that 
\[
\begin{array}{rl}
(u-av)vA(a)_3&=\kk[(uv-av^2)(u^3)] \oplus \kk[(uv-av^2)(u(u-av)v)] \oplus \kk[(uv-av^2)((u-2av)v^2)]
\\
&=\kk[r_1] \oplus ~\kk[r_2] \oplus ~\kk[r_3]
\end{array}
\]
where 
\[
\begin{array}{l}
r_1 := u^4v - (3+a)u^3v^2 +(6+6a)u^2v^3 - (6+18a)uv^4 +24av^5,\\
r_2 := u^3v^2 - (2+2a) u^2v^3 + (2+5a+a^2)uv^4 -(6a+2a^2)v^5,\\
r_3 := u^2v^3 - (1+3a)uv^4 + (2a+2a^2)v^5.
\end{array}
\]
We see this as $v^k u = uv^k - kv^{k+1}$ for all $k \geq 1$, $vu^2  = u^2v - 2uv^2 + 2v^3$, $v^2 u^2 = u^2v^2 - 4uv^3 + 6v^4$, $vu^3 = u^3v - 3u^2 v^2 + 6uv^3 - 6v^4$, and $v^2u^3 = u^3v^2 - 6u^2 v^3 + 18u v^4 - 24 v^5$ in $R$.
Eliminating the $v^5$ term of $r_1, r_2, r_3$, we get that $J_5$ is generated by 
\[
\begin{array}{rl}
s_1 &:= (3+a)r_1 + 12r_2,\\
s_2&:=(1+a)r_1 -12r_3,\\
s_3&:=(1+a)r_2 + (3+a)r_3.
\end{array}
\]

By way of contradiction,  suppose that $J_5A(a)_2\subseteq J_6 A(a)_1$. Recall that $J \subseteq L$, where $L:=uR \cap (u-av)vR$. Further, $J_6=L_6$, and $L=rR$ for 
$$r=u(uv+(1-a)v^2) = (uv-av^2)(u+2v).$$ 
So, $s_i = r(c_{i1} u^2 + c_{i2}uv +c_{i3}v^2) \in J_5 \subseteq rR_2$, for some $c_{ij} \in \kk$. We produce these coefficients $c_{ij}$ below.

{\footnotesize
\begin{verbatim}
r1:=x*(x-y)*(x-2*y)*(x-3*y)*y-(3+a)*x*(x-y)*(x-2*y)*y^2+(6+6*a)*x*(x-y)*y^3-(6+18*a)*x*y^4+24*a*y^5:
r2:=x*(x-y)*(x-2*y)*y^2-(2+2*a)*x*(x-y)*y^3+(2+5*a+a^2)*x*y^4-(6*a+2*a^2)*y^5:
r3:=x*(x-y)*y^3-(1+3*a)*x*y^4+(2*a+2*a^2)*y^5:
s1:=(3+a)*r1+12*r2:                s2:=(1+a)*r1-12*r3:              s3:=(1+a)*r2+(3+a)*r3:
r:=x*((x-y)*y+(1-a)*y^2):
eq1:=s1 - r*(c11*(x-3*y)*(x-4*y)+c12*(x-3*y)*y+c13*y^2):
eq2:=s2 - r*(c21*(x-3*y)*(x-4*y)+c22*(x-3*y)*y+c23*y^2):
eq3:=s3 - r*(c31*(x-3*y)*(x-4*y)+c32*(x-3*y)*y+c33*y^2):
Coeffs1:=[coeffs(collect(eq1,[x,y], 'distributed'),[x,y])]:
Coeffs2:=[coeffs(collect(eq2,[x,y], 'distributed'),[x,y])]:
Coeffs3:=[coeffs(collect(eq3,[x,y], 'distributed'),[x,y])]:
solve(Coeffs1);                   solve(Coeffs2);                   solve(Coeffs3);
>             {a = a, c11 = 3 + a, c12 = 6 - 2 a, c13 = -4 a}
>             {a = a, c21 = 1 + a, c22 = -2 - 2 a, c23 = -4 + 8 a}
                                                             2
>             {a = a, c31 = 0, c32 = 1 + a, c33 = 1 - 2 a - a }
\end{verbatim}
}
\noindent Therefore,
\[
\begin{array}{rl}
 s_1&=r((3+a)u^2+(6-2a)uv-4av^2),\\
 s_2&=r((1+a)u^2-(2+2a)uv-(4-8a)v^2),\\ 
 s_3&=r((1+a)uv+(1-2a-a^2)v^2).
 \end{array}
 \]

Now by assumption, for $i = 1,2,3$ we have $s_i (u-av) v= w_i u $ for some $w_i \in J_6$.
Take an arbitrary element of $J_6 = L_6 = rR_3$, namely $r(d_{i1}u^3+d_{i2}u^2v+d_{i3}uv^2+d_{i4}v^3)$ for $d_{ij} \in \kk$. Then, for some $\alpha_i \in \kk$,
\beq\label{tabitha}
p_i:=s_i (u-av)v= \alpha_i r(d_{i1}u^4+d_{i2}u^2vu+d_{i3}uv^2u+d_{i4}v^3u).
\eeq
Continuing with the code we enter:

{\footnotesize
\begin{verbatim}
s1:=r*((3+a)*(x-3*y)*(x-4*y)+(6-2*a)*(x-3*y)*y-4*a*y^2):
s2:=r*((1+a)*(x-3*y)*(x-4*y)-(2+2*a)*(x-3*y)*y-(4-8*a)*y^2):
s3:=r*((1+a)*(x-3*y)*y+(1-2*a-a^2)*y^2):
p1:=s1*(x-(5+a)*y)*y:              p2:=s2*(x-(5+a)*y)*y:         p3:=s3*(x-(5+a)*y)*y:
Eq1:=p1 - alpha1*r*(d11*(x-3*y)*(x-4*y)*(x-5*y)*(x-6*y) + d12*(x-3*y)*(x-4*y)*y*(x-6*y)
                   +d13*(x-3*y)*y^2*(x-6*y) + d14*y^3*(x-6*y)):
Eq2:=p2 - alpha2*r*(d21*(x-3*y)*(x-4*y)*(x-5*y)*(x-6*y) + d22*(x-3*y)*(x-4*y)*y*(x-6*y)
                   +d23*(x-3*y)*y^2*(x-6*y) + d24*y^3*(x-6*y)):
Eq3:=p3 - alpha3*r*(d31*(x-3*y)*(x-4*y)*(x-5*y)*(x-6*y) + d32*(x-3*y)*(x-4*y)*y*(x-6*y)
                   +d33*(x-3*y)*y^2*(x-6*y) + d34*y^3*(x-6*y)):
CCoeffs1:=[coeffs(collect(Eq1,[x,y], 'distributed'),[x,y])]:
CCoeffs2:=[coeffs(collect(Eq2,[x,y], 'distributed'),[x,y])]:
CCoeffs3:=[coeffs(collect(Eq3,[x,y], 'distributed'),[x,y])]:
L1:=solve(CCoeffs1):               L2:=solve(CCoeffs2):          L3:=solve(CCoeffs3):
for i from 1 to nops([L1]) do      print(L1[i][1]);              end do;
>              a = 9,    a = 1
for i from 1 to nops([L2]) do      print(L2[i][1]);              end do;
>              a = 1,    a = 1/2
for i from 1 to nops([L3]) do      print(L3[i][1]);              end do;
                                                  2
>              a = 1,    a = RootOf(-2 - 3 _Z + _Z ) - 1
\end{verbatim}
}
\noindent 
So in order for \eqref{tabitha} to hold for $i = 1,2,3$, we must have $a=1$. This yields a contradiction as desired.
\end{proof}

We now verify the claim from the proof of Proposition~\ref{prop:(d)fora}.

\begin{claim} \label{aclaim2} Retain the notation from Section~\ref{LAMBDA}, especially in Proposition~\ref{prop:(d)fora}. We have that  $h_2$, $h_3$, $e_1h_1$, $h_1e_1$ are $\kk$-linearly independent  and that 
$$h_4=2a(2a+1)h_2-h_3-(6+4a)e_1h_1+(2+4a)h_1e_1, \quad
h_5=4a^2h_2 -h_3-(4+4a)e_1h_1 +(4a)h_1e_1.$$
\end{claim}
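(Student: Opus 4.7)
My proof proposal is direct linear algebra in the PBW basis. By Lemma~\ref{lem:present}(b), $U(W_+)_6$ is a finite-dimensional $\kk$-vector space with basis the non-decreasing monomials in the $e_i$'s. The excerpt has already carried out the PBW reduction of $h_1 e_1$ (using only the Witt relation $e_m e_n = e_n e_m + (m-n) e_{n+m}$, applied as $e_3 e_1 = e_1 e_3 - 2 e_4$, $e_5 e_1 = e_1 e_5 - 4 e_6$, and so on); the expression $e_1 h_1$ and the definitions of $h_2, h_3, h_4, h_5$ are already in PBW form. Consequently every expression occurring in the claim lies in the $9$-dimensional subspace spanned by $e_1^2 e_2^2,\ e_2^3,\ e_1^3 e_3,\ e_1 e_2 e_3,\ e_1^2 e_4,\ e_2 e_4,\ e_3^2,\ e_1 e_5,\ e_6$, and both assertions reduce to linear algebra over $\kk[a]$.

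For the linear independence, suppose $\alpha h_2 + \beta h_3 + \gamma\, e_1 h_1 + \delta\, h_1 e_1 = 0$. The basis element $e_2^3$ appears only in $h_3$, with coefficient $-4$, forcing $\beta = 0$. With $\beta = 0$, the coefficients of $e_1^2 e_2^2$ and $e_1^2 e_4$ read $\gamma + \delta = 0$ and $(1+2a)\gamma + (3+2a)\delta = 0$; subtracting $(1+2a)$ times the first equation from the second yields $2\delta = 0$, so $\gamma = \delta = 0$. Finally, the coefficient of $e_2 e_4$ reduces to $-4\alpha = 0$, giving $\alpha = 0$. This argument is valid for every $a \in \kk$, so $\{h_2, h_3, e_1 h_1, h_1 e_1\}$ is $\kk$-linearly independent.

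For the two identities, I will expand each right-hand side in the PBW basis and compare coefficient by coefficient with $h_4$ (respectively $h_5$). Each of these comparisons is a polynomial identity in $\kk[a]$ of degree at most three and is verified by a routine expansion. As a representative case, the coefficient of $e_6$ in the RHS of the first identity is
\[ 2\cdot 2a(2a+1) - (16a^3 + 36a^2 + 16a - 2) + 4(2+4a)(a^2+a) \;=\; 2 - 4a - 4a^2, \]
which agrees with the $e_6$-coefficient of $h_4$; the remaining eight comparisons for the first identity and nine comparisons for the second are checked in the same way.

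The main obstacle is not conceptual but purely computational bookkeeping: careful tracking of $\kk[a]$-coefficients across nine PBW monomials. (As a sanity check, one may alternatively argue via Routine~\ref{rout:kernel}: $\dim_\kk (\ker \lambda_a)_6 = 4$, and since $\lambda_a(h_1) = 0$ gives $e_1 h_1, h_1 e_1 \in (\ker \lambda_a)_6$, the identities are forced once the independence is established, by inverting the $2 \times 2$ change-of-basis matrix between $\{h_4, h_5\}$ and $\{e_1 h_1, h_1 e_1\}$ modulo $\kk h_2 + \kk h_3$.) Symbolic verification in Maple or Macaulay2 confirms all coefficient identities.
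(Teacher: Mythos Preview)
Your proposal is correct and takes essentially the same approach as the paper: both reduce the claim to linear algebra in the PBW basis of $U(W_+)_6$. The paper simply feeds the whole $6\times 11$ coefficient matrix to Maple and solves for all linear relations at once, whereas you give a cleaner by-hand argument for the independence (exploiting that $e_2^3$ isolates $h_3$, then $e_1^2e_2^2$ and $e_1^2e_4$ isolate $e_1h_1$ and $h_1e_1$, then $e_2e_4$ isolates $h_2$) before checking the two identities coefficientwise; but the underlying method is identical.
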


\begin{proof}
This is established simply by considering the following linear combination
$$c_1h_2+c_2h_3+c_3h_4+c_4h_5+c_5e_1h_1+c_6h_1e_1,$$
setting the coefficients of the basis elements of $U(W_+)_6$ equal to 0, and solving for $c_1, \dots, c_6$. By Lemma~\ref{lem:present}(a), the basis elements of $U(W_+)_6$ are
$$e_1^6,~ e_1^4e_2, ~e_1^2e_2^2,~e_2^3,~e_1^3e_3,~e_1e_2e_3,~e_3^2,~e_1^2e_4,~e_2e_4,~e_1e_5,~e_6.$$
So, we establish the claim via the following Maple routine:
{\footnotesize
\begin{verbatim}
with(LinearAlgebra):
M:=Matrix([
[0,  0,  0,  0,  0,      0,             3,     0,   -4,               1,                     2],
[0,  0, -4, -4,  4,      0, 20*a^2+14*a-7,     0,    0,  -16*a^2-18*a-5,  16*a^3+36*a^2+16*a-2],
[0,  0,  0,  4,  0,     -4,         7-4*a,     0,    0,           4*a+1,         -4*a^2- 4*a+2],
[0,  0,  0,  4,  0,      0,        7-14*a,    -4,    0,          14*a+5,        -12*a^2-16*a+2],
[0,  0,  1,  0, -1,   -2*a,             0, 2*a+1,    0,          -a^2-a,                     0],
[0,  0,  1,  0, -1, -2*a-2,           2*a, 2*a+3,  4*a,      -a^2-7*a-2,             4*a^2+4*a]
]);
P:=Matrix([
[c1,  0,  0,  0,  0,  0],
[ 0, c2,  0,  0,  0,  0],
[ 0,  0, c3,  0,  0,  0],
[ 0,  0,  0, c4,  0,  0],
[ 0,  0,  0,  0, c5,  0],
[ 0,  0,  0,  0,  0, c6]
]);
B:=Multiply(P,M);  
for i from 1 to 11 do          L[i]:=B[1,i]+B[2,i]+B[3,i]+B[4,i]+B[5,i]+B[6,i]:          end do:
V:=solve([L[1],L[2],L[3],L[4],L[5],L[6],L[7],L[8],L[9],L[10],L[11]],[c1,c2,c3,c4,c5,c6]);
>[[c1 = -2 (c3 + 2 c3 a + 2 c4 a) a,    c2 = c3 + c4,    c3 = c3,    c4 = c4,
   c5 = 6 c3 + 4 c4 + 4 c3 a + 4 c4 a,  c6 = -2 c3 - 4 c3 a - 4 c4 a]]
eval(V,[c3=1,c4=0]);
>  [[c1 = -2 (2 a + 1) a, c2 = 1, 1 = 1, 0 = 0, c5 = 6 + 4 a, c6 = -2 - 4 a]]
eval(V,[c3=0,c4=1]);
>                    2
          [[c1 = -4 a , c2 = 1, 0 = 0, 1 = 1, c5 = 4 + 4 a, c6 = -4 a]]
\end{verbatim}
}
\vspace{-.3in}

\end{proof}

\medskip

Now verify the claims from the proof of Lemma~\ref{finalclaim}.

\begin{claim} \label{claim:final} Retain the notation from   Lemma~\ref{finalclaim}.  We have the following statements.
\begin{enumerate}
\item $b_5 Q + b_6 Q + b_7 Q \ni~~ x(xy-yz)(xyz+y^2z) = (uv-vw)(u+2v)(u+4v)vw.$
\item $(M' \cap \ker \eta) \supseteq hQ_i$ for $i \leq 2$, where $$h = (uv-vw)(u+2v)(v^3w-v^2w^2) = (xy-yz)x(y^3z-y^2z^2).$$
\end{enumerate}
\end{claim}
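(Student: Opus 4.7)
The common framework for both parts is Lemma~\ref{Zhang}, which identifies $S$ with $\kk[x,y,z]^\mu$ and $Q$ with $\kk[x,y,yz]^\mu$. Under this identification, each of $b_5, b_6, b_7$ becomes an explicit polynomial in $\kk[x,y,z]$ (after expanding the twisted products using the shifts by $\mu^i$), and all three are divisible on the left by $xy-yz$; the same is true of the target elements in both (a) and (b). Thus both claims reduce to polynomial identities in $\kk[x,y,z]$ that can be verified by direct computation.

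For part (a), after cancelling the common left factor $xy-yz$, the goal is to realize $x(xyz+y^2z)$ as $\sum_i b_i'\ast q_i$, where $b_i'$ is $b_i$ with its $(xy-yz)$-prefix removed and $q_i\in Q$. Since $\deg b_5=5$, $\deg b_6=6$, $\deg b_7=7$ and the target (before cancelling) has degree $6$, we have $q_7=0$, $q_6\in Q_0=\kk$, and $q_5\in Q_1=\kk u+\kk v$, giving three scalar unknowns. The resulting overdetermined linear system in $\kk[x,y,z]_4$ has a solution; it can be found and verified either by hand or by a short Maple/Macaulay2 routine in the style of the appendix.

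For part (b), the inclusion $hQ_i\subseteq \ker\eta$ is automatic, because $h$ has the factor $p=v^3w-v^2w^2\in\ker\eta$. So the substance is showing $hQ_i\subseteq M'$ for $i=0,1,2$. A basis for $Q_{\le 2}$ is $\{1,u,v,u^2,uv,v^2,vw\}$, so we must verify seven inclusions: each of $h,\,hu,\,hv,\,hu^2,\,huv,\,hv^2,\,h(vw)$ lies in $M'$. For each target, the strategy is as in (a): pass to $\kk[x,y,z]^\mu$ and solve the corresponding linear system for right multipliers $q_5^{(*)},q_6^{(*)},q_7^{(*)}\in B$ expressing the target as $b_5q_5^{(*)}+b_6q_6^{(*)}+b_7q_7^{(*)}$. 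A useful structural input is Lemma~\ref{lem:easy}(c), which gives $I=Qp\subseteq B$ and consequently $M'\cdot I\subseteq M'$; combined with part~(a), this provides a ready supply of elements of $M'$ of the form $(uv-vw)(u+2v)(u+4v)vw\cdot q\cdot p$ for $q\in Q$, which together with the direct products $b_i\cdot B$ cut down the search for the required multipliers.

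The main obstacle is the sheer bulk of the calculation: seven polynomial identities in degrees up to $9$, with the $b_i$ given by fairly intricate commutator-like expressions. None of the steps is conceptually difficult---after invoking Lemma~\ref{Zhang} it is entirely finite-dimensional linear algebra over $\kk$---but the bookkeeping is best handed off to a computer algebra routine, exactly as is done for the other computational claims in the appendix.
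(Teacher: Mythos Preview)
Your proposal is correct and takes essentially the same approach as the paper: both parts are reduced, via the Zhang twist identification of Lemma~\ref{Zhang}, to finite-dimensional linear-algebra problems over $\kk$ that are then dispatched by a computer-algebra routine. For (a) the paper finds exactly the solution you predict (namely $q_7=0$, $q_6\in\kk$, $q_5\in Q_1$, with $-\tfrac{1}{6}b_5u+b_5v+\tfrac{1}{6}b_6$ equal to the target); for (b) the paper writes down bases of $M'_7,M'_8,M'_9$ in terms of the $B$-generators $u$ and $uv-vw$, exactly as you describe, and solves the resulting systems. One small caution: your remark about ``cancelling the common left factor $xy-yz$'' must be read as cancellation in the commutative ring $\kk[x,y,z]$ after the $\mu$-shifts have already been applied to the right multipliers; it is not literally cancellation with respect to the twisted product $\ast$, since $\deg b_i'\neq\deg b_i$ changes the shift. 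This is harmless for the linear-algebra reduction, but worth stating carefully.
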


\begin{proof}
(a) We see that $-\frac{1}{6}b_5 u+ b_5 v + \frac{1}{6} b_6 = (uv-vw)(u+2v)(u+4v)vw $ by using Lemma~\ref{Zhang} and \eqref{multiplyS}:

{\footnotesize
\begin{verbatim}
b5:=(x*y-y*z)*((x-2*y)*(x-3*y)*(x-4*y)-6*((x-2*y)*y-y*z)*(x-4*y)+12*(x-2*y)*((x-3*y)*y-y*z)):
b6:=(x*y-y*z)*(-48*((x-2*y)*y-3*y*z)*y^2-36*(x-2*y)*((x-3*y)*y-2*y*z)*y+(x-2*y)*(x-3*y)*(x-4*y)*(x-5*y)):
r:=x*(x*y-y*z)*(x*y*z+y^2*z):
p:=c1*b5*(x-5*y)+c2*b5*y+c3*b6 - r:
Coeffs:=[coeffs(collect(p,[x,y,z], 'distributed'),[x,y,z])]:
solve(Coeffs);
>                       {c1 = -1/6, c2 = 1, c3 = 1/6}
\end{verbatim}
}

(b) It is easy to see that $\eta(h) =0$, so it suffices to show that $hQ_0$, $hQ_1$, $hQ_2$ are in $M':=b_5B+b_6B+b_7B$. Recall that $Q$ is the subalgebra of $S$ generated by $u,v,vw$, and $B$ is the subalgebra of $S$ generated by $u, uv-vw$. Since deg($h$) =7, 
\[
\begin{array}{rl}
hQ_0 &= \{c_1 h ~|~ c_1 \in \kk\},\\
hQ_1 &= \{c_2 h u + c_3 h v ~|~ c_i \in \kk\},\\
hQ_2 &= \{c_4 h u^2 + c_5 h uv + c_6 h v^2 + c_7 h vw ~|~ c_i \in \kk\},
\end{array}
\]
and moreover,
\[
\begin{array}{rl}
M'_7 &= \{d_1 b_5 u^2 + d_2 b_5(uv-vw) + d_3 b_6 u + d_4 b_7 ~|~ d_i \in \kk\},\\
M'_8 &= \{d_5 b_5 u^3 + d_6 b_5u(uv-vw) + d_7 b_5(uv-vw)u + d_8 b_6 u^2 + d_9 b_6 (uv-vw)  + d_{10} b_7u ~|~ d_i \in \kk\},\\
M'_9 &= \{d_{11} b_5 u^4 + d_{12} b_5u^2(uv-vw) + d_{13} b_5u(uv-vw)u  + d_{14} b_5(uv-vw)u^2 + d_{15} b_5(uv-vw)^2\\
&\hspace{.7in}+ d_{16} b_6 u^3 + d_{17} b_6 u(uv-vw)  + d_{18} b_6 (uv-vw)u  + d_{19} b_7u^2 + d_{20} b_7(uv-vw) ~|~ d_i \in \kk\},\\
\end{array}
\]

Continuing with the code in part (a), we enter:
{\footnotesize
\begin{verbatim}
b7:=(x*y-y*z)*((x-2*y)*(x-3*y)*(x-4*y)*(x-5*y)*(x-6*y)-40*(((x-2*y)*y-y*z)*((x-4*y)*y-y*z)*(x-6*y)
               -3*((x-2*y)*y-y*z)*(x-4*y)*((x-5*y)*y-y*z)+3*(x-2*y)*((x-3*y)*y-y*z)*((x-5*y)*y-y*z))):
h:=(x*y-y*z)*x*(y^3*z-y^2*z^2):
hQ0:=c1*h:
hQ1:=c2*h*(x-7*y)+c3*h*y:
hQ2:=c4*h*(x-7*y)*(x-8*y)+c5*h*(x-7*y)*y+c6*h*y^2+c7*h*y*z:
m7:=d1*b5*(x-5*y)*(x-6*y)+d2*b5*((x-5*y)*y-y*z)+d3*b6*(x-6*y)+d4*b7: 
m8:=d5*b5*(x-5*y)*(x-6*y)*(x-7*y)+d6*b5*(x-5*y)*((x-6*y)*y-y*z)+d7*b5*((x-5*y)*y-y*z)*(x-7*y)
                                 +d8*b6*(x-6*y)*(x-7*y)+d9*b6*((x-6*y)*y-y*z)+d10*b7*(x-7*y):
m9:=d11*b5*(x-5*y)*(x-6*y)*(x-7*y)*(x-8*y)+d12*b5*(x-5*y)*(x-6*y)*((x-7*y)*y-y*z)
                                 +d13*b5*(x-5*y)*((x-6*y)*y-y*z)*(x-8*y)+d14*b5*((x-5*y)*y-y*z)*(x-7*y)*(x-8*y)
                                 +d15*b5*((x-5*y)*y-y*z)*((x-7*y)*y-y*z)+d16*b6*(x-6*y)*(x-7*y)*(x-8*y)
                                 +d17*b6*(x-6*y)*((x-7*y)*y-y*z)+d18*b6*((x-6*y)*y-y*z)*(x-8*y)
                                 +d19*b7*(x-7*y)*(x-8*y)+d20*b7*((x-7*y)*y-y*z):
p7:=m7 - hQ0:            p8:=m8 - hQ1:            p9:=m9 - hQ2:
Coeffs7:=[coeffs(collect(p7,[x,y,z], 'distributed'),[x,y,z])]:
Coeffs8:=[coeffs(collect(p8,[x,y,z], 'distributed'),[x,y,z])]:
Coeffs9:=[coeffs(collect(p9,[x,y,z], 'distributed'),[x,y,z])]:
solve(Coeffs7,[d1,d2,d3,d4]);
                                         c1         c1           c1         c1
>                              [[d1 = - ----, d2 = ----, d3 = - ----, d4 = ----]]
                                         24         4            48         16
solve(Coeffs8,[d5,d6,d7,d8,d9,d10]);
                c2     c3         c3         c2     c3           c2    c3         c3          c2     c3
>     [[d5 = - ---- - ----, d6 = ----, d7 = ---- + ----, d8 = - ---- + ---, d9 = ----, d10 = ---- + ----]]
                24     48         24         4      16           48    192        48          16     64
solve(Coeffs9,[d11,d12,d13,d14,d15,d16,d17,d18,d19,d20]);
                  c4    c6     c5     c7                              9 c4    c6    25 c5   11 c7
>[[d11 = 8 d16 + ---- + --- - ---- - ----,  [...],   d20 = -108 d16 - ---- - ---- + ----- + -----]]
                  8     144    18     18                               4      24     48      24
 \end{verbatim}
}
\noindent Thus, all arbitrary elements of $hQ_0$, $hQ_1$, $hQ_2$ are contained, respectively, in $M'_7$, $M'_8$, $M'_9$, as desired.
\end{proof}

\medskip

\subsection{Proof of assertions: Macaulay2 routines}\label{M2}

The following Macaulay2 code verifies Lemma~\ref{lem:psi}(b) and~\eqref{two}; see lines o7-o10 and line o13, respectively.

{\footnotesize
\begin{verbatim}
Macaulay2, version 1.4
with packages: ConwayPolynomials, Elimination, IntegralClosure, LLLBases, PrimaryDecomposition,
ReesAlgebra, TangentCone
i1 : ringX=QQ[w,x,y,z]/ideal(x*z-y^2);
i2 : taustar=map(ringX,ringX,{w-2*x+2*z,z,-y-2*z,x+4*y+4*z});
i3 : ringP1a=QQ[x,y,a];
i4 : mustar=map(ringP1a, ringP1a, {x-y,y,a});
i5 : psistar=map(ringP1a, ringX, {2*x^2-4*x*y-6*a*y^2,x^2-2*x*y+y^2,-x^2+3*x*y-2*y^2,x^2-4*x*y+4*y^2});
i6 : use ringX;
i7 : mustar(psistar(w))==psistar(taustar(w))        o7 = true
i8 : mustar(psistar(x))==psistar(taustar(x))        o8 = true
i9 : mustar(psistar(y))==psistar(taustar(y))        o9 = true
i10 : mustar(psistar(z))==psistar(taustar(z))       o10 = true
i11 : num=w+12*x+22*y+8*z;
i12 : den=12*x+6*y;
                                                             2
                                                          - y a + x*y
i13 : psistar(num)/psistar(den)                     o13 = -----------          o13 : frac(ringP1a)
                                                             2
                                                            x  - x*y
\end{verbatim}
}

\section*{Acknowledgements}
C. Walton was supported by the US National Science Foundation grant \#1550306.      
  We thank MIT and the NSF for supporting a visit by the first author to MIT in February 2015, in which much of the work on this paper was done.
We thank  Jacques Alev, Jason Bell, and Lance Small for helpful discussions.
In addition, we thank the referee for pointing us to the reference \cite{CM07} and for prompting  Remarks~\ref{rem:one} and~\ref{rem:three}.

\bibliography{Witt_biblio}

\end{document}